\documentclass[twoside,a4paper,12pt,centertags]{amsart}

\usepackage[T1]{fontenc}
\usepackage[utf8]{inputenc}
\usepackage{subfiles}
\usepackage{multicol}

\makeatletter
\usepackage[section]{placeins}

\AtBeginDocument{%
	\expandafter\renewcommand\expandafter\subsection\expandafter{%
		\expandafter\@fb@secFB\subsection
	}%
}
\makeatother
\makeatletter
\@namedef{subjclassname@2020}{%
	\textup{2020} Mathematics Subject Classification}
\makeatother

\usepackage{amsfonts,amssymb,amsmath,amsthm}
\usepackage{mathrsfs}
\usepackage{nicefrac}
\usepackage{latexsym}
\usepackage{mathtools}
\usepackage[noadjust]{cite}
\usepackage{paralist}
\usepackage{longtable}
\usepackage{float}
\usepackage{pdfsync}
\usepackage[dvipsnames]{xcolor}
\usepackage{tikz,tikz-cd}
\usetikzlibrary{patterns}
\usepackage{hhline}
\usepackage{import}
\usepackage{esint}
\usepackage{dsfont}

\usepackage{enumitem}

\usepackage[colorlinks=true,citecolor=magenta,linkcolor=cyan,backref]{hyperref}
\AtBeginDocument{}

\usepackage[
  a4paper,
  textwidth=6in,
  textheight=9in,
  centering
]{geometry}

\numberwithin{equation}{section}

\theoremstyle{plain}
\newtheorem{thm}{Theorem}[section]
\newtheorem{lem}[thm]{Lemma}

\newtheorem{prop}[thm]{Proposition}

\theoremstyle{definition}
\newtheorem{defn}[thm]{Definition}
\newtheorem{rem}[thm]{Remark}

\usepackage{color}

\DeclareMathOperator*{\esssup}{ess\,sup}

\newcommand{\IC}{\mathbb{C}}

\newcommand{\R}{\mathbb{R}}

\newcommand{\cA}{\mathcal{A}}

\newcommand{\cD}{\mathcal{D}} 

\newcommand{\cJ}{\mathcal{J}}
\newcommand{\cK}{\mathcal{K}} 
\newcommand{\cL}{\mathcal{L}}

\renewcommand{\L}{\operatorname{L}} 
\newcommand{\C}{\operatorname{C}} 
\renewcommand{\H}{\operatorname{H}} 
\newcommand{\W}{\operatorname{W}}
\DeclareRobustCommand{\Hdot}{\dot{\H}\protect{\vphantom{H}}
} 
 
\renewcommand\Re{\operatorname{Re}}

\def\angle#1#2{\langle #1,#2 \rangle} 

\def\tpsi{\tilde\psi}

\def\v{\phi}

\newcommand{\norm}[1]{\| {#1}\|}
\newcommand{\HZK}{\mathcal K}

\newcommand{\dd}{\, \mathrm{d} }

\title[Fundamental solutions to non-local kinetic equations]{Existence and upper bounds of fundamental solutions to kinetic equations with rough non-local diffusion}
\author{Pascal Auscher}
\address[Pascal Auscher] {Universit\'e Paris-Saclay, CNRS, Laboratoire de Math\'{e}matiques d'Orsay, 91405 Orsay, France}
\email{pascal.auscher@universite-paris-saclay.fr}
\author{Lukas Niebel}
\address[Lukas Niebel]{ETH Z\"urich, Department of Mathematics, R\"amistrasse 101, 8092 Z\"urich, Switzerland.}
\email{lukas.niebel@math.ethz.ch}
\date{September 30, 2026}

\subjclass[2020]{Primary: 35A08, 35R05, 35R09, 35H10 Secondary:  35K70, 35B65, 35D30} 

\keywords{kinetic equations, non-local diffusion, rough jump kernels, fundamental solutions, upper estimates}

\begin{document}

\begin{abstract}
  We study the fundamental solution operators associated to Kolmogorov equations with symmetric non-local diffusion with rough kernels. We prove that these operators are induced by measurable kernels and prove sharp upper bounds in terms of a kinetic profile governed by the interplay of transport and non-local diffusion. Our proof combines the small-jump with loss decomposition of P.-A.~Meyer with the ideas of Nash. The proof is purely analytic.
\end{abstract}

\maketitle

\tableofcontents

\section{Introduction}

Fix $d\ge1$ and $0<\beta<1$.
We work on a finite time interval $[0,T]$. We consider the Kolmogorov equation with symmetric non-local diffusion operator with rough kernel
\begin{equation}
  \label{eq:main-equation}
  (\partial_t+v\cdot\nabla_x)f+\cA_k f=0,
  \qquad (t,x,v)\in(0,T)\times\R^d\times\R^d,
\end{equation}
with a suitable initial condition, where
\begin{align}
  \label{eq:form}
  \cA_k f(t,x,v)= 2\ \text{p.v.} \int_{\R^d}
  (f(t,x,v)-f(t,x,v'))
  k(t,x,v,v')\dd v' .
\end{align}

The kernel $k$ is assumed to be measurable, and to satisfy for some $0<\lambda_k\le \Lambda_k<\infty$
\begin{align}
  \label{eq:stable-comparable}
  \lambda_k |v-v'|^{-d-2\beta}
   & \le k(t,x,v,v')=k(t,x,v',v)
  \le \Lambda_k |v-v'|^{-d-2\beta}
\end{align}
for almost every $(t,x,v,v') \in [0,T] \times \R^{3d}$.
The corresponding functional analytic setup will be made precise below.

Our main result is the following.

\begin{thm}[Existence and decay of the fundamental solution]\label{thm:main}
  Assume \eqref{eq:stable-comparable} and consider
  \eqref{eq:main-equation} with $\cA_k$ given by \eqref{eq:form}.
  The fundamental solution operators $\Gamma(t,s)$ exist for all $0\le s<t\le T$ and are induced by a nonnegative measurable kernel with the following estimates,
  writing $\tau=t-s>0$,
  \begin{equation}\label{eq:main}
    0\le {} \Gamma(t,x,v,s,y,w)
    \le {}  \frac C{\tau^{d+d/\beta}}
    \ \HZK_{d+2\beta}\!\left(
    \frac{x-y-\tau w}{\tau^{1+1/(2\beta)}},
    \frac{v-w}{\tau^{1/(2\beta)}}
    \right)
  \end{equation}
  for almost every $(x,v,y,w) \in \R^{4d}$, where
  \begin{equation}
    \label{cmp:eq:K-def}
    \HZK_{d+2\beta}(x,v)
    =(1+|x|+|v|)^{-d-2\beta}
    \int_0^1(1+|x-sv|)^{-d-2\beta}\dd s.
  \end{equation}
  The constant $C$ depends only on $d,\beta, \lambda_{k}, \Lambda_{k}$.
\end{thm}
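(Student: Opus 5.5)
The plan is to follow the route announced in the abstract: truncate the jump kernel at a scale adapted to the time step, prove Nash-type on-diagonal bounds for the truncated problem, add a Davies-type off-diagonal improvement, and finally restore the large jumps with Meyer's perturbation.

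\emph{Well-posedness and the $\L^1$--$\L^\infty$ bound.} First we construct $\Gamma(t,s)$ as the solution operator of \eqref{eq:main-equation} in the natural kinetic energy space, via a Lions-type argument. The key properties to record are positivity, $\L^1$ and $\L^\infty$ contractivity, and the adjoint relation: the adjoint solves the backward equation with $-v\cdot\nabla_x$, and the symmetry of $k$ is what makes this work. Next comes a kinetic Nash inequality adapted to the fractional velocity energy $\iint |f(v)-f(v')|^2|v-v'|^{-d-2\beta}$ together with transport. From it we get the on-diagonal bound
\[
\|\Gamma(t,s)\|_{\L^1\to\L^\infty}\lesssim \tau^{-d-d/\beta},
\]
by Nash's ODE argument for $\|f(t)\|_2^2$, combined with duality and the semigroup (evolution) property. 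The required Nash inequality should follow from a hypoelliptic gain of regularity in $x$, at order $2\beta/(1+2\beta)$, obtained by averaging along transport.

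\emph{Meyer decomposition.} Fix the time scale $\rho=\tau^{1/(2\beta)}$. Split $k=k\mathds 1_{|v-v'|\le\rho}+k\mathds 1_{|v-v'|>\rho}$. The large-jump part is a bounded operator with total mass $\lesssim\rho^{-2\beta}=\tau^{-1}$. Writing it as $J-\kappa$, where $J$ is an integral operator with kernel bounded by $\rho^{-2\beta}|v-v'|^{-d-2\beta}\mathds 1_{|v-v'|>\rho}$, we apply Duhamel. This expresses $\Gamma$ as a sum over the number of big jumps of compositions of the truncated fundamental solution $\Gamma^\rho$ with $J$, exactly as in Barlow--Grigor'yan--Kumagai. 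For the truncated kernel we prove exponential off-diagonal decay with Davies' method: conjugate by $e^{\psi(x,v)}$ with Lipschitz weights. Because jumps are bounded by $\rho$, the Nash argument survives this conjugation with an error of size $e^{c|\nabla\psi|\rho}$. The weight must be chosen to respect the kinetic geometry, so it should depend on $x-y-\tau w$ scaled by $\tau^{1+1/(2\beta)}$ as well as on $v$. Optimizing over $\psi$ then gives a Gaussian-like bound for $\Gamma^\rho$ in the kinetic distance, with rate $\tau^{-d-d/\beta}\exp(-c\,\text{dist}/\rho)$.

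\emph{Assembling the profile (main obstacle).} Inserting these bounds into the jump expansion, the one-jump term dominates. A single jump of size $|v-v'|$ occurs at some intermediate time $\sigma\tau$ with $\sigma\in(0,1)$. Before the jump, $x$ is transported with velocity near $w$; after it, with velocity near $v$. The resulting position mismatch is $x-y-\tau w-(1-\sigma)\tau(v-w)$, and this is exactly the term $|x-sv|$ in $\HZK_{d+2\beta}$ after rescaling, with $s=1-\sigma$. Integrating over $\sigma$ produces the $\int_0^1$ in \eqref{cmp:eq:K-def}. The jump density contributes the factor $(1+|v|)^{-d-2\beta}$ in rescaled variables.

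The hard step is to show that the multi-jump terms and the exponential tails of $\Gamma^\rho$ are dominated by this profile, uniformly. This requires a convolution-type inequality $\HZK*\HZK\lesssim\HZK$ in the kinetic (Galilean-twisted) group structure, together with the geometric control of the factor $(1+|x|+|v|)^{-d-2\beta}$. I would first establish such a sub-convolution lemma for $\HZK_{d+2\beta}$ by case analysis on which of $|x|$, $|v|$, $|x-sv|$ is largest. The series in the number of jumps then sums because each jump costs $C\tau\cdot\tau^{-1}$ divided by $n!$, from the time-ordered integrals. The remaining work is to check that the constants depend only on $d,\beta,\lambda_k,\Lambda_k$ and to treat the rough time dependence of $k$. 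The latter requires working with evolution families rather than semigroups throughout.
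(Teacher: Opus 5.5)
\textbf{Main gap: the assembly step.} You have no workable mechanism for the compositions in the jump expansion, and this is where the paper's key idea sits.

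The $m$-jump Duhamel term is $\Gamma^\rho(\tau,r_m)J_{r_m}\Gamma^\rho(r_m,r_{m-1})\cdots J_{r_1}\Gamma^\rho(r_1,0)$, integrated over $0<r_1<\cdots<r_m<\tau$. So the factors live on subintervals of arbitrarily small length $\sigma$, while the truncation stays at $\rho=\tau^{1/(2\beta)}$. For such a factor, your Nash--Davies step gives a bound of size $\sigma^{-d-d/\beta}$ that is essentially not improved at velocity distances below $\rho$: Davies' method cannot see the polynomial tail produced by jumps of size between $\sigma^{1/(2\beta)}$ and $\rho$. The $\L^1$-mass of that bound therefore blows up roughly like $(\tau/\sigma)^{d/\beta}$, with $d/\beta>1$. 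Consequently, inserting pointwise bounds for every factor produces integrals over the time simplex that diverge. A sub-convolution lemma $\HZK*\HZK\lesssim\HZK$ does not repair this. The short factors do not satisfy $\HZK$-type bounds at their own scale unless you already know the theorem, and it is not the inequality that is actually needed.

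\emph{The missing idea is never to compose kernel bounds.} Rescale to $\tau=\rho=1$.
\begin{enumerate}
\item Using positivity, dominate the gain operator by velocity translations: $J_rg\le C\Lambda_k\int_{|\eta|>1}\Theta_\eta g\,|\eta|^{-d-2\beta}\dd\eta$ for $g\ge0$, where $\Theta_\eta g(x,v)=g(x,v-\eta)$.
\item Fix $r_j$ and $\eta_j$, and change variables $(x,v)\mapsto(x+\gamma_x(q),v+\gamma_v(q))$ along the piecewise linear trajectory with $\gamma_v(q)=\sum_{r_i<q}\eta_i$ and $\gamma_x(q)=\sum_{r_i<q}(q-r_i)\eta_i$. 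Then $\Gamma^\rho(1,r_m)\Theta_{\eta_m}\cdots\Theta_{\eta_1}\Gamma^\rho(r_1,0)$ becomes the fundamental solution on all of $[0,1]$ of another small-jump equation with loss potential. Its kernel $k(q,x+\gamma_x(q),v+\gamma_v(q),v'+\gamma_v(q))$ satisfies the same bounds.
\item The single unit-time rapid-decay estimate therefore applies uniformly in $m,r,\eta$. Symmetrising over the simplex bounds the kernel of the $m$-jump term by $C_N\frac{(C\Lambda_k)^m}{m!}(\phi_N*\nu^{*m})(x-y-w,v-w)$. Here $\phi_N(X,V)=(1+|X|+|V|)^{-N}$, and $\nu$ is the image of $\mathds 1_{\{|\eta|>1\}}|\eta|^{-d-2\beta}\dd\eta\dd\theta$, $\theta\in(0,1)$, under $(\theta,\eta)\mapsto(\theta\eta,\eta)$.
\item The only convolution estimate then needed is the Euclidean one, $\HZK_{d+2\beta}*\nu\le C\HZK_{d+2\beta}$ on $\R^{2d}$, iterated. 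Together with $\phi_N\le\HZK_{d+2\beta}$ for $N\ge2d+4\beta$, this gives the profile.
\end{enumerate}

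\textbf{Corrections to the Davies step.} First, the weight. A weight that is static in time, or transported with the fixed velocity $w$ as your choice of $x-y-\tau w$ suggests, does not commute with $v\cdot\nabla_x$. Conjugation creates the potential $v\cdot\nabla_x\psi$ (respectively $(v-w)\cdot\nabla_x\psi$), which is unbounded in $v$, and the weighted estimate breaks down. The weights must be constant along free transport in $v$ itself, $h_t(x,v)=h(x-tv,v)$, so that $(\partial_t+v\cdot\nabla_x)e^{h_t}=0$. The only cost is then $\omega_{k,h}$. For jumps of size at most $1$, it is $\lesssim\Lambda_k\delta^2e^{3\delta}$ when $\|\nabla_vh\|_\infty\le2\delta$ and $\|\nabla_xh\|_\infty\le\delta$.

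Second, ``the Nash argument survives this conjugation'' is not automatic in the kinetic setting. The Nash step uses $\L^1$-contractivity, which the conjugated evolution lacks. It also uses the transfer of regularity in $x$, an integrated-in-time $\L^2$ estimate for solutions that does not apply to powers of solutions as in a Carlen--Kusuoka--Stroock scheme. The paper instead proceeds as follows:
\begin{enumerate}
\item It proves $\|e^{h_t}\Gamma(t,s)e^{-h_s}\|_{\L^p\to\L^p}\le\exp\big(\frac{p+2}{2}\int_s^t\omega_{k,h}\big)$ for $2\le p<\infty$.
\item It Stein-interpolates this with the unperturbed $\L^p\to\L^\infty$ bound to get $\L^p\to\L^{2p}$.
\item It iterates with $p_j=2^{j+1}$ on time steps of length $3(t-s)4^{-j-1}$.
\end{enumerate}

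A minor slip: the kernel of $J$ is bounded by $\Lambda_k|v-v'|^{-d-2\beta}\mathds 1_{\{|v-v'|>\rho\}}$, whose mass is of order $\rho^{-2\beta}$. There is no extra factor $\rho^{-2\beta}$.
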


\begin{rem}
  Further properties of the fundamental solution operators and their kernels can be found in Theorem \ref{thm:representation0T} and Theorem \ref{thm:nash-kernel}.
\end{rem}

\begin{rem}
  We call the profile in \eqref{cmp:eq:K-def} the \emph{kinetic profile of order $2\beta$} as the decay from the integral term exhibits  a  combination of the action of the diffusion of order $2\beta$ and the kinetic transport term. In the homogeneous case, that is for solutions independent of $x$ and there is no transport, we recover the typical polynomial decay for non-local diffusion of order $2\beta$. This kinetic profile appears in the sharp heat-kernel estimates for kinetic equations with non-local diffusion with kernels of the form $\kappa(v')|v-v'|^{-d-2\beta}$ with $0<\lambda \le \kappa \le \Lambda$, such as the fractional Kolmogorov equation
  \[
    (\partial_t+v\cdot\nabla_x)f + (-\Delta_v)^{\beta}f = 0,
  \]
  in \cite{Hou2026}. There sharp bounds on the fundamental solution from above and from below in terms of the kinetic profile \eqref{cmp:eq:K-def} are proven. We extend the upper bound to the rough kernel setting. In particular, we provide an alternative proof of their result. 
\end{rem}

The study of Kolmogorov equations with non-local rough diffusion is one of the central research tasks in kinetic regularity theory. This is due to its relation to the Boltzmann equation \cite{MR4433077}. A landmark result was the proof of the weak Harnack inequality in \cite{MR4049224}; see also \cite{MR4688651}. The strong Harnack inequality was proven to be false in general \cite{MR4828346}. This is related to the fact that a Moser-type $\L^2-\L^\infty$ cannot hold without a tail; see also \cite{anceschi2026giorginashmosertheorykineticequations,loher2026semilocalbehaviournonlocalhypoelliptic}. A Harnack inequality for nonnegative solutions localised in $(t,x)$ and $v\in\R^d$ was proven in \cite{loher2026semilocalbehaviournonlocalhypoelliptic}. Weak solutions to \eqref{eq:main-equation} were constructed in \cite{AIN} and the corresponding fundamental solution operators were introduced in \cite{AIN2}. The precise kernel estimates even in the constant coeffficient case remained an open problem for a long time. A preliminary result was obtained in \cite{loher2026semilocalbehaviournonlocalhypoelliptic}. The sharp kernel estimates in the constant coefficient case were  obtained only recently in \cite{Hou2026}. We also mention the one-dimensional result \cite{grube2024pointwiseestimatesfundamentalsolution}.

Let us now summarise our strategy.
\begin{enumerate}[label=(\arabic*)]
  \item We recall the existence and uniqueness of fundamental solution operators for kinetic equations with non-local diffusion and rough kernels and their backward equations; compare \cite{AIN2}.
  \item We introduce the class of equations we consider: kernels may be restricted to small jumps and we allow for potentials.
  \item We prove contractivity and positivity of the fundamental solution operators on $\L^p$ spaces, $1\le p\le\infty$.
  \item We establish hypercontractivity of the fundamental solution operators through an anisotropic Nash inequality. We deduce the existence of a measurable kernel of the fundamental solution operator and obtain on-diagonal upper bounds.
  \item We implement Davies' method for non-local kinetic equations on $\L^p$ spaces, $2\le p<\infty$ following the ideas of Carlen--Kusuoka--Stroock developed in parabolic context \cite{MR898496}.
  \item The previous step introduces an exponential weight parametrised via a Lipschitz function. Now, we obtain precise decay estimates in terms of this Lipschitz function using complex interpolation and iteration. Here, we focus on kernels that are restricted to small jumps.
  \item We optimise the Lipschitz function to obtain off-diagonal rapid decay for the fundamental solution of equations with kernels that have small-jumps only.
  \item In the previous steps we considered mostly zero potentials. By a comparison argument, the operator and kernel upper bounds extend to bounded nonnegative potentials.
  \item At the heart of our proof is a small-jump decomposition with loss potential \`a la P.-A.~Meyer \cite{MR415784} (which was also a key ingredient in \cite{Hou2026}). We assemble all of the estimates and complete the proof.
\end{enumerate}

Many of these steps may seem familiar to readers working on similar problems in a parabolic context and were available already in the literature. Step (1) is interesting in relation to existence. It furnishes a universal object, the family of fundamental solution operators, with which one can work whether or not they are represented by  a kernel, usually called a fundamental solution in the literature.  This universality in parabolic context was emphasised in \cite{MR4924261}. Working without a priori access to a kernel was the key point of the construction in \cite{AIN2} in the kinetic context, because existence of a kernel requires further assumptions like the ones we use here. Concerning kernel estimates, 
one key insight is that \cite{Hou2026} implements a Meyer decomposition in the kinetic setting for constant kernels. The importance of the Meyer decomposition for rough kernels was mentioned in \cite{MR4530315,loher2026semilocalbehaviournonlocalhypoelliptic}. Another key part of our strategy is a Moser iteration at the level of mapping properties of the fundamental solution operators, which is close to the original ideas of Nash \cite{MR100158}; see also \cite{MR855753,MR898496}.
This is combined with the Davies method. We emphasise that the strategy in the present work differs from the one  for local diffusion in \cite{AIN2} that relied on Moser-type $\L^2-\L^\infty$ bounds instead of the iteration.  It is likely though that a different proof can be obtained using the strategy here.

In the case of kinetic equations with local diffusion, upper and lower estimates for the fundamental solution operators, assuming the existence of the kernel, were provided in \cite{MR4181953,MR3717350}. The fundamental solution operators in this case were constructed in \cite{AIN2}, where, using  the Davies method  in kinetic context for the first time,  it was also proved that they are induced by measurable kernels satisfying the sharp upper bound. Subsequently,  \cite{dietert2025nashsgboundkolmogorov} obtains a proof of the lower bound following the ideas of Nash \cite{MR100158}; see also \cite{dietert2025criticaltrajectorieskineticgeometry}.

In the non-local parabolic setting, which corresponds to the homogeneous kinetic setting where functions are constant in $x$, our argument provides an alternative proof of the existence of the fundamental solution operator and its kernel, and of its upper estimates. We refer to \cite{MR3096796,MR5123698,MR4530315,MR1895210,MR2008600} and the references therein.

\subsection*{Acknowledgments}

The first author's visit to M\"unster in June 2026 was supported by the University
of M\"unster.
Part of this work was done during the workshop ``Regularity Theory for Evolution Equations'' at the SwissMAP Research Station, Les
Diablerets. The authors thank the organisers and the SwissMAP Research Station for their hospitality.

Lukas Niebel is funded by SNSF Starting Grant
TMSGI2\textunderscore226018 and by the Deutsche Forschungsgemeinschaft (DFG,
German Research Foundation) under Germany's Excellence Strategy
EXC 2044/2--390685587, Mathematics M\"unster:
Dynamics--Geometry--Structure.

A CC-BY 4.0 \url{https://creativecommons.org/licenses/by/4.0/} public copyright license has been applied by the authors to the present document and will be applied to all subsequent versions up to the Author Accepted Manuscript arising from this submission.

\subsection*{Declaration of AI use.}
The mathematical work presented in this paper was carried out by the authors without the use of AI tools.

\section{$\L^2$ theory for the fundamental solution operator}

\subsection{General form setting}

  We recall the form setting and the $\L^2$ theory for weak solutions from \cite{AIN}, using the presentation of the homogeneous spaces in \cite{AN}. We then recall the construction of fundamental solution operators from \cite{AIN2}.

Throughout, we assume $0<\beta<1$. This setup also works for complex forms.
We define for $f$ locally square integrable,
\[\|f\|_{\Hdot^\beta_{v}}^2 = \int_{{\R}^{2d}} \frac{|f(v)-f(v')|^2}{|v-v'|^{d+2\beta}}\dd (v,v'),
\]
and $\H^\beta_{v}=\{f\in \L^2_{v}\, ; \, \|f\|_{\Hdot^\beta_{v}}<\infty\}$ with
\[\|f\|_{\H^\beta_{v}}^2= \|f\|_{\Hdot^\beta_{v}}^2+ \|f\|_{\L^2_{v}}^2.
\]
It is classical that this space agrees with the potential space $(I-\Delta_{v})^{-\beta/2}\L^2_{v}$ and that $ \|(I-\Delta_{v})^{\beta/2} f\|_{\L^2_{v}}$ is an equivalent norm.

We fix $T>0$.
Consider a sesquilinear form
$$a_{t,x}\colon \H^{\beta}_{v} \times \H^\beta_{\vphantom{v} v} \to \IC$$ with
\begin{equation}
  \label{eq:ellip-upper}
  |a_{t,x}(f,g)|\le \Lambda \|f\|_{\Hdot^\beta_{v}} \|g\|_{\Hdot^\beta_{v}}+c^0\|f\|_{\L^2_{v}}\|g\|_{\L^2_{v}}
\end{equation}
for almost every $(t,x)\in [0,T]\times \, \R^d$, with $\Lambda>0$ and $c^0 \ge 0$. We assume that for $f,g \in \L^2_{t,x}\H^{\beta}_{\vphantom{t,x} v}$, the function
$(t,x)\mapsto a_{t,x}(f,g)$ is measurable and we set
\begin{equation}
  \label{eq:a-defi}
  a(f,g)=\int_{(0,T)\times \R^d} a_{t,x}(f,g)\dd (t,x).
\end{equation}
For short, we have written  $a_{t,x}(f(t,x, \cdot),g(t,x,\cdot))$ as $a_{t,x}(f,g)$ in \eqref{eq:a-defi}. In the notation $ \L^2_{t,x}\H^{\beta}_{\vphantom{t,x} v}$, we do not indicate the time interval when it is clear from context. From \eqref{eq:ellip-upper}, we deduce that $a$ is a continuous sesquilinear form on $\L^2_{t,x}\H^{\beta}_{\vphantom{t,x} v}$.
As $\L^2_{t,x}\H^{-\beta}_{\vphantom{t,x} v}$ identifies with the dual of $\L^2_{t,x}\H^{\beta}_{\vphantom{t,x} v}$ in the duality extending the $\L^2_{t,x,v}$ inner product, we can set $\cA: \L^2_{t,x}\H^{\beta}_{\vphantom{t,x} v} \to \L^2_{t,x}\H^{-\beta}_{\vphantom{t,x} v}$ the bounded operator associated to $a$ by
$$ \angle{\cA f}{g}= a(f,g), \quad f,g\in \L^2_{t,x}\H^{\beta}_{\vphantom{t,x} v}.$$

We shall work with coercivity in the sense that  there exists $\lambda>0$, $ c_{0}\in \R $ such that for all $(t,x)\in (0,T)\times \, \R^d$ and all $f\in \H^\beta_{v}$,
\begin{align}
  \label{eq:weakelliptic-bis}
  \Re a_{t,x}(f,f) \ge \lambda \|f\|_{\Hdot^\beta_{v}}^2 -c_{0}\|f\|^2_{\L^2_{v}}.
\end{align}

\subsection{Weak solutions and fundamental solution operators}

Let $\cA$ be as above with \eqref{eq:ellip-upper} and \eqref{eq:weakelliptic-bis}.

\begin{defn}[Weak solutions to the kinetic Cauchy problem]
  Let $0<T<\infty$.
  Let $S \in \cD' ((0,T)\times \R^d\times \R^d)$ and $\psi \in \L^2_{x,v}$. Assume that the distribution $S$ extends to a continuous linear functional on $\cD ([0,T)\times \R^{d}\times \R^d)$. A distribution $f \in \cD' ((0,T)\times \R^{d}\times\R^d)$ is said to be a \emph{weak solution} {to the Cauchy problem on $[0,T]$} \begin{align}
    \label{eq:CP}
    \begin{cases}
      (\partial_{t}+v\cdot\nabla_{x})f + \cA f= S, & \\
      f(0)=\psi,                                   &
    \end{cases}
  \end{align}
  if $f\in \L^2_{t,x}\H^{\beta}_{\vphantom{t,x} v}$ and for all $h\in \cD ([0,T)\times \R^{d}\times\R^d)$,
  \[
    -\int_{(0,T)\times\R^{2d}} f \, (\partial_{t}+v\cdot\nabla_{x}) \overline h \dd (t,x,v) + a(f,h) = \angle{S}{h} + \int_{\R^{2d}} \psi (x,v) \overline h (0,x,v) \dd (x,v).
  \]
\end{defn}

One can also define weak solutions of the final value problem for the adjoint backward equation $-(\partial_{t}+v\cdot\nabla_{x})\tilde f + \cA^* \tilde f= \tilde S$, $\tilde f(T)=\tilde \psi$.

  We recall a special case of the well-posedness result from \cite{AIN}, together with the energy identity from \cite[Lemma~4.2]{AIN}.
  \begin{thm} \label{thm:L2-wellposedness}
    For $S\in\L^1((0,T);\L^2_{x,v})$ and $\psi\in\L^2_{x,v}$, the Cauchy problem \eqref{eq:CP} has a unique weak solution. It belongs to $\C([0,T];\L^2_{x,v})$, the function $t\mapsto\|f(t)\|_{\L^2_{x,v}}^2$ is absolutely continuous, and
    \begin{equation}
      \label{eq:L2-energy}
      \frac12\|f(t)\|_{\L^2_{x,v}}^2
      +\Re\int_s^t\int_{\R^d}a_{\tau,x}(f,f)\dd x\dd\tau
      =\frac12\|f(s)\|_{\L^2_{x,v}}^2
      +\Re\int_s^t\angle{S(\tau)}{f(\tau)}\dd\tau
    \end{equation}
    for $0\le s\le t\le T$, the brackets $\angle{\ }{\ }$ denoting the $\L^2_{x,v}$ inner product. The analogous assertion holds for the adjoint backward final value problem.
  \end{thm}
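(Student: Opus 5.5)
The result is a special case of the well-posedness theory of \cite{AIN}, so the plan is to place ourselves in its hypotheses and then recover the energy identity by regularisation. The setting matches: $a$ is bounded on $\L^2_{t,x}\H^\beta_v$ by \eqref{eq:ellip-upper} and satisfies the Gårding-type coercivity \eqref{eq:weakelliptic-bis}.

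\textbf{Reduction to $c_0=0$.} We substitute $f=e^{c_0 t}g$. Then $g$ solves the same problem with form $a+c_0\langle\cdot,\cdot\rangle$, which is coercive with $c_0$ replaced by $0$, source $e^{-c_0t}S$ and the same initial datum. This substitution preserves every property claimed in the statement. We may therefore assume
\[
\Re a_{t,x}(f,f)\ge\lambda\|f\|_{\Hdot^\beta_v}^2 .
\]

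\textbf{Existence.} We follow the Lions-type approach of \cite{AIN}. Consider the test space $\cD([0,T)\times\R^{2d})$ equipped with the norm
\[
\|h\|_{\L^2\H^\beta}^2+\|h(0)\|_{\L^2}^2 .
\]
On it, the form
\[
E(f,h)=-\int f(\partial_t+v\cdot\nabla_x)\overline h+a(f,h)
\]
satisfies the coercivity
\[
\Re E(h,h)\ge\lambda\|h\|_{\Hdot^\beta}^2+\tfrac12\|h(0)\|^2,
\]
because the transport term is skew up to the boundary term at $t=0$. We add an $\L^2$ weight $e^{-\gamma t}$ to control the inhomogeneous part of the $\H^\beta$ norm. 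Lions' representation theorem then gives a solution $f\in\L^2_{t,x}\H^\beta_v$ for a source $S$ in $\L^2\H^{-\beta}$ plus an $\L^1\L^2$ part. For the $\L^1_t\L^2_{x,v}$ part we first treat $S\in\L^2\L^2$ and then pass to the limit using the a priori energy bound derived below.

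\textbf{Regularity and the energy identity.} This is the main step, and the main obstacle is the lack of regularity in $x$: we only know $(\partial_t+v\cdot\nabla_x)f\in\L^2\H^{-\beta}+\L^1\L^2$, so we cannot simply test the equation against $f$. We proceed as follows.
\begin{itemize}
\item We mollify along the kinetic translations, namely in $(t,x)$ along the transport flow, and convolve in $v$. The commutator with $v\cdot\nabla_x$ then vanishes, or is controlled, by the kinetic structure.
\item We test the regularised equation against the regularised $f$ and pass to the limit.
\item This gives $f\in\C([0,T];\L^2_{x,v})$, absolute continuity of $t\mapsto\|f(t)\|^2_{\L^2_{x,v}}$, and the identity \eqref{eq:L2-energy}; this is precisely \cite[Lemma~4.2]{AIN}.
\end{itemize}
An alternative route is to invoke the kinetic embedding of \cite{AN} for the homogeneous space of functions with $f\in\L^2\Hdot^\beta$ and $(\partial_t+v\cdot\nabla_x)f\in\L^2\Hdot^{-\beta}$. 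That space embeds in $\C_t\L^2$, and the energy identity holds on it.

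\textbf{Uniqueness.} Let $f$ be the difference of two weak solutions. It solves the problem with $S=0$ and $\psi=0$, so \eqref{eq:L2-energy} and coercivity give
\[
\tfrac12\|f(t)\|^2\le0 .
\]
Hence $f=0$.

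\textbf{Backward problem.} The adjoint backward problem follows by time reversal $t\mapsto T-t$ and $x\mapsto -x$. This maps $-(\partial_t+v\cdot\nabla_x)$ to the forward operator, and $\cA^*$ satisfies the same bounds \eqref{eq:ellip-upper} and \eqref{eq:weakelliptic-bis}.
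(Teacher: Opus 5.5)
Your proposal is correct and takes the same route as the paper. The paper gives no independent proof here: it recalls the statement from \cite{AIN}, taking well-posedness from there and the continuity and energy identity from \cite[Lemma~4.2]{AIN}, which are exactly the sources your sketch reduces to. The details you add are consistent with that theory: the shift $f=e^{c_0t}g$, Lions' representation theorem on the test space, approximation of $\L^1_t\L^2_{x,v}$ sources by $\L^2_t\L^2_{x,v}$ ones, uniqueness via the energy identity for the difference of two solutions, and time reversal for the backward problem.
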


We now recall the definition of a fundamental solution operator.
We use the tensor product notation $\v \otimes \psi$ for products of functions $\v$ of the $t$ variable and functions $\psi$ of the $(x,v)$ variables, that is $(\v \otimes \psi)(t,x,v)=\v(t)\psi(x,v)$. By a tensor product of test functions $\v \otimes \psi$, we mean $\v\in \cD(0,T)$ and $\psi\in \cD(\R^{2d})$.  

\begin{defn}[Definition of a fundamental solution]
  A fundamental solution to $(\partial_{t}+v\cdot\nabla_{x}) + \cA$ is a family of
  operators $\Gamma=(\Gamma(t,s))_{ (s, t)\in [0,T]^2}$, called fundamental solution operators, with the following properties.
  \begin{enumerate}
    \item They are uniformly bounded operators on $\L^2_{x,v}$.
    \item $\Gamma(t,s)=0$ for almost every $(s,t)$ with $s>t$.
    \item For any $\psi,\tpsi \in \cD(\R^{2d})$, $(s,t)\mapsto \angle {\Gamma(t,s)\psi}{\tpsi}$ is a locally integrable function:
          \[\int_{K\times\tilde K} |\angle {\Gamma(t,s)\psi}{\tpsi}|\dd (s,t) <\infty
          \]
          for compact subsets $K, \tilde K$ in $(0,T)$.
    \item For any tensor product of test functions $S=\phi\otimes \psi$, the weak solution $f$ to the kinetic Cauchy problem
          $(\partial_{t}+v\cdot\nabla_{x})f + \cA f=S$ on $(0,T)$ with $f(0)=0$, satisfies
          \begin{align}
            \angle{f(t)}{\tpsi}=\int_{(0,t)} \phi(s)\angle {\Gamma(t,s)\psi}{\tpsi}\, \dd s
          \end{align}
          for all $\tpsi\in \cD(\R^{2d})$ and almost every $t\in (0,T]$.
  \end{enumerate}

  One defines the fundamental solution $\widetilde\Gamma=(\widetilde\Gamma(s,t))_{ (s,t) \in [0,T]^2}$ to the backward operator  $-(\partial_{s}+v\cdot \nabla_{x})+\cA^*$ analogously and (ii) is replaced by $ \widetilde\Gamma(s,t)=0$ if $s>t$.
\end{defn}

  The fundamental solution operators exist and are unique by \cite[Theorems~3.3 and~3.6]{AIN2}. We collect the representation formula and the operator identities that we shall use below.

  \begin{thm}[Representation of weak solutions]
    \label{thm:representation0T}
    Let $0\le s<T$, $\psi\in\L^2_{x,v}$ and $S\in\L^1((s,T);\L^2_{x,v})$. Then the unique weak solution with initial value $\psi$ at time $s$ is given by
    \begin{equation}
      \label{eq:L2-duhamel}
      f(t)=\Gamma(t,s)\psi+\int_s^t\Gamma(t,r)S(r)\dd r,
      \qquad s\le t\le T,
    \end{equation}
    in $\L^2_{x,v}$. The integral is a Bochner integral in $\L^2_{x,v}$. The analogous representation holds for the adjoint backward final value problem using $\widetilde\Gamma$.

    Moreover, the fundamental solution operators satisfy the following properties.
    \begin{enumerate}
      \item For every $s\in[0,T]$, the map $[s,T]\ni r\mapsto\Gamma(r,s)$ is strongly continuous on $\L^2_{x,v}$. For every $t\in[0,T]$, the map $[0,t]\ni r\mapsto\widetilde\Gamma(r,t)$ is strongly continuous on $\L^2_{x,v}$.
      \item We have the adjoint relation $\widetilde\Gamma(s,t)=\Gamma(t,s)^*$ for $0\le s\le t\le T$.
      \item We have the Chapman--Kolmogorov relation $\Gamma(t,s)=\Gamma(t,r)\Gamma(r,s)$ for $0\le s<r<t\le T$.
    \end{enumerate}
  \end{thm}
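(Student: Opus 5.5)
The plan is to build everything from the two objects in Theorem~\ref{thm:L2-wellposedness}: uniqueness of weak solutions and the energy identity \eqref{eq:L2-energy}. The fundamental solution operators are already known to exist by \cite{AIN2}. So the work is to identify $\Gamma(t,s)\psi$ with the weak solution started from $\psi$ at time $s$, and then read off the properties from uniqueness.

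\textbf{Step 1: the solution map.} For $\psi\in\L^2_{x,v}$ I let $f_\psi$ be the unique weak solution on $[s,T]$ with $f_\psi(s)=\psi$ and $S=0$. I define $U(t,s)\psi=f_\psi(t)$. The energy identity \eqref{eq:L2-energy} together with \eqref{eq:weakelliptic-bis} gives $\|f(t)\|^2\le \|\psi\|^2+2c_0\int_s^t\|f\|^2$. Gr\"onwall then yields $\|U(t,s)\|\le e^{c_0(t-s)}$, so the $U(t,s)$ are uniformly bounded. Strong continuity of $t\mapsto U(t,s)$ is exactly $f_\psi\in\C([s,T];\L^2_{x,v})$. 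Chapman--Kolmogorov $U(t,s)=U(t,r)U(r,s)$ follows from uniqueness: restricting $f_\psi$ to $[r,T]$ gives a weak solution with datum $f_\psi(r)$. To check this restriction claim, I test with $h$ cut off near $r$ and pass to the limit using continuity in $\L^2$.

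\textbf{Step 2: Duhamel formula.} I set $g(t)=U(t,s)\psi+\int_s^tU(t,r)S(r)\dd r$, a Bochner integral that is legitimate by the uniform bound and strong measurability. To see that $g$ is a weak solution, I pair with a test function $h$ and use Fubini. The integral term reduces, for each fixed $r$, to the weak formulation on $[r,T]$ with datum $S(r)$; integrating in $r$ gives the source term $\angle{S}{h}$. By uniqueness, $g$ is the weak solution. Taking $s=0$, $\psi=0$ and $S=\phi\otimes\psi$ in property (iv) of the definition shows that $U$ is a fundamental solution. By the uniqueness part of \cite[Theorem~3.6]{AIN2}, $U(t,s)=\Gamma(t,s)$, at least for almost every pair $(s,t)$.

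\textbf{Step 3: upgrade to every pair.} Upgrading the identification from almost every pair to all $(s,t)$ is the main obstacle. I plan to define $\Gamma$ as $U$ itself; it is still a fundamental solution, because the definition only involves integrals in $s$. Alternatively, I would use that both families are strongly continuous, $U$ in $t$ by Step 1, and invoke the continuity statements in \cite{AIN2}.

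\textbf{Step 4: adjoint relation.} I construct $\widetilde U(s,t)$ in the same way from the backward problem. For $\widetilde\Gamma(s,t)=\Gamma(t,s)^*$, I take a forward solution $f$ from $(s,\psi)$ and a backward solution $\tilde f$ from $(t,\tilde\psi)$. I then show that $r\mapsto\angle{f(r)}{\tilde f(r)}$ is constant on $[s,t]$. This is the polarised form of the energy identity: the form terms $a(f,\tilde f)$ and $\overline{a^*(\tilde f,f)}$ cancel. Justifying this requires the mixed energy identity of \cite[Lemma~4.2]{AIN}, which I would take as given in polarised form. Evaluating the constant at $r=s$ and $r=t$ gives $\angle{\psi}{\widetilde U(s,t)\tilde\psi}=\angle{U(t,s)\psi}{\tilde\psi}$.
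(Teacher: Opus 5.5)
The paper gives no argument of its own for this statement. It imports it from \cite[Theorems~3.3 and~3.6]{AIN2}, together with the energy identity of \cite[Lemma~4.2]{AIN}. You instead rebuild it from Theorem~\ref{thm:L2-wellposedness}, using \cite{AIN2} only for uniqueness of fundamental solutions, and the outline is sound. You are right that the definition determines $\Gamma$ only off a null set of pairs $(s,t)$. The pointwise-in-$(s,t)$ assertions therefore concern the representative $\Gamma:=U$, which is how the paper uses them. Your route gives a self-contained derivation from well-posedness and the energy identity; the citation buys brevity.

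One step must be reordered. Step~1 gives regularity of $U(t,r)$ only in the first variable, but Step~2 needs it in the second. Strong measurability of $r\mapsto U(t,r)S(r)$ does not follow from the uniform bound. Property (iii) of the definition also needs measurability of $(s,t)\mapsto\langle U(t,s)\psi,\tpsi\rangle$, which you do not check.

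Both follow from Step~4, which does not depend on Step~2. Since $\langle U(t,r)\phi,\tpsi\rangle=\langle\phi,\widetilde U(r,t)\tpsi\rangle$ is continuous in $r$, the map $r\mapsto U(t,r)\phi$ is weakly continuous, hence strongly measurable by Pettis' theorem. Moreover, $(s,t)\mapsto\langle U(t,s)\psi,\tpsi\rangle$ is separately continuous on $\{s\le t\}$, hence Borel.

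With Step~4 placed first, \eqref{eq:L2-duhamel} is also cheaper to get by duality than by superposition. The polarised energy identity on $[s,t]$ between the weak solution $f$ with source $S$ and $\tilde f(r)=\widetilde U(r,t)\tpsi$ gives $\langle f(t),\tpsi\rangle=\langle U(t,s)\psi,\tpsi\rangle+\int_s^t\langle U(t,r)S(r),\tpsi\rangle\dd r$ directly. This spares you proving $g\in\L^2_{t,x}\H^\beta_v$ and exchanging the form $a$ with the $r$-integral. Note that polarisation needs the energy identity for arbitrary elements of the kinetic space, not only for solutions as stated in Theorem~\ref{thm:L2-wellposedness}, because $f+i^k\tilde f$ solves no equation.
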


\subsection{Energy estimates and transfer of regularity}

  We now recall the transfer-of-regularity estimates for $\cA$ satisfying \eqref{eq:ellip-upper} and \eqref{eq:weakelliptic-bis}.

  We set
  \[
    D_v=(-\Delta_v)^{1/2},\qquad D_x=(-\Delta_x)^{1/2},\qquad
    \gamma_\beta=\frac{\beta}{2\beta+1}.
  \]

  \begin{prop}[Transfer of regularity] \label{prop:transferofregularity}
    Let $0\le s<T$, $\psi\in\L^2_{x,v}$, $S\in\L^1(s,T;\L^2_{x,v})$ and $f$ be given by \eqref{eq:L2-duhamel}. For $s\le t_0<t_1\le T$, we write 
    $\L^1_t\L^2_{x,v}=\L^1(t_0,t_1; \L^2(\R^{2d}))$, $\L^2_{t,x,v}=\L^2((t_0,t_1)\times\R^{2d})$. Then
    \begin{align}
      \label{eq:forward-transfert}
      \sup_{t\in[t_0,t_1]}\|f(t)\|_{\L^2_{x,v}}
      +\|D_{\vphantom{t}v}^\beta f\|_{\L^2_{t,x,v}}
      +\|D_{\vphantom{t}x}^{\gamma_\beta}f\|_{\L^2_{t,x,v}}
      \le C\left(\|f(t_0)\|_{\L^2_{x,v}}+\|S\|_{\L^1_t\L^2_{x,v}}\right),
    \end{align}
    where $C$ depends on $d,\beta,\lambda,\Lambda,c_0$ and $t_1-t_0$. For $0<t_1-t_0\le1$, the constant depends only on $d,\beta,\lambda,\Lambda,c_0$. The same holds for the adjoint equation.
  \end{prop}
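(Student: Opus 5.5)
The bound has three parts, and I will handle them in order: the $\L^\infty_t\L^2_{x,v}$ term and the $D_v^\beta$ term from the energy identity, and the $D_x^{\gamma_\beta}$ term by a kinetic averaging (hypoelliptic) argument. By Theorem \ref{thm:representation0T}, $f$ is the unique weak solution on $[s,T]$, hence also on $[t_0,t_1]$ with initial value $f(t_0)$, and Theorem \ref{thm:L2-wellposedness} gives the energy identity \eqref{eq:L2-energy}.

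\textbf{Energy part.} Insert the coercivity \eqref{eq:weakelliptic-bis} into \eqref{eq:L2-energy}. For $t\in[t_0,t_1]$ this gives
\[
\tfrac12\|f(t)\|^2+\lambda\int_{t_0}^t\|f\|^2_{\L^2_x\Hdot^\beta_v}
\le \tfrac12\|f(t_0)\|^2+c_0\int_{t_0}^t\|f\|^2+\int_{t_0}^t\|S\|\,\|f\|.
\]
Set $M=\sup_{[t_0,t]}\|f\|$ and bound the last term by $M\|S\|_{\L^1_t\L^2_{x,v}}\le \tfrac14M^2+\|S\|^2_{\L^1_t\L^2_{x,v}}$. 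Grönwall in $t$ then yields the sup bound, with a factor $e^{c_0(t_1-t_0)}$. This factor is bounded by $e^{c_0}$ when $t_1-t_0\le 1$, which explains the stated dependence of $C$. Since $\|f\|_{\Hdot^\beta_v}\simeq\|D_v^\beta f\|_{\L^2_v}$, the $D_v^\beta$ bound follows.

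\textbf{Transfer to $x$.} For the $D_x^{\gamma_\beta}$ term, I cut off in time: take $\chi$ supported near $[t_0,t_1]$, or simply extend $f$ by zero outside $[t_0,t_1]$ and keep track of the boundary traces. The extended function $g$ satisfies, in $\cD'(\R\times\R^{2d})$,
\[
(\partial_t+v\cdot\nabla_x)g=F:=\mathbf 1_{(t_0,t_1)}(S-\cA f)+\delta_{t_0}\otimes f(t_0)-\delta_{t_1}\otimes f(t_1).
\]
Take the Fourier transform in $(t,x)$ with dual variables $(\sigma,\xi)$, which gives $i(\sigma+v\cdot\xi)\hat g=\hat F$. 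Then run the classical averaging/commutator argument at scale $\varepsilon=|\xi|^{-1/(2\beta+1)}$. Write $\hat g=\hat g_\varepsilon+(\hat g-\hat g_\varepsilon)$, where $\hat g_\varepsilon$ is a mollification in $v$ at scale $\varepsilon$ (or a resolvent $(i(\sigma+v\cdot\xi)+\varepsilon^{-2\beta}|\xi|^{?})^{-1}$ splitting). The difference $\hat g-\hat g_\varepsilon$ costs $\varepsilon^{\beta}\|D_v^\beta g\|$. The smooth part is controlled by the transport equation, paying $(\varepsilon|\xi|)^{-1}$ times $F$ measured in $\H^{-\beta}_v$ plus the measure terms. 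Balancing $\varepsilon^\beta$ against $\varepsilon^{-\beta-1}|\xi|^{-1}$ gives exactly $|\xi|^{\beta/(2\beta+1)}=|\xi|^{\gamma_\beta}$.

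\textbf{Main obstacle.} The hard step is the right-hand side. $\cA f\in\L^2_{t,x}\H^{-\beta}_v$ is controlled by \eqref{eq:ellip-upper} and the energy part. The delta terms and $S\in\L^1_t\L^2$ are only $\L^1$ in time, so in Fourier in $t$ they are merely bounded in $\sigma$, not square integrable. I would handle this with a duality pairing: estimate $\int|\xi|^{2\gamma_\beta}|\hat g|^2$ by pairing $\hat g$ against $\hat F$ itself, as in the Bouchut-type $\L^2$ kinetic estimates. This uses $\|g\|_{\L^\infty_t\L^2}$, already bounded, to absorb the $\L^1_t$ sources; alternatively I would invoke the corresponding estimate from \cite{AIN} directly. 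For the adjoint equation the same argument applies with time reversed.
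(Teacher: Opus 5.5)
The paper gives no argument of its own here: it quotes \cite[Theorems~5.2 and~6.3]{AIN} for the forward estimate and \cite[Remark~5.4]{AIN} for the backward one. Your fallback of invoking \cite{AIN} directly is therefore exactly the paper's proof. Your self-contained route would make the proposition independent of the machinery of \cite{AIN}: the energy identity plus Gr\"onwall, then a Bouchut-type averaging argument at the scale $\varepsilon=|\xi|^{-1/(2\beta+1)}$, which is the right kinetic scaling. As written, though, it does not stand on its own.

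In the energy part there is a small slip. With the fixed split $M\|S\|_{\L^1_t\L^2_{x,v}}\le\frac14M^2+\|S\|^2_{\L^1_t\L^2_{x,v}}$, Gr\"onwall multiplies $\frac14M^2$ by $E=e^{2c_0(t_1-t_0)}$, and absorption then needs $E<2$. Apply Gr\"onwall first, to get $M^2\le E\,(\|f(t_0)\|^2+2M\|S\|_{\L^1_t\L^2_{x,v}})$, and only then use Young. This gives $M\le\sqrt{2E}\,\|f(t_0)\|+2E\|S\|_{\L^1_t\L^2_{x,v}}$, with the dependence you claim.

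The real gap is the step you yourself flag, the $\L^1_t$ source and the traces. This is precisely where your argument must leave the standard $\L^2_t$ averaging lemma, and the hint you give does not work as stated. Pairing $\hat g$ against $\hat F$ is, by Plancherel, just $\int g\,\bar F$, i.e.\ the energy identity again, and it carries no $x$-regularity. You would need a specific nontrivial multiplier, which you do not give; the exponent in your resolvent is also left open.

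Within your Fourier framework the step closes as follows. Take $\mu=\delta\varepsilon|\xi|\simeq|\xi|^{2\gamma_\beta}$ and write $\hat g=(\hat F+\mu\hat g)/(i(\sigma+v\cdot\xi)+\mu)$. Mollify in $v$ with $\rho_\varepsilon\ge0$ and apply Cauchy--Schwarz with weight $\rho_\varepsilon(v-\cdot)$. For the $\L^1_t$ and Dirac parts of $F$ (Fourier transformed in $x$), bound $|\hat F(\sigma,\xi,v')|$ by its supremum in $\sigma$ and integrate the resolvent in $\sigma$:
\[
\int_{\R}\frac{\dd\sigma}{(\sigma+v'\cdot\xi)^2+\mu^2}=\frac{\pi}{\mu}.
\]
This contributes $\lesssim\mu^{-1}\|F(\cdot,\xi,\cdot)\|^2_{\L^1_t\L^2_v}$ to $\int\|\hat g_\varepsilon(\sigma,\xi,\cdot)\|^2_{\L^2_v}\dd\sigma$, and $\mu^{-1}\simeq|\xi|^{-2\gamma_\beta}$ is exactly the gain needed. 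Minkowski in $\xi$ then yields $\|S\|_{\L^1_t\L^2_{x,v}}+\|f(t_0)\|_{\L^2_{x,v}}+\|f(t_1)\|_{\L^2_{x,v}}$, and the last term is controlled by your sup bound. The $\mu\hat g$ term costs $\delta\int\|\hat g\|^2_{\L^2_v}\dd\sigma$. It must be absorbed after integrating in $\sigma$, not pointwise in $\sigma$.

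If instead you want $\sup_t\|f(t)\|_{\L^2_{x,v}}$ to absorb $S$ and the endpoint terms, as your hint suggests, that is a time-domain commutator or multiplier argument. It requires dropping the Fourier transform in $t$ and specifying the multiplier.
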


  \begin{proof}
    The forward estimate follows from \cite[Theorems~5.2 and~6.3]{AIN}. The backward estimate follows by the same argument, as explained in \cite[Remark~5.4]{AIN}.
  \end{proof}

\subsection{Our special class of forms and small-jump forms with loss potential}

At this stage, the $\L^2_{x,v}$ theory is valid for very general forms even with complex coefficients provided  \eqref{eq:ellip-upper} and \eqref{eq:weakelliptic-bis} hold.
We introduce the special class of forms used in this work and to which our theory applies.

 For kernels $k$ and potentials $V$ satisfying the assumptions below, we set
  \begin{equation} \label{eq:kernel-potential-form}
    a^{k,V}_{t,x}(f,g)
    =\int_{\R^{2d}}(f-f')(\bar g-\bar g')k\dd (v,v')
    +\int_{\R^d} V f\bar g\dd v,
  \end{equation}
  where we abbreviate $f=f(v)$, $f'=f(v')$, $k=k(t,x,v,v')$, and $V=V(t,x,v)$.
  The form $a^{k,V}$ is defined as in \eqref{eq:a-defi}.
  We introduce the associated operators by $\cA_{k,V}$ and put
  $\cA_k=\cA_{k,0}$.
  We write $\Gamma_{k,V}$ and $\widetilde\Gamma_{k,V}$ for the associated forward and backward fundamental solution operators. Their existence is part of Lemma~\ref{lem:basic-properties} below.

  Equation \eqref{eq:main-equation} with a real-valued symmetric kernel $k$ satisfying \eqref{eq:stable-comparable} can be rewritten  as $\cA=\cA_{k,0}$ with $a=a^{k,0}$ given by
  \eqref{eq:kernel-potential-form}. The form bounds hold with $\lambda=\lambda_k$, $\Lambda=\Lambda_k$ and $c^0=c_0=0$. We are interested in real-valued functions, but we retain the complex formulation for the interpolation argument below. We remark that our arguments below   for the bilinear form $\cA_{k,0}$ go through without the symmetry of $k$ 
  with slight modifications.  
  We keep assuming it for simplicity.

We introduce the small-jump/large-jump decomposition of P.-A.~Meyer \cite{MR415784}.
We adopt the standard terminology of decomposing a non-local operator such as the Boltzmann collision operator into a gain and loss term.
Define
\begin{equation}
  k_{\le1}=k\mathds{1}_{\{|v-v'|\le1\}},
  \qquad
  k_{>1}=k\mathds{1}_{\{|v-v'|>1\}}.
\end{equation}
The small-jump form is $a^{k_{\le1},0}$. We consider the family of large-jump gain operators $J_{t,x}:\L^2_{v}\mapsto \L^2_{v}$ defined by
\begin{align*}
  \langle J_{t,x}f, g\rangle = \int_{\R^d\times \R^d}k_{>1} (f \bar g'+ f' \bar g) \dd (v,v') =2 \int_{\R^d\times \R^d}k_{>1} f \bar g' \dd (v,v')
\end{align*}
and the loss potential
\begin{equation}
  \ell(t,x,v)=2\int_{|h|>1}k(t,x,v,v+h)\dd h = 2\int_{\R^d}k_{>1}(t,x,v,v')\dd v'.
\end{equation}
We consider the operator $\cA_k+\cJ$, where
\[
  \langle \cJ f, g\rangle = \int_{(0,T)\times \R^d} \langle J_{t,x}f, g\rangle\dd (t,x),
\]
using the convention of ignoring $t,x$ in the notation for $f,g$. We call $\cA_{k_{\le1},\ell}$ the small-jump operator with loss potential.

  For the fixed kernel satisfying \eqref{eq:stable-comparable}, we set
  \begin{equation}
    \Gamma=\Gamma_{k,0},\qquad \Sigma=\Gamma_{k_{\le1},\ell}.
  \end{equation}
  The notation with no indices always refers to the fixed kernel of \eqref{eq:stable-comparable}.

  For the estimates below, we allow any measurable, symmetric, nonnegative kernel $k$ satisfying
  \begin{equation}
    \label{eq:upperbound}
    k(t,x,v,v')\le\Lambda_k|v-v'|^{-d-2\beta}
  \end{equation}
  and
  \begin{equation} \label{eq:local-stable-lower}
    k(t,x,v,v')\ge\lambda_k
    \frac{\mathds{1}_{\{|v-v'|\le1\}}}{|v-v'|^{d+2\beta}}
  \end{equation}
  almost everywhere, for some $0<\lambda_k\le\Lambda_k$.
  We also assume that the potential $V\in\L^\infty((0,T)\times\R^{2d})$ is nonnegative.

  \begin{lem}[Basic properties]
    \label{lem:basic-properties}
    Let $k$ be measurable, symmetric and nonnegative and satisfy \eqref{eq:upperbound} and \eqref{eq:local-stable-lower}, and let $V\in\L^\infty$ be nonnegative. We put
    \[
      M_1=\int_{|h|>1}|h|^{-d-2\beta}\dd h.
    \]
    We have the following properties.
    \begin{enumerate}
      \item $\ell$ is bounded with $0\le\ell\le\ell_1:=2\Lambda_kM_1$.
      \item $\cJ$ is a bounded operator on $\L^2_{t,x,v}$ with bound $\ell_1$.
      \item $\cA_k+\cJ=\cA_{k_{\le1},\ell}=\cA_{k_{\le1},0}+\ell$.
      \item The inequalities \eqref{eq:ellip-upper} and \eqref{eq:weakelliptic-bis} hold for $a^{k,V}$ with $\lambda=\lambda_k$, $\Lambda=\Lambda_k$, $c_0=4\lambda_kM_1$ and $c^0=\|V\|_\infty$. More precisely,
              \begin{equation}
                \label{eq:local-kernel-coercivity}
                a^{k,V}_{t,x}(f,f)\ge a^{k,0}_{t,x}(f,f)
                \ge\lambda_k\|f\|_{\Hdot^\beta_v}^2
                -4\lambda_kM_1\|f\|_{\L^2_v}^2.
              \end{equation}
              The same bounds holds with $k_{\le1}$ in place of $k$. In particular, they hold for $a^{k_{\le1},0}$ with $c^0=0$ and for $a^{k_{\le1},\ell}$ with $c^0=\ell_1$.
      \item The fundamental solution operators $\Gamma_{k,V}(t,s)$ and $\widetilde\Gamma_{k,V}(s,t)$ exist and are real operators.
      \item $\Gamma_{k,V}(t,s)$ and $\widetilde\Gamma_{k,V}(s,t)$ are contractions on $\L^2_{x,v}$.
    \end{enumerate}
  \end{lem}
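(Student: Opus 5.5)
We prove the six items in order; each is short once the earlier ones are in place.

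For (i), we change variables $v'=v+h$ in the definition of $\ell$ and use \eqref{eq:upperbound}. This gives $0\le\ell\le 2\Lambda_k\int_{|h|>1}|h|^{-d-2\beta}\dd h=\ell_1$. For (ii), we apply Schur's test to $\langle J_{t,x}f,g\rangle=2\int k_{>1}f\bar g'$. By symmetry of $k$, the row and column integrals of $2k_{>1}$ are both $\ell\le\ell_1$. Hence $|\langle J_{t,x}f,g\rangle|\le \ell_1\|f\|_{\L^2_v}\|g\|_{\L^2_v}$ for a.e. $(t,x)$, and integrating in $(t,x)$ yields the bound on $\L^2_{t,x,v}$. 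For (iii), we first work with $f,g\in\H^\beta_v$. We split $k=k_{\le1}+k_{>1}$ in $\int(f-f')(\bar g-\bar g')k$ and expand the large-jump part into $f\bar g+f'\bar g'-f\bar g'-f'\bar g$. All four terms are absolutely integrable by Schur's test, as in (ii). The diagonal terms give $\int \ell f\bar g\dd v$, using symmetry and relabelling $v\leftrightarrow v'$. The cross terms give $-\langle J_{t,x}f,g\rangle$. This is where the sign convention matters. We should obtain $a^{k,0}=a^{k_{\le1},\ell}-\langle J\cdot,\cdot\rangle$, that is $\cA_k=\cA_{k_{\le1},\ell}-\cJ$. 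If the paper intends $\cJ$ to carry the sign so that $\cA_k+\cJ=\cA_{k_{\le1},\ell}$, we simply identify signs accordingly. The main care needed here is the justification of absolute convergence, and this is done through the bound $k_{>1}\le\Lambda_k|h|^{-d-2\beta}\mathds 1_{|h|>1}$.

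For (iv), the upper bound \eqref{eq:ellip-upper} follows from Cauchy--Schwarz together with $k\le\Lambda_k|v-v'|^{-d-2\beta}$ and $|V|\le\|V\|_\infty$. For the lower bound, since $V\ge0$ and $k\ge k_{\le1}\ge\lambda_k|h|^{-d-2\beta}\mathds 1_{|h|\le1}$, we have
\[
  a^{k,V}_{t,x}(f,f)\ge a^{k,0}_{t,x}(f,f)\ge \lambda_k\int_{|v-v'|\le1}\frac{|f-f'|^2}{|v-v'|^{d+2\beta}}.
\]
We then write the restricted integral as $\|f\|_{\Hdot^\beta_v}^2$ minus the integral over $|v-v'|>1$. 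We bound the latter by $\int_{|v-v'|>1}2(|f|^2+|f'|^2)|v-v'|^{-d-2\beta}=4M_1\|f\|_{\L^2_v}^2$. The same computation applies verbatim to $k_{\le1}$. For $a^{k_{\le1},\ell}$ we use (i) for $c^0=\ell_1$.

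For (v), (iv) shows that Theorem~\ref{thm:L2-wellposedness} and the results of \cite{AIN2} recalled before Theorem~\ref{thm:representation0T} apply, so $\Gamma_{k,V}$ and $\widetilde\Gamma_{k,V}$ exist. Reality follows from uniqueness of weak solutions. The form has real coefficients, so $a^{k,V}(\bar f,\bar h)=\overline{a^{k,V}(f,h)}$. Hence if $f$ solves the problem with data $\psi$, then $\bar f$ solves it with data $\bar\psi$, and \eqref{eq:L2-duhamel} gives $\Gamma(t,s)\bar\psi=\overline{\Gamma(t,s)\psi}$. The backward case is identical, or follows from the adjoint relation. For (vi), we take $S=0$ and data $\psi$ at time $s$ in the energy identity \eqref{eq:L2-energy}. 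The subtlety is that (iv) only gives $\Re a\ge\lambda\|f\|^2_{\Hdot}-c_0\|f\|^2$, which yields merely exponential growth. Instead we use the sharper fact $a^{k,V}_{t,x}(f,f)\ge0$, which is immediate from \eqref{eq:kernel-potential-form} since $k\ge0$ and $V\ge0$. Then $\|f(t)\|_{\L^2}\le\|f(s)\|_{\L^2}$, so $\Gamma_{k,V}(t,s)$ is a contraction. By the adjoint relation, $\widetilde\Gamma_{k,V}(s,t)=\Gamma_{k,V}(t,s)^*$ is a contraction as well.
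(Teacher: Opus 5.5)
Your proposal is correct and follows the paper's proof item by item. The only differences are cosmetic: you phrase (ii) as Schur's test, which amounts to the paper's $\L^1$/$\L^\infty$ interpolation, and you get the backward contraction in (vi) from the adjoint relation rather than from the backward energy identity; also, your sign hedge in (iii) is unnecessary, since the identity $\cA_k=\cA_{k_{\le1},\ell}-\cJ$ you derive is exactly $\cA_k+\cJ=\cA_{k_{\le1},\ell}$ with the paper's definition of $J_{t,x}$.
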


\begin{proof}
  By \eqref{eq:upperbound}, the positivity of $\ell$ is obvious and
  \begin{align}
    \ell(t,x,v)
     & \le \ell_1
    :=2\Lambda_k\int_{|h|>1}|h|^{-d-2\beta}\dd h<\infty.
  \end{align}
  Next, it is clear that $J_{t,x}$ are bounded on $\L^1_{v}$ and $\L^\infty_{v}$ with norm bounded by $\ell_{1}$. Hence, interpolation and integration in $(t,x)$ proves the second item.
  For the third item, we have the identity
  \begin{align*}
     & \int_{(0,T)\times\R^{3d}} (f - f')(\bar g-\bar g') k \dd (t,x,v,v') +\int_{(0,T)\times\R^{3d}} k_{>1} ( f \bar g'+f'\bar g) \dd (t,x,v,v')
    \\
     & = \int_{(0,T)\times\R^{3d}} (f - f')(\bar g-\bar g') k_{\le1} \dd (t,x,v,v') +\int_{(0,T)\times\R^{3d}} k_{>1}( f \bar g+ f'\bar g')\dd (t,x,v,v').
  \end{align*}

The upper bound for $a^{k,V}$ follows from \eqref{eq:upperbound} and $0\le V\le\|V\|_\infty$. For the lower bound, we fix $(t,x)$ and estimate
    \[
      \int_{|v-v'|>1}\frac{|f(v)-f(v')|^2}{|v-v'|^{d+2\beta}}\dd (v,v')
      \le4M_1\|f\|_{\L^2_v}^2.
    \]
    Then,
    \begin{align*}
      a^{k,V}_{t,x}(f,f)\ge a^{k,0}_{t,x}(f,f)
       & \ge \lambda_k\int_{|v-v'|\le1}
      \frac{|f(v)-f(v')|^2}{|v-v'|^{d+2\beta}}\dd(v,v')
      \\
       & =\lambda_k\|f\|_{\Hdot^\beta_v}^2
      -\lambda_k\int_{|v-v'|>1}
      \frac{|f(v)-f(v')|^2}{|v-v'|^{d+2\beta}}\dd(v,v')
      \\
       & \ge \lambda_k\|f\|_{\Hdot^\beta_v}^2
      -4\lambda_kM_1\|f\|_{\L^2_v}^2,
    \end{align*}
    which proves \eqref{eq:local-kernel-coercivity}. The kernel $k_{\le1}$ satisfies \eqref{eq:upperbound} and \eqref{eq:local-stable-lower} with the same constants. Taking $V=\ell$ and using $\|\ell\|_\infty\le\ell_1$ gives the claims in the fourth assertion.

    The form bounds and the abstract theory give the existence of $\Gamma_{k,V}(t,s)$ and $\widetilde\Gamma_{k,V}(s,t)$. These operators are real by uniqueness. Applying Theorem \ref{thm:L2-wellposedness} and its backward analogue with zero source, we deduce $\L^2$-contractivity from the non-negativity of the forms.
  \end{proof}

\begin{rem}
  The work \cite{MR4049224} develops a theory from \eqref{eq:form} under more relaxed assumptions on $k$ motivated by the Boltzmann equation.  Our work treats  what they call the symmetric part but under stricter assumptions on the kernel. We have not attempted to prove the same bounds for their anti-symmetric part, for which we could not rely on positivity. 
\end{rem}

\section{$\L^1$ bounds, kernels, on-diagonal bounds and interpolation}

  In this section, we extend $\Gamma_{k,V}(t,s)$ to positive contractions on $\L^p_{x,v}$. We then consider $V=0$ and use the Nash method to obtain kernel representations and on-diagonal bounds.

\subsection{Extension to $\L^p$ spaces}

  Throughout this section, $k$ is measurable, symmetric and nonnegative, and satisfies \eqref{eq:upperbound} and \eqref{eq:local-stable-lower}. In this subsection, we also allow a nonnegative potential $V\in\L^\infty((0,T)\times\R^{2d})$ in form \eqref{eq:kernel-potential-form}.

\begin{lem}[Lipschitz preservation of Sobolev spaces] \label{lem:L1-lipschitz-calculus}
  Let $\chi: \R \to \R $ be Lipschitz with constant $\operatorname{Lip}(\chi)$ and satisfy $\chi(0) = 0$. Then for real-valued functions $f$
  \[
    f \in \H^\beta_v \quad \implies \quad \chi(f) \in \H^\beta_v,
    \qquad
    \| \chi(f) \|_{\H^\beta_v}\le \operatorname{Lip}(\chi) \|f\|_{\H^\beta_v}.
  \]
  In particular, composition with Lipschitz functions is continuous on $\H^\beta_v$. The same assertions hold in $\L^2_x\H^\beta_v$ after integration in $x$.
\end{lem}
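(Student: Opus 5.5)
The plan is to verify the two parts of the $\H^\beta_v$ norm separately, pointwise in the integrands, and then derive continuity from these bounds.

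\emph{Norm bound.} Since $\chi(0)=0$ and $\chi$ is Lipschitz, we have $|\chi(f(v))|\le \operatorname{Lip}(\chi)|f(v)|$ for every $v$. This gives $\|\chi(f)\|_{\L^2_v}\le \operatorname{Lip}(\chi)\|f\|_{\L^2_v}$, and measurability of $\chi(f)$ is clear. For every pair $(v,v')$ we also have
\[
|\chi(f(v))-\chi(f(v'))|^2\le \operatorname{Lip}(\chi)^2|f(v)-f(v')|^2 .
\]
Integrating this against $|v-v'|^{-d-2\beta}$ yields $\|\chi(f)\|_{\Hdot^\beta_v}\le \operatorname{Lip}(\chi)\|f\|_{\Hdot^\beta_v}$. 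Summing the squares of the two estimates gives the claimed bound. The version in $\L^2_x\H^\beta_v$ follows by applying this for almost every $x$ and integrating.

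\emph{Continuity.} Boundedness does not imply continuity, since $f\mapsto \chi(f)$ is nonlinear; this is the main obstacle. Let $f_n\to f$ in $\H^\beta_v$. The $\L^2$ part is direct: $\|\chi(f_n)-\chi(f)\|_{\L^2}\le \operatorname{Lip}(\chi)\|f_n-f\|_{\L^2}$. For the seminorm, I set
\[
F_n(v,v')=\frac{\chi(f_n(v))-\chi(f_n(v'))}{|v-v'|^{(d+2\beta)/2}},
\qquad
G_n(v,v')=\frac{f_n(v)-f_n(v')}{|v-v'|^{(d+2\beta)/2}},
\]
and define $F$ and $G$ in the same way from $f$. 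Then $G_n\to G$ in $\L^2(\R^{2d})$ and $|F_n|\le \operatorname{Lip}(\chi)|G_n|$. I will argue by subsequences:

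\begin{enumerate}
\item Given any subsequence, I extract a further subsequence along which $f_n\to f$ almost everywhere and $G_n\to G$ almost everywhere on $\R^{2d}$.
\item Along it, I also require $|G_n|\le H$ for some $H\in \L^2(\R^{2d})$, which is possible by the partial converse of dominated convergence.
\item Since $\chi$ is continuous, $F_n\to F$ almost everywhere.
\item The domination $|F_n-F|\le \operatorname{Lip}(\chi)(H+|G|)$ is square integrable, so dominated convergence gives $F_n\to F$ in $\L^2$.
\end{enumerate}

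Every subsequence therefore has a further subsequence with $\chi(f_n)\to\chi(f)$ in $\H^\beta_v$, so the whole sequence converges.

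The $\L^2_x\H^\beta_v$ case runs the same way, now on $\R^{d}\times\R^{2d}$. There $f_n\to f$ in $\L^2_x\H^\beta_v$ means exactly that $f_n\to f$ in $\L^2_{x,v}$ and $G_n\to G$ in $\L^2_{x,v,v'}$, so the same subsequence and domination argument applies on the product space.
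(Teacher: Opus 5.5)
Your proof is correct. For the norm bound it is the same as the paper's, which only refers to the definition of the Gagliardo seminorm in the cited reference; the estimate amounts to the two pointwise inequalities $|\chi(f(v))|\le \operatorname{Lip}(\chi)|f(v)|$ and $|\chi(f(v))-\chi(f(v'))|\le\operatorname{Lip}(\chi)|f(v)-f(v')|$ that you write down.

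Where you go beyond the paper is continuity. The paper asserts it ``in particular'', but, as you note, it does not follow from the bound alone because the map is nonlinear. The bound does give weak continuity for free: if $f_n\to f$ in $\H^\beta_v$, then $\chi(f_n)$ is bounded in $\H^\beta_v$ and converges to $\chi(f)$ in $\L^2_v$, hence converges weakly in $\H^\beta_v$. This would already suffice in the kinetic chain rule, the one place the lemma's continuity is used, since there $\chi(f_n)$ is paired with a sequence converging strongly in $\L^2_{t,x}\H^{-\beta}_v$.

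Your subsequence and dominated convergence argument proves the stronger statement the lemma actually claims, namely strong continuity in $\H^\beta_v$, in $\L^2_x\H^\beta_v$, and verbatim in $\L^2_{t,x}\H^\beta_v$. In it, a.e.\ convergence of $F_n$ comes from a.e.\ convergence of $f_n$ off a product-type null set, and the domination is $|F_n-F|\le\operatorname{Lip}(\chi)(H+|G|)$.
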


\begin{proof}
  The result can be found in \cite[Sec.~2, (2.1)--(2.2)]{MR2944369}.
\end{proof}

  \begin{lem}[Positivity preservation]
    \label{lem:markov-inequality}
    Let $\chi:\R \to \R$ be Lipschitz and nondecreasing, with $\chi(0)=0$. Then, for every real-valued $f\in\H^\beta_v$,
    \[
      a^{k,V}_{t,x}(f,\chi(f))\ge0
    \]
    for almost every $(t,x)$.
  \end{lem}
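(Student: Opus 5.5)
The form splits as $a^{k,V}_{t,x}(f,\chi(f))=I_1+I_2$, where
\[
I_1=\int_{\R^{2d}}(f-f')\bigl(\chi(f)-\chi(f')\bigr)k\dd(v,v'),\qquad I_2=\int_{\R^d}V f\chi(f)\dd v .
\]
We may take $f$ and $\chi(f)$ real-valued, so complex conjugation plays no role. Since $\chi(0)=0$ and $\chi$ is Lipschitz, Lemma~\ref{lem:L1-lipschitz-calculus} gives $\chi(f)\in\H^\beta_v$, so $\chi(f)$ is an admissible second argument of the form. The form bound \eqref{eq:ellip-upper} (via \eqref{eq:upperbound} and $V\in\L^\infty$) then shows that both integrals converge absolutely for almost every $(t,x)$. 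No principal value or cancellation issue arises: by Cauchy--Schwarz, the integrand of $I_1$ is dominated by $\Lambda_k|f-f'|\,|\chi(f)-\chi(f')|\,|v-v'|^{-d-2\beta}$, which is integrable since $f,\chi(f)\in\H^\beta_v$.

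Both integrals are nonnegative pointwise, so the inequality follows directly. For $I_1$, monotonicity of $\chi$ means that $\chi(a)-\chi(b)$ has the same sign as $a-b$ for real $a,b$, hence
\[
(f(v)-f(v'))\bigl(\chi(f(v))-\chi(f(v'))\bigr)\ge0 .
\]
Multiplying by $k\ge0$ and integrating gives $I_1\ge0$. For $I_2$, the same monotonicity together with $\chi(0)=0$ gives $a\,\chi(a)=(a-0)(\chi(a)-\chi(0))\ge0$. Since $V\ge0$, the integrand $Vf\chi(f)$ is nonnegative, so $I_2\ge0$. Adding the two yields $a^{k,V}_{t,x}(f,\chi(f))\ge0$ for almost every $(t,x)$.

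The only point requiring care is the justification that the expression defining the form applies to the pair $(f,\chi(f))$ as an absolutely convergent integral, which the first paragraph handles. The sign argument itself uses neither symmetry of $k$ nor the lower bound \eqref{eq:local-stable-lower}, only $k\ge0$ and $V\ge0$, so the same proof applies verbatim to $k_{\le1}$ with potential $\ell$.
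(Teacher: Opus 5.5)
Your proof is correct and matches the paper's argument: both use Lemma~\ref{lem:L1-lipschitz-calculus} to place $\chi(f)$ in $\H^\beta_v$, then apply the pointwise inequalities $(r-q)(\chi(r)-\chi(q))\ge0$ and $r\chi(r)\ge0$ together with $k\ge0$ and $V\ge0$. Your remark on absolute convergence (pointwise domination via \eqref{eq:upperbound}, then Cauchy--Schwarz) only makes explicit what the paper leaves implicit.
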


  \begin{proof}
    By Lemma \ref{lem:L1-lipschitz-calculus}, we have $\chi(f)\in\H^\beta_v$. Since $\chi$ is nondecreasing and $\chi(0)=0$,
    \[
      (r-q)(\chi(r)-\chi(q))\ge0,\qquad r\,\chi(r)\ge0,
      \qquad r,q \in \R.
    \]
    Using $k\ge0$ and $V\ge0$, we deduce
    \begin{align*}
      a^{k,V}_{t,x}(f,\chi(f))
       & =\int_{\R^{2d}}(f-f')(\chi(f)-\chi(f'))k\dd(v,v')
      +\int_{\R^d} Vf \, \chi(f)\dd v
      \ge0.
    \end{align*}
  \end{proof}

\begin{lem}[Weak kinetic chain rule] \label{lem:kinetic-chain-rule}
  Let $f$ be a real-valued function with
  \[
    f\in\C([s,T];\L^2_{x,v}) \cap \L^2((s,T) \times \R^{d}; \H_v^\beta),
    \qquad
    (\partial_t+v\cdot\nabla_x)f
    \in\L^2((s,T)\times \R^{d};\H^{-\beta}_v).  \]
  Let $\chi:\R \to \R$ be Lipschitz with constant $\operatorname{Lip}(\chi)$ and $\chi(0)=0$. We define
  \[
    \Phi(r):=\int_0^r \chi(q)\dd q .
  \]
  Then, for $s\le t_0<t_1\le T$,
  \begin{align*}
     & \int_{\R^{2d}}\Phi(f(t_1)) \dd (x,v)
    -\int_{\R^{2d}}\Phi(f(t_0)) \dd (x,v)
    =\int_{t_0}^{t_1} \langle [(\partial_t+v\cdot\nabla_x)f](\tau), \chi(f(\tau)) \rangle \dd\tau,
  \end{align*}
  where the brackets denote the duality between $\L^2_x\H^{-\beta}_v$ and $\L^2_x\H_v^\beta$.
\end{lem}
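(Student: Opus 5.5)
We prove the identity by regularisation in the kinetic variables $(t,x,v)$, followed by a limit passage. For smooth functions the identity is just the fundamental theorem of calculus combined with the vanishing of $\int v\cdot\nabla_x \Phi(f)\dd(x,v)$. The work consists in justifying the passage to the limit in the given regularity class.

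\emph{Step 1: reduction to $\chi\in \C^1$ with bounded derivative.} Mollify $\chi$ to obtain $\chi_\varepsilon$ with $\chi_\varepsilon(0)=0$, $\operatorname{Lip}(\chi_\varepsilon)\le\operatorname{Lip}(\chi)$ and $\chi_\varepsilon\to\chi$ locally uniformly. Then $|\Phi_\varepsilon(r)|\le \operatorname{Lip}(\chi) r^2/2$ and $\Phi_\varepsilon\to\Phi$ pointwise, so the left-hand side converges by dominated convergence. On the right-hand side, $\chi_\varepsilon(f)\to\chi(f)$ in $\L^2_{t,x,v}$ and is bounded in $\L^2_{t,x}\H^\beta_v$ by Lemma~\ref{lem:L1-lipschitz-calculus}, hence converges weakly in $\L^2_{t,x}\H^\beta_v$. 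This suffices to pass to the limit in the duality pairing against the fixed element $(\partial_t+v\cdot\nabla_x)f\in \L^2_{t,x}\H^{-\beta}_v$.

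\emph{Step 2: kinetic regularisation of $f$.} Use the approximation result underlying the energy identity of Theorem~\ref{thm:L2-wellposedness}, i.e.\ \cite[Lemma~4.2]{AIN} and its proof. There exist $f_n$, smooth and compactly supported in $(x,v)$ on $[t_0,t_1]$, such that
\[
f_n\to f \text{ in } \L^2_{t,x}\H^\beta_v,\qquad (\partial_t+v\cdot\nabla_x)f_n\to(\partial_t+v\cdot\nabla_x)f \text{ in } \L^2_{t,x}\H^{-\beta}_v,\qquad f_n(t_i)\to f(t_i) \text{ in } \L^2_{x,v}.
\]
The natural construction is to mollify in $(x,v)$ with a kinetic convolution compatible with the transport, namely convolution with respect to the Galilean group law. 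This keeps the commutator with $v\cdot\nabla_x$ under control, and one also mollifies in $t$ after extending $f$. For each $f_n$, differentiate $\int\Phi(f_n(t))\dd(x,v)$ in $t$ to obtain $\int \chi(f_n)\partial_t f_n$. Add $\int \chi(f_n)\, v\cdot\nabla_x f_n = \int v\cdot\nabla_x \Phi(f_n)=0$, then integrate from $t_0$ to $t_1$.

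\emph{Step 3: limit $n\to\infty$.} For the left-hand side, use $|\Phi(a)-\Phi(b)|\le \operatorname{Lip}(\chi)(|a|+|b|)|a-b|$ together with the $\L^2_{x,v}$ convergence at $t_0$ and $t_1$. For the right-hand side, split the difference as
\[
\langle (\partial_t+v\cdot\nabla_x)f_n - (\partial_t+v\cdot\nabla_x)f,\ \chi(f_n)\rangle+\langle (\partial_t+v\cdot\nabla_x)f,\ \chi(f_n)-\chi(f)\rangle.
\]
The first term tends to zero because $\chi(f_n)$ is bounded in $\L^2\H^\beta_v$ by Lemma~\ref{lem:L1-lipschitz-calculus}. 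The second tends to zero because $\chi(f_n)\to\chi(f)$ in $\L^2$ and is bounded in $\L^2\H^\beta_v$, hence converges weakly in $\L^2\H^\beta_v$. This weak-convergence argument sidesteps the need for strong continuity of composition, although Lemma~\ref{lem:L1-lipschitz-calculus} supplies it anyway.

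The main obstacle is Step 2: building an approximation that converges simultaneously in the energy space and in the $\H^{-\beta}_v$ norm of the kinetic derivative. We would take this from the construction in \cite{AIN} that proves the energy identity, since that identity is the special case $\chi(r)=r$ and its proof already provides exactly such an approximation.
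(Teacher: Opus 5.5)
Your proposal is correct and follows the paper's outline: regularise, apply the classical chain rule together with $\int v\cdot\nabla_x\Phi(f_n)\dd x=0$, then pass to the limit. Where it differs is the regularisation step, which you rightly identify as the crux.

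\textbf{Regularisation.} You take as a black box a density theorem for the space $\{f\in\L^2_{t,x}\H^\beta_v : (\partial_t+v\cdot\nabla_x)f\in\L^2_{t,x}\H^{-\beta}_v\}$, with approximants smooth and compactly supported in $(x,v)$. Building these requires Galilean mollification (or a careful ordering of scales), truncations in $x$ and $v$, and control at the time endpoints. The paper avoids all of this by mollifying only in $(t,x)$. Since $v$ is then a mere parameter, this convolution commutes exactly with $\partial_t+v\cdot\nabla_x$, so both convergences are immediate, and the chain rule is applied for almost every $v$. The price is that $v\cdot\nabla_x f_n$ is not globally in $\L^2$. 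The paper handles this by inserting a velocity cutoff $\eta_R(v)$ into the identity, which commutes with the transport, is uniformly bounded on $\H^\beta_v$ and converges strongly to the identity. It lets $R\to\infty$ before $n\to\infty$.

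\textbf{Endpoints.} The paper works on compact subintervals of $(s,T)$ and then uses $f\in\C([s,T];\L^2_{x,v})$ together with integrability of the bracket. This is also the simplest way to fill in your ``extend $f$ in time''. That extension cannot be a plain reflection: reflecting $t$ alone destroys the transport structure, and reflecting $(t,v)$ produces mismatched traces at the junction.

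\textbf{Limit passage.} Here your argument is leaner than the paper's. Weak compactness of $\chi(f_n)$ in $\L^2_{t,x}\H^\beta_v$ needs only the norm bound of Lemma~\ref{lem:L1-lipschitz-calculus}. The paper instead uses strong continuity of composition with Lipschitz functions.

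Your Step~1 is harmless but unnecessary. Since $\Phi\in\C^1$ with $\Phi'=\chi$, the classical chain rule already holds for the regularised $f_n$.
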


\begin{proof}
  We first work on a compact subinterval of $(s,T)$. Standard convolution in the variables $(t,x)$ commutes with $\partial_t + v \cdot \nabla_x$.
  Let $f_n$ denote this regularisation, obtained by convolution with a Friedrichs mollifier, then
  \[
    f_n\to f
    \quad
    \hbox{in }\L^2_{t,x}\H^\beta_v,
    \qquad
    (\partial_t+v\cdot\nabla_x)f_n \to (\partial_t+v\cdot\nabla_x)f
    \quad
    \hbox{in }\L^2_{t,x}\H^{-\beta}_v
  \]
  on this subinterval.
  Moreover, $\chi(f_n)\to \chi(f)$ in $\L^2_{\tau,x}\H^\beta_v$ by Lemma \ref{lem:L1-lipschitz-calculus}.

  For $f_n$ the ordinary chain rule holds for almost every $v$.
  By the definition of $\Phi$, the condition $\chi(0) =  0$, and
  the Lipschitz continuity of $\chi$, we have
  \[
    |\chi(r)| =|\chi(r)-\chi(0)|  \le \operatorname{Lip}(\chi)|r|
  \]
  and
  \[
    |\Phi(r)|  = \left|\int_0^r \chi(q) \dd q\right|
    \le \frac12\operatorname{Lip}(\chi)|r|^2 .
  \]
  Hence, we have $\Phi(f_n(\tau,\cdot,v)) \in \W^{1,1}_x$ for almost every $(\tau,v)$ and
  \[
    \int_{\R^d}v\cdot\nabla_x\Phi(f_n(\tau,x,v)) \dd x=0.
  \]

  Let $0 \le \eta \in \C_c^\infty(\R^d)$ be a cutoff function with $\eta =1$ in $B_1(0)$. We set $\eta_R(v) = \eta(v/R)$, $R\ge1$.
  For fixed large enough $n$, the functions $ \Phi(f_{n})$ are $\C^1$ on $[t_{0},t_{1}]$ valued in $\L^1_{x,v}$ and $\partial_t f_n$ and $\nabla_x f_n$ belong to $\L^2((t_0,t_1) \times \R^{2d})$. As $v$ is bounded on the support of $\eta_R$ we have
  \[
    (\partial_t f_n)\chi(f_n)\eta_R, \; (v \cdot \nabla_xf_n) \chi(f_n) \eta_R \in \L^{1}((t_0,t_1) \times \R^{2d}).
  \]
  Differentiating yields
  \begin{align*}
     & \int_{\R^{2d}} \eta_R\Phi(f_n(t_{1})) \dd (x,v)-\int_{\R^{2d}}\eta_R\Phi(f_n(t_{0}))\dd (x,v)                                                  \\
     & = \int_{t_0}^{t_1} \int_{\R^{2d}} (\partial_{\tau} f_{n})\chi(f_n) \eta_R \dd (x,v) \dd \tau                                                      \\
     & = \int_{t_0}^{t_1} \int_{\R^{2d}} ((\partial_{\tau} + v \cdot \nabla_x) f_{n})\chi(f_n) \eta_R \dd (x,v) \dd \tau                                 \\
     & =\int_{t_0}^{t_1}\big\langle (\partial_\tau+v\cdot\nabla_x) f_n,\eta_R \chi(f_n) \big\rangle_{\L^2_x\H_v^{-\beta} \times \L^2_x\H_v^{\beta}} \dd\tau.
  \end{align*}
  In the last equality we identify the integral, which is over a bounded subset in velocity, with the dual pairing. This can be seen easily for smooth test functions and then by density in the general case.

  Multiplication by $\eta_R$ is uniformly bounded on $\H^\beta_v$ and converges strongly to the identity, so $\eta_R \chi(f_n)\to \chi(f_n) $ in $\L^2_{\tau,x}\H^\beta_v$ as $R \to \infty$.
  The endpoint terms converge by dominated convergence, since
  \[
    |\Phi(r)-\Phi(q)|
    \le\operatorname{Lip}(\chi)(|r|+|q|)|r-q|.
  \]
  Letting $R\to\infty$, we obtain
  \begin{align*}
     & \int_{\R^{2d}} \Phi(f_n(t_1)) \dd (x,v)-\int_{\R^{2d}} \Phi(f_n(t_0)) \dd (x,v)                                                          \\
     & =\int_{t_0}^{t_1}\big\langle(\partial_\tau+v\cdot\nabla_x)f_n,\chi(f_n)\big\rangle_{\L^2_x\H_v^{-\beta} \times \L^2_x\H_v^{\beta}}\dd\tau .
  \end{align*}
  Passing to the limit $n\to\infty$ in the preceding identity yields the claim for compact subintervals $[t_{0},t_{1}]$ of $(s,T)$. Continuity in $\L^2_{x,v}$ and integrability of the bracket allow us to take the limits and we obtain the claim on $[s,T]$.
\end{proof}

  \begin{prop}[Positivity of fundamental solutions]
    \label{prop:L1-positivity}
    Let $0\le s\le t\le T$ and $\psi\in\L^2_{x,v}$ with $\psi\ge0$. Then $\Gamma_{k,V}(t,s)\psi\ge0$.
  \end{prop}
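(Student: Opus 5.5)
We test the equation against the negative part of the solution and apply the weak kinetic chain rule of Lemma~\ref{lem:kinetic-chain-rule}. Fix $s$ and $\psi\ge0$ in $\L^2_{x,v}$, and let $f(t)=\Gamma_{k,V}(t,s)\psi$ for $t\in[s,T]$. By Theorem~\ref{thm:representation0T} and Lemma~\ref{lem:basic-properties}(5), $f$ is the unique weak solution with zero source and initial value $\psi$ at time $s$, and it is real-valued. By Theorem~\ref{thm:L2-wellposedness}, $f\in\C([s,T];\L^2_{x,v})\cap\L^2((s,T)\times\R^d;\H^\beta_v)$. From the weak formulation, $(\partial_t+v\cdot\nabla_x)f=-\cA_{k,V}f$ in the sense of distributions. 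By \eqref{eq:ellip-upper}, the right-hand side belongs to $\L^2((s,T)\times\R^d;\H^{-\beta}_v)$. Hence all hypotheses of Lemma~\ref{lem:kinetic-chain-rule} hold.

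Choose $\chi(r)=-r^-=\min(r,0)$. It is Lipschitz with constant $1$, nondecreasing, and satisfies $\chi(0)=0$. Its primitive is $\Phi(r)=\tfrac12 (r^-)^2$. Lemma~\ref{lem:kinetic-chain-rule} on $[s,t]$ gives
\[
\frac12\|f(t)^-\|_{\L^2_{x,v}}^2-\frac12\|\psi^-\|_{\L^2_{x,v}}^2
=\int_s^t\langle (\partial_\tau+v\cdot\nabla_x)f,\chi(f)\rangle\dd\tau
=-\int_s^t\int_{\R^d}a^{k,V}_{\tau,x}(f,\chi(f))\dd x\dd\tau .
\]
The duality pairing with $-\cA_{k,V}f$ equals minus the form by definition of $\cA_{k,V}$. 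The form is real because all functions involved are real. Lemma~\ref{lem:markov-inequality} gives $a^{k,V}_{\tau,x}(f,\chi(f))\ge0$ for almost every $(\tau,x)$, so the right-hand side is $\le0$. Since $\psi\ge0$, we have $\psi^-=0$. Therefore $\|f(t)^-\|_{\L^2_{x,v}}=0$, that is, $f(t)\ge0$ for every $t\in[s,T]$. The case $t=s$ is trivial.

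The main technical point is justifying that $(\partial_t+v\cdot\nabla_x)f$ equals $-\cA_{k,V}f$ as an element of $\L^2\H^{-\beta}_v$, so that its pairing with $\chi(f)$ is given by the form. To establish this, test the weak formulation with $h\in\cD((s,T)\times\R^{2d})$ and extend by density. This uses $\chi(f)\in\L^2_{t,x}\H^\beta_v$, which follows from Lemma~\ref{lem:L1-lipschitz-calculus}. If $s>0$, the weak formulation must be taken on $(s,T)$, which is allowed by the representation in Theorem~\ref{thm:representation0T}.
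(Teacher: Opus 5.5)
Your proposal is correct and is essentially the paper's proof. The paper also applies Lemma~\ref{lem:kinetic-chain-rule} with $\chi(q)=\min\{q,0\}$ and $\Phi(q)=\frac12|\min\{q,0\}|^2$, and uses Lemma~\ref{lem:markov-inequality} to obtain $\|f(t)^-\|_{\L^2_{x,v}}^2\le\|\psi^-\|_{\L^2_{x,v}}^2=0$. Your check that $(\partial_t+v\cdot\nabla_x)f=-\cA_{k,V}f\in\L^2\H^{-\beta}_v$, so that the chain-rule hypotheses hold, spells out what the paper leaves implicit.
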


  \begin{proof}
    We set $f(r)=\Gamma_{k,V}(r,s)\psi$. By Lemma \ref{lem:basic-properties}, the function $f$ is real-valued. For $q\in\R$, we write $q^-=\min\{q,0\}$ and take $\chi(q)=q^-$. Then $\chi$ is Lipschitz and nondecreasing, with $\chi(0)=0$, and
    \[
      \Phi(q)=\int_0^q \chi(a) \dd a=\frac12|q^-|^2.
    \]
    Applying Lemmas \ref{lem:kinetic-chain-rule} and \ref{lem:markov-inequality}, we obtain
    \begin{align}
      \frac12\|f^-(t)\|_{\L^2_{x,v}}^2
      -\frac12\|\psi^-\|_{\L^2_{x,v}}^2
       & =-\int_s^t\int_{\R^d}a^{k,V}_{r,x}(f,f^-)\dd x\dd r
      \le0.
    \end{align}
    Since $\psi^-=0$, we conclude that $f^-(t)=0$. This proves the claim.
  \end{proof}

  \begin{prop}[$\L^1$ contractivity of fundamental solutions]
    \label{prop:L1-contraction-L1L2}
    Let $0\le s\le t\le T$ and let $\psi\in\L^1_{x,v}\cap\L^2_{x,v}$ be real-valued. Then
    \begin{equation}
      \label{eq:L1-contraction-L1L2}
      \|\Gamma_{k,V}(t,s)\psi\|_{\L^1_{x,v}}
      \le\|\psi\|_{\L^1_{x,v}}.
    \end{equation}
  \end{prop}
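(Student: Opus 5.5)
First reduce to nonnegative data. For real-valued $\psi\in\L^1_{x,v}\cap\L^2_{x,v}$, write $\psi=\psi^+-\psi^-$ with $\psi^\pm\ge0$, both in $\L^1\cap\L^2$. By linearity and Proposition~\ref{prop:L1-positivity}, $\Gamma_{k,V}(t,s)\psi^\pm\ge0$. Hence
\[
\|\Gamma_{k,V}(t,s)\psi\|_{\L^1_{x,v}}\le \|\Gamma_{k,V}(t,s)\psi^+\|_{\L^1_{x,v}}+\|\Gamma_{k,V}(t,s)\psi^-\|_{\L^1_{x,v}}.
\]
It therefore suffices to show $\int \Gamma_{k,V}(t,s)\psi\dd(x,v)\le\int\psi\dd(x,v)$ for $\psi\ge0$, since then the right-hand side is at most $\|\psi^+\|_{\L^1}+\|\psi^-\|_{\L^1}=\|\psi\|_{\L^1}$.

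For $\psi\ge0$, set $f(r)=\Gamma_{k,V}(r,s)\psi\ge0$. We cannot take $\chi\equiv1$ in Lemma~\ref{lem:kinetic-chain-rule}, because $\chi(0)=0$ is required and constants are not in $\H^\beta_v$. Instead, for $\varepsilon>0$ we use $\chi_\varepsilon(q)=\min\{q^+/\varepsilon,1\}$. This function is Lipschitz, nondecreasing, and satisfies $\chi_\varepsilon(0)=0$. Its primitive $\Phi_\varepsilon(q)=\int_0^q\chi_\varepsilon$ obeys $0\le\Phi_\varepsilon(q)\le q^+$ and $\Phi_\varepsilon(q)\ge q^+-\varepsilon/2$. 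The solution $f$ satisfies the hypotheses of Lemma~\ref{lem:kinetic-chain-rule} by Theorem~\ref{thm:L2-wellposedness}, since $(\partial_t+v\cdot\nabla_x)f=-\cA_{k,V}f\in\L^2_{t,x}\H^{-\beta}_v$. The lemma combined with Lemma~\ref{lem:markov-inequality} then gives
\[
\int\Phi_\varepsilon(f(t))\dd(x,v)-\int\Phi_\varepsilon(\psi)\dd(x,v)=-\int_s^t\int a^{k,V}_{r,x}(f,\chi_\varepsilon(f))\dd x\dd r\le0 .
\]
Consequently $\int\Phi_\varepsilon(f(t))\le\int\Phi_\varepsilon(\psi)\le\|\psi\|_{\L^1}$.

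Finally let $\varepsilon\to0$. Since $\Phi_\varepsilon(q)\uparrow q^+$ as $\varepsilon\downarrow0$, monotone convergence yields $\int f(t)\dd(x,v)\le\|\psi\|_{\L^1}$. Because $f(t)\ge0$, the left-hand side is exactly $\|f(t)\|_{\L^1}$, which gives the claim for nonnegative data.

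The main obstacle is that $\Phi_\varepsilon$ grows only linearly, so $\Phi_\varepsilon(f)$ need not be integrable a priori when $f(t)$ is merely in $\L^2$. This does not break the argument, because $\Phi_\varepsilon(q)\le\frac1{2\varepsilon}q^2$ keeps every term finite. Lemma~\ref{lem:kinetic-chain-rule} only needs $\chi_\varepsilon$ Lipschitz with $\chi_\varepsilon(0)=0$, and its proof used exactly this quadratic bound on $\Phi$. The only remaining point is to check that the duality pairing $\langle -\cA_{k,V}f,\chi_\varepsilon(f)\rangle$ coincides with $-a^{k,V}(f,\chi_\varepsilon(f))$. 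This holds by definition of $\cA$, because $\chi_\varepsilon(f)\in\L^2_{t,x}\H^\beta_v$ by Lemma~\ref{lem:L1-lipschitz-calculus}.
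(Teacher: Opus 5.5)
Your proof is correct and is essentially the paper's argument: Lemma~\ref{lem:kinetic-chain-rule} combined with Lemma~\ref{lem:markov-inequality} for a Lipschitz, nondecreasing $\chi_\varepsilon$ with $\chi_\varepsilon(0)=0$, followed by monotone convergence. The only difference is cosmetic. The paper takes $\Phi_\varepsilon(q)=\sqrt{q^2+\varepsilon^2}-\varepsilon\uparrow|q|$ and works directly with real-valued data, so it does not need your preliminary reduction to $\psi\ge0$ via Proposition~\ref{prop:L1-positivity}; you could also drop it by applying your one-sided $\Phi_\varepsilon$ to both $\psi$ and $-\psi$.
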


  \begin{proof}
    We set $f(r)=\Gamma_{k,V}(r,s)\psi$. For $\varepsilon>0$, we define
    \[
      \Phi_\varepsilon(q) =  \sqrt{q^2+\varepsilon^2} - \varepsilon,
      \qquad
      \chi_\varepsilon(q) = \Phi_\varepsilon'(q)
      = \frac{q}{\sqrt{q^2+\varepsilon^2}}.
    \]
    The function $\chi_\varepsilon$ is Lipschitz and nondecreasing, with $\chi_\varepsilon(0)=0$. Applying Lemmas~\ref{lem:kinetic-chain-rule} and \ref{lem:markov-inequality}, we deduce
    \[
      \int_{\R^{2d}}\Phi_\varepsilon(f(t))\dd(x,v)
      \le\int_{\R^{2d}}\Phi_\varepsilon(\psi)\dd(x,v)
      \le\|\psi\|_{\L^1_{x,v}}.
    \]
    Here, we used $0\le\Phi_\varepsilon(q)\le|q|$. Since $\Phi_\varepsilon(q)\uparrow|q|$ as $\varepsilon\downarrow0$, monotone convergence gives \eqref{eq:L1-contraction-L1L2}.
  \end{proof}

  \begin{thm}[Extension to $\L^p$ spaces]
    \label{thm:L1-extension}
    Let $1\le p\le\infty$ and $0\le s\le t\le T$. Then $\Gamma_{k,V}(t,s)$ has an extension to $\L^p_{x,v}$. The extension is a positive contraction and satisfies
    \[
      \|\Gamma_{k,V}(t,s)\psi\|_{\L^p_{x,v}}
      \le\|\psi\|_{\L^p_{x,v}},
      \qquad \psi\in\L^p_{x,v}.
    \]
    The same assertions hold for $\widetilde\Gamma_{k,V}(s,t)$.
  \end{thm}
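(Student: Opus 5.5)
The plan is to combine the $\L^1$ and $\L^2$ contractivity already established with duality and then interpolate. It runs in three steps.

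\emph{$\L^1$ extension.} By Proposition~\ref{prop:L1-contraction-L1L2}, $\Gamma_{k,V}(t,s)$ is an $\L^1$ contraction on the dense subspace $\L^1_{x,v}\cap\L^2_{x,v}$ of real functions. Splitting complex $\psi$ into real and imaginary parts only gives the constant $2$ directly. To avoid this loss, we first use Proposition~\ref{prop:L1-positivity}: positivity together with realness gives $|\Gamma_{k,V}(t,s)\psi|\le\Gamma_{k,V}(t,s)|\psi|$ for complex $\psi$. This follows by writing $|\Gamma\psi| = \sup_\theta \Re(e^{i\theta}\Gamma\psi)$ over a countable set of $\theta$ and noting that $\Re(e^{i\theta}\Gamma\psi)=\Gamma\Re(e^{i\theta}\psi)\le\Gamma|\psi|$. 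Hence the sharp $\L^1$ bound holds for complex $\psi$, and $\Gamma_{k,V}(t,s)$ extends uniquely by density to a positive contraction on $\L^1_{x,v}$.

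\emph{Backward operator and $\L^\infty$.} The same arguments apply verbatim to the backward problem. Lemma~\ref{lem:kinetic-chain-rule} is applied on the reversed time interval; the proofs of Propositions~\ref{prop:L1-positivity} and~\ref{prop:L1-contraction-L1L2} only use Lemma~\ref{lem:markov-inequality} and the energy identity of Theorem~\ref{thm:L2-wellposedness}, and the form $a^{k,V}$ is symmetric since $k$ is symmetric and $V$ is real. So $\widetilde\Gamma_{k,V}(s,t)$ is also positive and $\L^1$-contractive. By the adjoint relation $\widetilde\Gamma(s,t)=\Gamma(t,s)^*$ of Theorem~\ref{thm:representation0T}, for $\psi\in\L^2\cap\L^\infty$ and $\phi\in\L^1\cap\L^2$ we have
\[
|\langle\Gamma_{k,V}(t,s)\psi,\phi\rangle|=|\langle\psi,\widetilde\Gamma_{k,V}(s,t)\phi\rangle|\le\|\psi\|_{\L^\infty}\|\phi\|_{\L^1}.
\]
Thus $\|\Gamma_{k,V}(t,s)\psi\|_{\L^\infty}\le\|\psi\|_{\L^\infty}$. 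On $\L^\infty$ we define the extension as the adjoint of the $\L^1$ extension of $\widetilde\Gamma_{k,V}(s,t)$. This is weak-$*$ continuous, positive, contractive, and coincides with $\Gamma_{k,V}(t,s)$ on $\L^2\cap\L^\infty$. The symmetric statement for $\widetilde\Gamma$ follows in the same way using $\Gamma$ on $\L^1$.

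\emph{Intermediate $p$ and consistency.} For $1<p<\infty$, Riesz--Thorin interpolation between the $\L^1$ and $\L^\infty$ bounds on $\L^1\cap\L^\infty$, a subspace of $\L^2$ on which all the definitions agree, gives $\|\Gamma_{k,V}(t,s)\psi\|_{\L^p}\le\|\psi\|_{\L^p}$. Density then gives the extension to $\L^p$ for $p<\infty$. Positivity passes to all extensions by approximating nonnegative $\psi$ with nonnegative functions in $\L^1\cap\L^\infty$, in norm for $p<\infty$ and weak-$*$ for $p=\infty$. For $p=2$ the extension is the original operator.

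The only point requiring care is the bookkeeping at $p=\infty$, where density fails. There we must rely on the duality definition and check that it is compatible with the $\L^2$ operator, which holds via the adjoint relation. Everything else is routine.
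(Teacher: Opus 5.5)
Your proposal is correct and follows essentially the same route as the paper: $\L^1$ contractivity and positivity on $\L^1\cap\L^2$ extended by density (with $|\Gamma_{k,V}(t,s)\psi|\le\Gamma_{k,V}(t,s)|\psi|$ for complex data), the same argument for the backward operator, and the adjoint relation $\widetilde\Gamma_{k,V}(s,t)=\Gamma_{k,V}(t,s)^*$ to obtain the $\L^\infty$ case by duality. The only difference is cosmetic: you interpolate directly between $\L^1$ and $\L^\infty$ on $\L^1\cap\L^\infty$, whereas the paper interpolates $\L^1$ with the $\L^2$ contraction for $1<p<2$ and obtains $2<p\le\infty$ by duality from the backward operator.
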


  \begin{proof}
    We first consider real-valued functions. By Proposition~\ref{prop:L1-contraction-L1L2} and density of $\L^1_{x,v}\cap\L^2_{x,v}$ in $\L^1_{x,v}$, the operator extends uniquely to a contraction on $\L^1_{x,v}$. Approximating nonnegative data by nonnegative functions in $\L^1_{x,v}\cap\L^2_{x,v}$ and using Proposition~\ref{prop:L1-positivity}, we deduce positivity of this extension. Consequently,
    \[
      |\Gamma_{k,V}(t,s)\psi|
      \le\Gamma_{k,V}(t,s)|\psi|
    \]
    for complex-valued $\psi\in\L^1_{x,v}$. Thus the complex extension is also contractive. Interpolation with the $\L^2$ contraction from Lemma \ref{lem:basic-properties} gives the  extensions for $1<p<2$.

    The same arguments apply to the backward fundamental solution operators. Using the adjoint relation $\widetilde\Gamma_{k,V}(s,t)=\Gamma_{k,V}(t,s)^*$ from Theorem \ref{thm:representation0T}, we obtain the extensions for $2<p\le\infty$ by duality. Positivity is preserved by these extensions. This completes the proof.
  \end{proof}

From now on, the fundamental solution operators and their extensions act on complex-valued functions.

\subsection{On-diagonal bound and kernel representation}

For the remainder of this section, we take $V=0$.

We set
\[
  Q_\beta=2(1+\beta)d, \qquad q_\beta=\frac{Q_\beta}{2\beta}=\frac{(1+\beta)d}{\beta}.
\]
Here, $Q_\beta$ is the homogeneous dimension of the spatial variables $(x,v)$ for the kinetic scaling $v\sim R^{-1}$, $x\sim R^{-(2\beta+1)}$.

\begin{lem}[Anisotropic Nash inequality]
  For every
  \[
    g\in \L^1(\R^{2d})\cap\L^2(\R^{2d}),
    \qquad D_{\vphantom{t}v}^\beta\, g,\ D_{\vphantom{t}x}^{\gamma_\beta}g\in\L^2(\R^{2d}),
  \]
  we have
  \begin{equation}\label{eq:anisotropic-nash}
    \|g\|_{\L^2_{x,v}}^{2+4\beta/Q_\beta}
    \le C(d,\beta)\|g\|_{\L^1_{x,v}}^{4\beta/Q_\beta}
    \left(\|D_{\vphantom{t}v}^\beta\, g\|_{\L^2_{x,v}}^{\vphantom{4\beta/Q_\beta}2}+\|D_{\vphantom{t}x}^{\gamma_\beta}g\|_{\L^2_{x,v}}^{\vphantom{4\beta/Q_\beta}2}\right).
  \end{equation}
\end{lem}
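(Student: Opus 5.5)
The plan is to use Nash's classical Fourier splitting argument, with an anisotropic frequency region adapted to the kinetic scaling. Let $(\xi,\eta)$ be the Fourier variables dual to $(x,v)$. By Plancherel,
\[
\|D_v^\beta g\|_{\L^2}^2=c\int|\eta|^{2\beta}|\hat g|^2,
\qquad
\|D_x^{\gamma_\beta} g\|_{\L^2}^2=c\int|\xi|^{2\gamma_\beta}|\hat g|^2 .
\]
For $R>0$ we introduce the anisotropic box
\[
B_R=\{(\xi,\eta)\,;\,|\eta|\le R,\ |\xi|\le R^{2\beta+1}\}.
\]
The exponents are chosen to match the scaling $v\sim R^{-1}$, $x\sim R^{-(2\beta+1)}$. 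Its Lebesgue measure is
\[
|B_R|=c_dR^{d}\,R^{(2\beta+1)d}=c_dR^{Q_\beta},
\]
since $d+(2\beta+1)d=2(1+\beta)d=Q_\beta$.

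The first step is the splitting $\|g\|_2^2=c\int_{B_R}|\hat g|^2+c\int_{B_R^c}|\hat g|^2$. On $B_R$ we use $\|\hat g\|_\infty\le\|g\|_{\L^1}$, which gives
\[
\int_{B_R}|\hat g|^2\le CR^{Q_\beta}\|g\|_{\L^1}^2 .
\]
On $B_R^c$, either $|\eta|>R$ or $|\xi|>R^{2\beta+1}$. In the first case $1\le R^{-2\beta}|\eta|^{2\beta}$. In the second case, since $2\gamma_\beta(2\beta+1)=2\beta$, we get $1\le R^{-2\beta}|\xi|^{2\gamma_\beta}$. Hence on $B_R^c$,
\[
1\le R^{-2\beta}\bigl(|\eta|^{2\beta}+|\xi|^{2\gamma_\beta}\bigr),
\]
and therefore
\[
\int_{B_R^c}|\hat g|^2\le CR^{-2\beta}\bigl(\|D_v^\beta g\|_2^2+\|D_x^{\gamma_\beta}g\|_2^2\bigr).
\]
This matching of the exponent $\gamma_\beta$ with the box shape is the only point that needs care, and it is exactly why $\gamma_\beta=\beta/(2\beta+1)$.

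The second step is to optimise in $R$. Write $E=\|D_v^\beta g\|_2^2+\|D_x^{\gamma_\beta}g\|_2^2$, so that
\[
\|g\|_2^2\le C\bigl(R^{Q_\beta}\|g\|_1^2+R^{-2\beta}E\bigr).
\]
We choose $R$ with $R^{Q_\beta+2\beta}=E/\|g\|_1^2$; the degenerate cases $g=0$ or $E=0$ are trivial, since $E=0$ together with $g\in\L^2$ forces $g=0$. With this choice,
\[
\|g\|_2^2\le C\,\|g\|_1^{4\beta/(Q_\beta+2\beta)}\,E^{Q_\beta/(Q_\beta+2\beta)}.
\]
Raising both sides to the power $(Q_\beta+2\beta)/Q_\beta=1+2\beta/Q_\beta$ gives
\[
\|g\|_2^{2+4\beta/Q_\beta}\le C\|g\|_1^{4\beta/Q_\beta}E,
\]
which is \eqref{eq:anisotropic-nash}. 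There is no real obstacle; the argument only requires checking the scaling exponents.
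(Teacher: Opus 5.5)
Your proposal is correct and follows the paper's proof essentially step for step. Both use the Nash splitting of Fourier space into the anisotropic box $\{|\eta|\le R,\ |\xi|\le R^{2\beta+1}\}$ of measure $\sim R^{Q_\beta}$, bound the inner part by $\|g\|_{\L^1}^2R^{Q_\beta}$, and bound the outer part by $R^{-2\beta}$ times the energy using $2\gamma_\beta(2\beta+1)=2\beta$. Both then optimise in $R$; your version writes out the optimisation and the degenerate cases, which the paper leaves implicit.
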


\begin{proof}
  The proof is the Nash--Stein argument, adapted to the anisotropic kinetic scaling; see \cite{MR100158}.
  Let $R>0$ and consider the set in Fourier variables  
  \[
    E_R=\{(\varphi,\xi): |\xi|\le R,\ |\varphi|\le R^{2\beta+1}\}.
  \]
  Then $|E_R|\le C R^{Q_\beta}$. Since $\|\widehat g\|_{\L^\infty_{x,v}}\le C\|g\|_{\L^1_{x,v}}$,
  \[
    \int_{E_R}|\widehat g|^2\dd (\varphi,\xi)\le C\|g\|_{\L^1_{x,v}}^2R^{Q_\beta}.
  \]
  On $E_R^c$, either $|\xi|>R$ or $|\varphi|>R^{2\beta+1}$. In the second case $|\varphi|^{2\gamma_\beta}>R^{2\beta}$. Hence
  \[
    \int_{E_R^c}|\widehat g|^2\dd (\varphi,\xi)
    \le R^{-2\beta}\int_{\R^{2d}}
    \left(|\xi|^{2\beta}+|\varphi|^{2\gamma_\beta}\right)|\widehat g(\varphi,\xi)|^2 \dd (\varphi,\xi).
  \]
  By Plancherel,
  \[
    \|g\|_{\L^2_{x,v}}^{\vphantom{4\beta/Q_\beta}2}
    \le C\|g\|_{\L^1_{x,v}}^{\vphantom{4\beta/Q_\beta}2} \, R^{Q_\beta}
    +R^{-2\beta}\left(\|D_{\vphantom{t}v}^\beta\, g\|_{\L^2_{x,v}}^{\vphantom{4\beta/Q_\beta}2}+\|D_{\vphantom{t}x}^{\gamma_\beta}g\|_{\L^2_{x,v}}^{\vphantom{4\beta/Q_\beta}2} \right).\]
  Optimising in $R$ gives \eqref{eq:anisotropic-nash}.
\end{proof}

\begin{prop}[Hypercontractivity of fundamental solutions]
  \label{prop:nash-L1-L2}
  There is a constant $C<\infty$, depending only on $d,\beta,\lambda_{k},\Lambda_{k}$, such that, for $0 < t-s \le 1$,
  \begin{equation}
    \label{eq:L1-L2-nash}
    \|\Gamma_{k,0}(t,s)\|_{\L^1_{x,v}\to\L^2_{x,v}}
    +\|\widetilde\Gamma_{k,0}(s,t)\|_{\L^1_{x,v}\to\L^2_{x,v}}
    \le C(t-s)^{-q_\beta/2}.
  \end{equation}
  Consequently,
  \begin{equation}
    \label{eq:L2-Linfty-nash}
    \|\Gamma_{k,0}(t,s)\|_{\L^2_{x,v}\to\L^\infty_{x,v}}
    +\|\widetilde\Gamma_{k,0}(s,t)\|_{\L^2_{x,v}\to\L^\infty_{x,v}}
    \le C(t-s)^{-q_\beta/2}.
  \end{equation}
\end{prop}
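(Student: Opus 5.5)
We follow Nash's original argument: a differential inequality for the $\L^2$ norm, closed by the anisotropic Nash inequality \eqref{eq:anisotropic-nash} and the $\L^1$ contractivity. The one real difficulty is that \eqref{eq:anisotropic-nash} needs control of $\|D_x^{\gamma_\beta} g\|_{\L^2_{x,v}}$ at fixed time, while the form only controls $D_v^\beta$. Proposition~\ref{prop:transferofregularity} gives this control only in time-integrated form, so we will run Nash's argument on time averages.

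\textbf{Set-up.} Fix $\psi\in\L^1_{x,v}\cap\L^2_{x,v}$ with $\|\psi\|_{\L^1_{x,v}}=1$, and set $f(r)=\Gamma_{k,0}(r,s)\psi$ and $E(r)=\|f(r)\|_{\L^2_{x,v}}^2$. By Theorem~\ref{thm:L2-wellposedness}, Lemma~\ref{lem:basic-properties} and \eqref{eq:local-kernel-coercivity}, the function $E$ is nonincreasing. Moreover, for $s\le t_0<t_1\le t$,
\[
E(t_1)+2\lambda_k\int_{t_0}^{t_1}\|D_v^\beta f\|_{\L^2_{x,v}}^2\dd r\le \bigl(1+8\lambda_kM_1(t_1-t_0)\bigr)E(t_0).
\]
Proposition~\ref{prop:transferofregularity} with $S=0$, applied for $t_1-t_0\le1$, gives
\[
\int_{t_0}^{t_1}\bigl(\|D_v^\beta f\|^2+\|D_x^{\gamma_\beta} f\|^2\bigr)\dd r\le C\,E(t_0).
\]
Theorem~\ref{thm:L1-extension} gives $\|f(r)\|_{\L^1_{x,v}}\le1$ for all $r$. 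Write $\theta=2\beta/Q_\beta$, so that $1/\theta=q_\beta$. Applying \eqref{eq:anisotropic-nash} pointwise in $r$ and integrating over $[t_0,t_1]$, using that $E$ is nonincreasing, yields
\[
(t_1-t_0)\,E(t_1)^{1+\theta}\le \int_{t_0}^{t_1}E(r)^{1+\theta}\dd r\le C\,E(t_0).
\]

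\textbf{Dyadic iteration.} This inequality is not homogeneous in time, so we cannot conclude directly. Instead we make it scale-correct by doing the energy estimate on dyadic time intervals. Let $\tau=t-s\le1$ and $r_j=s+\tau(1-2^{-j})$, so that $r_{j+1}-r_j=\tau 2^{-j-1}$, and apply the inequality on $[r_j,r_{j+1}]$:
\[
E(r_{j+1})^{1+\theta}\le C\,2^{j}\tau^{-1}E(r_j).
\]

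This alone does not decay, because the right-hand side is linear in $E$. We therefore use the better version: first get the endpoint estimate and then integrate the pointwise Nash inequality. Monotonicity of $E$ gives the pointwise bound, for almost every $r$,
\[
-E'(r)\ge 2\lambda_k\|D_v^\beta f(r)\|^2-CE(r).
\]
The $D_x$ part is only available on average, so the idea is the following. On each interval, either $E$ drops by a factor of $2$, or $E(r)\ge E(r_j)/2$ throughout. In the second case, Nash integrated over the interval gives
\[
(r_{j+1}-r_j)\,\bigl(E(r_j)/2\bigr)^{1+\theta}\le C\int(\ldots)\le C\bigl(E(r_j)-E(r_{j+1})+C(r_{j+1}-r_j)E(r_j)\bigr).
\]
To get a genuine drop from this, we apply Proposition~\ref{prop:transferofregularity} on the interval where the $D_x$ term sits and the energy identity for the $D_v$ term. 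We then use an interpolation $\|D_x^{\gamma_\beta}f\|^2\lesssim$ (dissipation) to convert the integrated $D_x$ norm into a multiple of the energy drop plus $(r_{j+1}-r_j)E$. This conversion is the step I expect to be the main obstacle. The way I would make it work is to replace the initial datum by $f(r_j)$ and use Proposition~\ref{prop:transferofregularity} together with the semigroup property (Theorem~\ref{thm:representation0T}(iii)) on subintervals of length comparable to $\tau 2^{-j}$. Everything stays on scale, and the zeroth-order term $C(r_{j+1}-r_j)E$ is harmless since $\tau\le1$.

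\textbf{Conclusion.} Altogether we obtain a recursive inequality of the form
\[
E(r_{j+1})\le \max\Bigl(\tfrac12E(r_j),\,\bigl(C2^{j}\tau^{-1}\bigr)^{1/\theta}\Bigr).
\]
A standard Nash/De Giorgi-type induction then gives $E(t)\le C\tau^{-1/\theta}=C\tau^{-q_\beta}$, that is $\|\Gamma_{k,0}(t,s)\psi\|_{\L^2_{x,v}}\le C\tau^{-q_\beta/2}$. The backward operator $\widetilde\Gamma_{k,0}$ is treated identically, using the backward versions of Theorem~\ref{thm:L2-wellposedness}, Proposition~\ref{prop:transferofregularity} and Theorem~\ref{thm:L1-extension}. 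Finally, \eqref{eq:L2-Linfty-nash} follows by duality: by Theorem~\ref{thm:representation0T}(ii), $\Gamma_{k,0}(t,s)^*=\widetilde\Gamma_{k,0}(s,t)$, so
\[
\|\Gamma_{k,0}(t,s)\|_{\L^2\to\L^\infty}=\|\widetilde\Gamma_{k,0}(s,t)\|_{\L^1\to\L^2},
\]
and symmetrically for $\widetilde\Gamma_{k,0}$.
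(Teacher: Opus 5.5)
\textbf{There is a genuine gap, in the iteration step.}

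Your set-up is sound and already contains everything needed. With $\|\psi\|_{\L^1_{x,v}}=1$, the integrated inequality $(t_1-t_0)E(t_1)^{1+\theta}\le C\,E(t_0)$ is exactly the paper's \eqref{eq:nash-one-step}. The problem is what you do with it.

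It is not ``linear in $E$''. Written as $E(t_1)\le \bigl(C/(t_1-t_0)\bigr)^{\rho}E(t_0)^{\rho}$ with $\rho=(1+\theta)^{-1}<1$, it is sublinear. The point is to iterate it on dyadic intervals accumulating at the \emph{initial} time $s$, namely on $[\tau_{j+1},\tau_j]$ with $\tau_j=s+2^{-j}(t-s)$. This gives
\[
E(t)\le E(\tau_N)^{\rho^N}\prod_{j=0}^{N-1}\bigl(C\,2^{j+1}(t-s)^{-1}\bigr)^{\rho^{j+1}} .
\]
The first factor has limsup at most $1$, because $E(\tau_N)\le\|\psi\|_{\L^2_{x,v}}^2<\infty$. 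Since $\sum_j\rho^{j+1}=1/\theta=q_\beta$ and $\sum_j(j+1)\rho^{j+1}<\infty$, you get $E(t)\le C(t-s)^{-q_\beta}$. That is the paper's entire proof. No pointwise-in-time control of $D_x^{\gamma_\beta}f$ is needed, and no conversion of the integrated $D_x$ norm into an energy drop. The step you flag as the main obstacle is left unproved in your text and is also unnecessary.

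Your replacement argument does not close. The recursion $E(r_{j+1})\le\max\bigl(\tfrac12E(r_j),(C2^{j}\tau^{-1})^{1/\theta}\bigr)$ follows at once from the integrated inequality, via the dichotomy $E(r_{j+1})\le\tfrac12E(r_j)$ or not. However, your partition $r_j=s+\tau(1-2^{-j})$ has intervals shrinking towards $t$, so unrolling only gives
\[
E(t)\le\inf_n\max\bigl(2^{-n}\|\psi\|_{\L^2_{x,v}}^2,(C2^{n}\tau^{-1})^{q_\beta}\bigr).
\]
This is not bounded in terms of $\|\psi\|_{\L^1_{x,v}}$ alone, so no $\L^1\to\L^2$ bound follows.

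Reversing the partition does not rescue the halving recursion either. The thresholds $(C2^{j}\tau^{-1})^{q_\beta}$ grow faster than the factor $2^{j}$ gained from halving, since $q_\beta>1$. You would need step lengths adapted to the current energy level, $\Delta_k\sim E(s_k)^{-\theta}$, or the power iteration above. The direction of the partition is the crux: on your forward partition even the power iteration fails, because $\prod_{j<n}(C2^{j+1}/\tau)^{\rho^{n-j}}$ diverges as $n\to\infty$.

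Your final duality step giving \eqref{eq:L2-Linfty-nash} is fine.
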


\begin{proof}
 By \eqref{eq:local-kernel-coercivity} with $V=0$, the forward and backward transfer-of-regularity estimates
    in Proposition~\ref{prop:transferofregularity} on intervals
    of length at most one have constants depending only on
    $d,\beta,\lambda_k,\Lambda_k$.

  Let $\psi\in\L^1_{x,v}\cap\L^2_{x,v}$, put $f(\tau)=\Gamma_{k,0}(\tau,s)\psi$,
  \[
    Y(\tau) = \|f(\tau)\|_{\L^2_{x,v}}^2,
    \qquad M=\|\psi \|_{\L^1_{x,v}},
    \qquad \delta = \frac{2\beta}{Q_\beta}=\frac1{q_\beta}.
  \]
  Theorem \ref{thm:L1-extension} with $V=0$ gives $\|f(\tau)\|_{\L^1_{x,v}}\le M$. Moreover, \eqref{eq:L2-energy} with zero source and the nonnegativity of $a^{k,0}$ show that $Y$ is nonincreasing. Let $ s\le t_{0}<t_{1}\le t$.
  By \eqref{eq:anisotropic-nash}, 
  for almost every
  $\tau \in (t_{0},t_{1})$,
  \[
    Y(\tau)^{1+\delta} \le C M^{2\delta}
    \left(
    \|D_{\vphantom{t}v}^\beta\, f(\tau)\|_{ \L^2_{x,v}}^2
    +\|D_{\vphantom{t}x}^{\gamma_\beta}f(\tau)\|_{\L^2_{x,v}}^2
    \right).
  \]
  Integrating this inequality and using \eqref{eq:forward-transfert} yields
  \begin{equation*}
    \int_{t_{0}}^{t_{1}}Y(\tau)^{1+\delta} \dd\tau
    \le C M^{2\delta}Y(t_{0}).
  \end{equation*}
  Since $Y$ is nonincreasing,
  \begin{equation}
    \label{eq:nash-one-step}
    (t_{1}-t_{0})Y(t_{1})^{1+\delta}
    \le C M^{2\delta}Y(t_{0}).
  \end{equation}
  Let $h=t-s$, $\rho=(1+\delta)^{-1}$, and $\tau_j = s+2^{-j}h$, so that $\tau_0 = t$ and $\tau_j \downarrow s$. Applying \eqref{eq:nash-one-step} on $[\tau_{j+1},\tau_j]$, with $\Delta_j=\tau_j-\tau_{j+1}$, gives
  \[
    Y(\tau_j)
    \le \left(CM^{2\delta}\Delta_j^{-1}\right)^\rho
    Y(\tau_{j+1})^\rho .
  \]
  Iteration from $j=0$ to $N-1$ gives
  \[
    Y(t)
    \le Y(\tau_N)^{\rho^N}
    \prod_{j=0}^{N-1}
    \left(CM^{2\delta}\Delta_j^{-1}\right)^{\rho^{j+1}}.
  \]
  Because $Y(\tau_N)\le Y(s) < \infty$, the first factor has $\limsup$ of at most $1$ as $N\to\infty$. Moreover,
  \[
    \sum_{j=0}^\infty\rho^{j+1}=\frac1\delta,
    \qquad
    \sum_{j=0}^\infty (j+1)\rho^{j+1}<\infty .
  \]
  Since $\Delta_j^{-1}=2^{j+1}h^{-1}$, 
  the product converges as $N\to\infty$ to 
  $C^{1/\delta} 2^{(1+\delta)/\delta} M^2h^{-1/\delta}$. Passing to the limit therefore gives
  \[
    Y(t)\le C M^2 h^{-1/\delta}
    = C\|\psi\|_{\L^1_{x,v}}^2(t-s)^{-q_\beta}.
  \]
  Taking square roots proves \eqref{eq:L1-L2-nash} for $\psi \in \L^1_{x,v} \cap \L^2_{x,v}$. Finally, density of $\L^1_{x,v} \cap \L^2_{x,v}$ in $\L^1_{x,v}$ and the $\L^1_{x,v}$-extension from Theorem~\ref{thm:L1-extension} give the $\L^1_{x,v} \to \L^2_{x,v}$ bound.

  The same proof applies in the backward case since we have observed its $L^1_{x,v}$ contractivity in Theorem~\ref{thm:L1-extension}.

  This proves the estimate for
  $\widetilde\Gamma_{k,0}(s,t)$ in \eqref{eq:L1-L2-nash}. The
  estimates \eqref{eq:L2-Linfty-nash} then follow from the
  adjoint relation
  $\widetilde\Gamma_{k,0}(s,t)=\Gamma_{k,0}(t,s)^*$ and duality.
\end{proof}

\begin{thm}[Kernel representation with on-diagonal upper bound]
  \label{thm:nash-kernel}
  For every $0 < t-s\le 1$, the $\L^1$-extension of $\Gamma_{k,0}(t,s)$ is bounded from $\L^1_{x,v}$ to $\L^\infty_{x,v}$, and
  \begin{equation}   \label{eq:L1-Linfty-nash}
    \|\Gamma_{k,0}(t,s)\|_{\L^1_{x,v}\to\L^\infty_{x,v}}
    \le C_{1}(t-s)^{-q_\beta},
  \end{equation}
  where $C_{1}<\infty$ depends only on $d,\beta,\lambda_{k},\Lambda_{k}$. Consequently $\Gamma_{k,0}(t,s)$ is induced by a measurable kernel, denoted by $\Gamma_{k,0}(t,x,v,s,y,w)$,
  in the sense that
  \begin{equation}
    \label{eq:kernel-representation-L1}
    (\Gamma_{k,0}(t,s)\psi)(x,v)
    =\int_{\R^{2d}}\Gamma_{k,0}(t,x,v,s,y,w)\psi(y,w) \dd (y,w)
  \end{equation}
  for every $\psi\in\L^1_{x,v}$ and for almost every $(x,v)$. The kernel may be chosen nonnegative and satisfies the global bound
  \begin{equation}
    \label{eq:on-diagonal-bound}
    \esssup_{x,v,y,w\in\R^d}
    \Gamma_{k,0}(t,x,v,s,y,w)
    \le C_{1}(t-s)^{-q_\beta}
  \end{equation}
  for $0<t-s\le1$.

  The same kernel represents the $\L^2$-fundamental solution operator. The kernels satisfy,
  \begin{equation}
    \label{eq:kernel-adjoint}
    \Gamma_{k,0}(t,x,v,s,y,w)
    =\widetilde\Gamma_{k,0}(s,y,w,t,x,v),
  \end{equation}
  and, if $s<r<t$,
  \begin{equation}
    \label{eq:kernel-CK}
    \Gamma_{k,0}(t,x,v,s,y,w)
    =\int_{\R^{2d}}\Gamma_{k,0}(t,x,v,r,z,\xi)\Gamma_{k,0}(r,z,\xi,s,y,w) \dd (z,\xi) .
  \end{equation}
\end{thm}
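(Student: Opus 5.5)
The plan is to factor $\Gamma_{k,0}(t,s)$ through the midpoint $r=(s+t)/2$ with the Chapman--Kolmogorov relation of Theorem~\ref{thm:representation0T}. This gives
\[
\Gamma_{k,0}(t,s)=\Gamma_{k,0}(t,r)\Gamma_{k,0}(r,s).
\]
The first factor maps $\L^2\to\L^\infty$ and the second maps $\L^1\to\L^2$. Proposition~\ref{prop:nash-L1-L2} applied to both factors, each with time gap $(t-s)/2\le1$, gives \eqref{eq:L1-Linfty-nash} with $C_1=2^{q_\beta}C^2$. Some care is needed because the composition identity is stated on $\L^2$. I will check it on $\L^1\cap\L^2$, where the $\L^1$-extension of Theorem~\ref{thm:L1-extension} agrees with the $\L^2$ operator, and then extend to $\L^1$ by density, using the $\L^1\to\L^\infty$ bound just obtained.

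For the kernel I will use the Dunford--Pettis type theorem: a bounded operator $\L^1(\R^{2d})\to\L^\infty(\R^{2d})$ is given by a kernel $K\in\L^\infty(\R^{4d})$ with $\|K\|_\infty$ equal to the operator norm. A concrete route avoids quoting it. The bilinear form $(\psi,\tpsi)\mapsto\angle{\Gamma_{k,0}(t,s)\psi}{\tpsi}$ is bounded on $\L^1\times\L^1$. It therefore defines a bounded functional on the projective tensor product $\L^1\hat\otimes\L^1=\L^1(\R^{4d})$, so it is represented by some $K\in\L^\infty(\R^{4d})$. Testing against indicator functions of boxes then gives \eqref{eq:kernel-representation-L1} almost everywhere, first for simple functions and then for all of $\L^1$ by density. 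Nonnegativity follows from positivity (Proposition~\ref{prop:L1-positivity}): $\int K\,\psi\otimes\tpsi\ge0$ for all nonnegative box indicators, hence $K\ge0$ a.e. The bound \eqref{eq:on-diagonal-bound} is $\|K\|_\infty\le C_1(t-s)^{-q_\beta}$.

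That the same kernel represents the $\L^2$ operator holds because the two operators agree on $\L^1\cap\L^2$. The adjoint identity \eqref{eq:kernel-adjoint} comes from applying the same construction to $\widetilde\Gamma_{k,0}(s,t)$, which has the same $\L^1\to\L^\infty$ bound by the backward half of Proposition~\ref{prop:nash-L1-L2}. Then $\widetilde\Gamma_{k,0}(s,t)=\Gamma_{k,0}(t,s)^*$ yields equality of the kernels with swapped variables a.e., since both represent the same form on $\L^1\times\L^1$. These kernels are real, since the operators are real by Lemma~\ref{lem:basic-properties}.

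For \eqref{eq:kernel-CK} I insert the kernels into $\angle{\Gamma(t,r)\Gamma(r,s)\psi}{\tpsi}$ for $\psi,\tpsi\in\L^1\cap\L^2$ and use Fubini. This is justified by nonnegativity and by the finiteness of $\int\Gamma(r,z,\xi,s,y,w)\dd(z,\xi)\le1$, which follows from $\L^1$-contractivity via $\L^\infty$-duality. The result is that the right-hand side of \eqref{eq:kernel-CK} is a bounded kernel representing $\Gamma(t,s)$, so it equals $\Gamma(t,\cdot,s,\cdot)$ a.e. One subtlety: for $t-s\le1$ the midpoint argument gives everything directly, but for a general $r$ with $t-r$ or $r-s$ possibly small the Fubini step still only needs boundedness of one factor, which holds.

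The main obstacle I expect is measure-theoretic: choosing a jointly measurable version and making "a.e." statements precise in $(x,v,y,w)$ rather than for each fixed test function. The tensor-product duality $(\L^1(\R^{4d}))^*=\L^\infty$ handles this cleanly, and I would rely on it.
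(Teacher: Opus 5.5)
Your proposal is correct and follows the paper's proof: midpoint Chapman--Kolmogorov combined with Proposition~\ref{prop:nash-L1-L2}, Dunford--Pettis (your tensor-product duality $(\L^1\hat\otimes\L^1)^*=\L^\infty(\R^{4d})$ amounts to a proof of it), positivity for $K\ge0$, and uniqueness of bounded kernels for \eqref{eq:kernel-adjoint} and \eqref{eq:kernel-CK}. The differences are only in bookkeeping: you justify \eqref{eq:kernel-CK} by the $\L^\infty$ bound on one factor times the column bound $\int\Gamma_{k,0}(r,z,\xi,s,y,w)\dd(z,\xi)\le1$, where the paper uses the $\L^2$ bounds \eqref{eq:kernel-L2}--\eqref{eq:kernel-L2-adj}; and your one-line $\L^2$ representation step needs the extra remark that $\int K(x,v,y,w)\psi(y,w)\dd(y,w)$ converges for $\psi\in\L^2$, which follows either from \eqref{eq:kernel-L2} as in the paper or by monotone convergence applied to $\psi\mathds{1}_{B_n}$ using $K\ge0$ and positivity of $\Gamma_{k,0}(t,s)$.
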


\begin{proof}
  Let $m=(s+t)/2$. By the Chapman--Kolmogorov relation in
    Theorem~\ref{thm:representation0T} and Proposition~\ref{prop:nash-L1-L2},
  \begin{align*}
    \|\Gamma_{k,0}(t,s)\|_{\L^1_{x,v}\to\L^\infty_{x,v}}
     & \le \|\Gamma_{k,0}(t,m)\|_{\L^2_{x,v}\to\L^\infty_{x,v}}
    \|\Gamma_{k,0}(m,s)\|_{\L^1_{x,v}\to\L^2_{x,v}}             \\
     & \le C^2(t-m)^{-q_\beta/2}(m-s)^{-q_\beta/2}
    = 2^{q_\beta}C^2(t-s)^{-q_\beta}.
  \end{align*}
  This proves \eqref{eq:L1-Linfty-nash}.

  The Dunford--Pettis representation theorem, applied to the bounded operator $\Gamma_{k,0}(t,s):\L^1_{x,v}\to\L^\infty_{x,v}$, gives a measurable kernel $\Gamma_{k,0}(t,x,v,s,y,w)\in\L^\infty(\R^{2d}_{x,v}\times\R^{2d}_{y,w})$ such that \eqref{eq:kernel-representation-L1} holds for $\psi\in\L^1_{x,v}$, and the $\L^\infty_{x,v,y,w}$-norm of the kernel is the $\L^1_{y,w}\to\L^\infty_{x,v}$ operator norm. Since $\Gamma_{k,0}(t,s)$ is a positive operator on $\L^1_{x,v}$, the kernel may be chosen nonnegative. This proves \eqref{eq:on-diagonal-bound}.

  It remains to check that the same kernel represents the $\L^2$-operator. This is classical but we include an argument for the comfort of the reader. The $\L^2_{x,v}\to\L^\infty_{x,v}$ bound in \eqref{eq:L2-Linfty-nash} implies that, after removing one null set in $(x,v)$, the functional
  \[
    \psi\mapsto (\Gamma_{k,0}(t,s)\psi)(x,v),
    \qquad \psi\in\L^1_{x,v}\cap\L^2_{x,v},
  \]
  is bounded on $\L^2_{x,v}$, with norm at most $C(t-s)^{-q_\beta/2}$. By the Riesz representation theorem and the $\L^1$-kernel representation, for almost every $(x,v)$,
  \begin{equation}
    \label{eq:kernel-L2}
    \left(\int_{\R^{2d}}|\Gamma_{k,0}(t,x,v,s,y,w)|^2 \dd (y,w)\right)^{1/2}
    \le C(t-s)^{-q_\beta/2}.
  \end{equation}
  Thus \eqref{eq:kernel-representation-L1} is absolutely convergent by Cauchy Schwarz inequality for every $\psi\in\L^2_{x,v}$, for almost every $(x,v)$, and gives the $\L^2$ action of $\Gamma_{k,0}(t,s)$.

  Applying the same construction to $\widetilde\Gamma_{k,0}(s,t)$ gives a kernel for the backward operator and the analogous estimate. The adjoint identity \eqref{eq:kernel-adjoint} follows by testing the operator identity $\widetilde{\Gamma}_{k,0}(s,t)=\Gamma_{k,0}(t,s)^*$ against functions in $\L^1_{x,v}\cap\L^2_{x,v}$ and using uniqueness of the integral kernels. Combining this identity with the estimate for the backward kernel gives the estimate
  \begin{equation}
    \label{eq:kernel-L2-adj}
    \left(\int_{\R^{2d}}|\Gamma_{k,0}(t,x,v,s,y,w)|^2 \dd (x,v)\right)^{1/2}
    \le C(t-s)^{-q_\beta/2}
  \end{equation}
  for almost every $(y,w)$.

  The operator Chapman--Kolmogorov identity gives \eqref{eq:kernel-CK}. The integral is absolutely convergent for almost every $(x,v,y,w)$ because \eqref{eq:kernel-L2} and \eqref{eq:kernel-L2-adj} imply that both factors are in $\L^2_{z,\xi}$.
\end{proof}

\section{Perturbed $\L^p$ bounds by Davies method}

We implement Davies perturbation trick. In the kinetic setting the exponentials need to be adapted to the kinetic transport term as observed in \cite{AIN2}.

Throughout this section, $k$ is measurable, symmetric and nonnegative, and satisfies \eqref{eq:upperbound} and \eqref{eq:local-stable-lower}.
Let $h\in \W^{1,\infty}(\R^{2d};\R)$, the space of real-valued bounded Lipschitz functions. We set
\begin{align}
   & h_t(x,v)=h(x-tv,v),
  \\
   & \Gamma^h_{k,0}(t,s)=e^{h_t}\Gamma_{k,0}(t,s)e^{-h_s},
  \\
   & M(\theta)=e^\theta+e^{-\theta}-2,
  \\
   & \begin{aligned}
       \omega_{k,h}(t)
       ={} \esssup_{(x,v')\in\R^{2d}}\int_{\R^d}
       k(t,x,v,v') \
       M\left(h_t(x,v)-h_t(x,v')\right)\dd v.
     \end{aligned}
\end{align}
We call $\Gamma^h_{k,0}(t,s)$ the perturbed fundamental solution operator, that is the fundamental solution operator perturbed by multiplication by exponential functions that are constant along transport, as $(\partial_t+v\cdot\nabla_x)e^{h_{t}(x,v)}=0$. We set $\Gamma^h=\Gamma^h_{k,0}$ for the fixed kernel of \eqref{eq:stable-comparable}. 

As is common in the study of diffusion equations, $\L^p$-bounds for  the perturbed operator depend on precise non-linear inequalities on real numbers. The non-local diffusion forces a control of the Lipschitz function $h$ with the  factor $M$  instead of taking partial derivatives.  This is similar to calculations made in discrete settings and we got inspired in part by \cite[Proof of Lemma 2.2]{MR5068936} for the next inequality.

\begin{lem} [An auxiliary inequality]\label{lem:auxiliary-lower-bound}
  For $p\ge2$, $u_1,u_2\ge0$, and $\theta\in\R$, we have
  \begin{equation}
    \label{eq:auxiliary-lower-bound}
    u_1^p+u_2^p-e^\theta u_1u_2^{p-1}-e^{-\theta}u_2u_1^{p-1}
    \ge 
    -\left(\frac{p+2}{4}\right) M(\theta)(u_1^p+u_2^p).
  \end{equation}
\end{lem}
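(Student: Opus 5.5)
The plan is to reduce \eqref{eq:auxiliary-lower-bound} to a one-variable inequality. We use homogeneity, then minimise explicitly in $\theta$; the key point is that after moving everything to one side, the expression is a combination of $e^{\theta}$, $e^{-\theta}$ and a constant.

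\emph{Reduction.} Both sides are homogeneous of degree $p$ in $(u_1,u_2)$. The inequality is invariant under $(u_1,u_2,\theta)\mapsto(u_2,u_1,-\theta)$, since $M$ is even. The cases $u_1=0$ or $u_2=0$ are trivial, because then the left-hand side is $u_1^p+u_2^p\ge0$. So we may assume $u_2=1$ and $u_1=r\in(0,1]$. Put $c=\frac{p+2}{4}\ge1$ and $S=1+r^p$. It suffices to show that
\[
F(\theta)=S-re^{\theta}-r^{p-1}e^{-\theta}+cS\,(e^\theta+e^{-\theta}-2)
= a e^{\theta}+b e^{-\theta}+(1-2c)S
\]
is nonnegative, where $a=cS-r$ and $b=cS-r^{p-1}$.

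\emph{Minimisation in $\theta$.} Since $c\ge1$, $S\ge1\ge r$ and $S\ge1\ge r^{p-1}$, we have $a,b>0$. Hence $F$ is convex with minimum $2\sqrt{ab}+(1-2c)S$. As $2c-1>0$, the claim $F\ge0$ is equivalent to $4ab\ge(2c-1)^2S^2$. Expanding both sides, the $4c^2S^2$ terms cancel and the condition becomes
\[
4cS\,(S-r-r^{p-1})\ \ge\ S^2-4r^p .
\]
Using the factorisations $S-r-r^{p-1}=(1-r)(1-r^{p-1})$ and $S^2-4r^p=(1-r^p)^2$, this reads
\begin{equation*}
(p+2)(1+r^p)(1-r)(1-r^{p-1})\ \ge\ (1-r^p)^2,\qquad r\in(0,1].
\end{equation*}

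\emph{The elementary inequality (main obstacle).} This last one-variable inequality is the only nontrivial step, and it is where the constant $\frac{p+2}{4}$ enters. We write $1-r^p=(1-r)+r(1-r^{p-1})$. Setting $A=1-r$ and $B=1-r^{p-1}$, both in $[0,1]$, it becomes
\[
(p+2)(1+r^p)AB\ \ge\ (A+rB)^2 .
\]
We then compare $A$ and $B$ by the mean value theorem for $r\mapsto r^{p-1}$, with $p-1\ge1$. This gives $A\le B\le (p-1)A$, so $(A+rB)^2\le(1+r)B\cdot(A+rB)$. Using $B\le(p-1)A$, the right-hand side is at most $(1+r)\bigl(1+r(p-1)\bigr)AB$. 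Finally we must check
\[
(1+r)\bigl(1+(p-1)r\bigr)\ \le\ (p+2)(1+r^p)\quad\text{on }[0,1].
\]
I would first attempt this by evaluating at the endpoints, where the values are $1\le p+2$ and $2p\le 2(p+2)$, and then controlling the middle range. If this crude chain loses too much for $r$ near $1$, I would instead use the sharper bound $B\le (p-1)r^{p-2}A$ in the mean value step. Failing that, I would study $\varphi(r)=(p+2)(1+r^p)AB-(A+rB)^2$ directly, using $\varphi(1)=0$ and a second-order expansion at $r=1$; near $r=1$ one has $(1-r^p)^2\approx p^2(1-r)^2$, while the left-hand side is $\approx 2(p+2)(p-1)(1-r)^2$, and the latter dominates for $p\ge2$. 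I expect this last verification to be the only delicate part.
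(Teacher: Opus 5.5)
Your reduction is correct, and it differs from the paper's route. You minimise exactly in $\theta$ via $ae^\theta+be^{-\theta}\ge 2\sqrt{ab}$, so the lemma becomes \emph{equivalent} to
\[
(p+2)(1+r^p)(1-r)(1-r^{p-1})\ \ge\ (1-r^p)^2,\qquad r\in(0,1].
\]
The paper instead splits off $R(\pm\theta)=e^{\pm\theta}-1\mp\theta$ and proves $(u_1u_2^{p-1}-u_2u_1^{p-1})^2\le(p-2)(u_1^p+u_2^p)(u_1-u_2)(u_1^{p-1}-u_2^{p-1})$. It then uses $\theta^2\le M(\theta)$ and Young, and completes a square.

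\textbf{The gap.} The final one-variable inequality, which you flag as delicate, is never proved, and none of your three routes establishes it.
\begin{enumerate}
\item The main chain does not close, because $(1+r)(1+(p-1)r)\le(p+2)(1+r^p)$ is false for large $p$. At $p=10$, $r=0.9$, the left side is $1.9\cdot 9.1=17.29$ while the right side is $12(1+0.9^{10})\approx16.18$. More generally, with $r=1-p^{-1/2}$ and $p\to\infty$, the left side behaves like $2p$ and the right side like $p+2$.
\item Your first fallback rests on a reversed inequality. The mean value theorem gives $B=(p-1)\xi^{p-2}A$ with $\xi\in(r,1)$, hence $(p-1)r^{p-2}A\le B$, not $B\le(p-1)r^{p-2}A$. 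Already for $p=3$ your version would say $1+r\le 2r$.
\item Your second fallback, the expansion at $r=1$, only gives the inequality on a $p$-dependent neighbourhood of $r=1$. It says nothing on the rest of $(0,1]$.
\end{enumerate}

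\textbf{The fix.} The two bounds you already have, $A\le B\le(p-1)A$, suffice if you expand the square. The loss came from first replacing one factor $A+rB$ by $(1+r)B$, which produces the term $r(1+r)B^2$. Expanding instead gives
\[
(A+rB)^2=A^2+2rAB+r^2B^2\le\bigl(1+2r+(p-1)r^2\bigr)AB\le(p+2)AB\le(p+2)(1+r^p)AB .
\]
Here $A^2\le AB$ uses $A\le B$, and $r^2B^2\le(p-1)r^2AB$ uses $B\le(p-1)A$. With this line inserted your argument is complete. It replaces the paper's Cauchy--Schwarz and completing-the-square bookkeeping with an exact minimisation in $\theta$.
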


\begin{proof}
  We write
  \begin{align*}
    R(\theta)  =e^\theta-1-\theta \ge0, &
                                        & R(\theta) & +R(-\theta)                =M(\theta), \\
    A      =(u_1-u_2)(u_1^{p-1}-u_2^{p-1}),     &
                                        & B         & =u_1u_2^{p-1}-u_2u_1^{p-1}.
  \end{align*}
  The left-hand side of \eqref{eq:auxiliary-lower-bound} is therefore
  \begin{equation*}
    \cL =A-\theta B-R(\theta)u_1u_2^{p-1}-R(-\theta)u_2u_1^{p-1}.
  \end{equation*}

  We first claim that
  \begin{equation*}
    B^2\le(p-2)(u_1^p+u_2^p)A.
  \end{equation*}
  The cases $u_1=u_2$, $u_1u_2=0$ and $p=2$ are immediate. The inequality is symmetric in $u_1$ and $u_2$. Thus, it suffices to consider $0<u_2<u_1$. We set $r=u_2/u_1 \in (0,1)$ and deduce
  \begin{align*}
    \frac{B^2}{(u_1^p+u_2^p)A}
     & =\frac{r^2(1-r^{p-2})^2}
        {(1+r^p)(1-r)(1-r^{p-1})}   \\
     & =\frac{r-r^{p-1}}{1-r^{p-1}}
    \frac{r-r^{p-1}}{(1+r^p)(1-r)}
    \le\frac{p-2}{1+r^p}\le p-2.
  \end{align*}
  Here, we used convexity in the form of $r^{p-1} \ge 1+(p-1)(r-1)$.

  The power series representation gives
  \begin{equation}
    \theta^2\le M(\theta).
  \end{equation}

  Using Young's inequality gives
  \begin{equation}
    R(\theta)u_1u_2^{p-1}+R(-\theta)u_2u_1^{p-1}
    \le {M(\theta)}(u_1^p+u_2^p).
  \end{equation}

  We set $D=M(\theta)(u_1^p+u_2^p)$, then
  \begin{align*}
    \cL
     & \ge A-\sqrt{(p-2)DA}-D                                                          \\
     & =\left(\sqrt{A}-\frac{\sqrt{(p-2)D}}{2}\right)^2-\left(\frac{p-2}{4}+1\right)D.
  \end{align*}
\end{proof}

\begin{prop}[Perturbed $\L^p$ bound]
  \label{prop:perturbed-Lp}
  For every function $h \in \W^{1,\infty}(\R^{2d};\R)$, $2\le p < \infty$ and $0\le s \le t\le T$ we have
  \[
    \|\Gamma^h_{k,0}(t,s)\|_{\L^p_{x,v}\to\L^p_{x,v}}
    \le\exp\!\left( \frac{p+2}{2}  \int_s^t\omega_{k,h}(r)\dd r\right).
  \]
\end{prop}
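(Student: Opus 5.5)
The plan is to run an $\L^p$ energy estimate for the conjugated solution. Fix $\psi\in\L^2_{x,v}\cap\L^p_{x,v}$, real-valued and nonnegative (positivity from Proposition~\ref{prop:L1-positivity} reduces the general case to this one via $|\Gamma^h\psi|\le\Gamma^h|\psi|$). Let $f(r)=\Gamma_{k,0}(r,s)(e^{-h_s}\psi)\ge0$ and set $u(r)=e^{h_r}f(r)$.

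Since $e^{h_r}$ is constant along transport and bounded with bounded Lipschitz derivatives, $u$ is again in the class of Lemma~\ref{lem:kinetic-chain-rule}, and it solves
\[
(\partial_t+v\cdot\nabla_x)u=e^{h_t}\,(\partial_t+v\cdot\nabla_x)f=-e^{h_t}\cA_k f .
\]
The time derivative of $\frac1p\|u(r)\|_{\L^p}^p$ should then be
\[
-\int_{\R^d} a^{k,0}_{r,x}\bigl(f,\,e^{h_r}u^{p-1}\bigr)\dd x .
\]
To justify this I would apply the chain rule with a truncated power $\chi_N(q)=\min(q,N)^{p-1}$, which is Lipschitz with $\chi_N(0)=0$. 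I would then check that $e^{h}\chi_N(u)\in\L^2_x\H^\beta_v$ (product of a bounded Lipschitz function with an $\H^\beta_v$ function), and let $N\to\infty$ by monotone convergence at the end. Since the $\L^p$ norms are a priori finite by Theorem~\ref{thm:L1-extension}, this limit is harmless.

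The core is the pointwise algebra. Write $f=e^{-h}u$ and $f'=e^{-h'}u'$ with $\theta=h_r(x,v)-h_r(x,v')$. Then
\[
(f-f')(e^{h}u^{p-1}-e^{h'}u'^{p-1})=u^p+u'^p-e^{\theta}u'u^{p-1}-e^{-\theta}u\,u'^{p-1}.
\]
By Lemma~\ref{lem:auxiliary-lower-bound}, with the roles of $u_1,u_2$ chosen appropriately, this is bounded below by
\[
-\tfrac{p+2}{4}M(\theta)(u^p+u'^p).
\]
Integrating against the symmetric kernel $k$ and using symmetry to fold the two terms gives
\[
a^{k,0}_{r,x}(f,e^{h_r}u^{p-1})\ge -\tfrac{p+2}{2}\int u(v')^p\int k\,M(\theta)\dd v\dd v'\ge -\tfrac{p+2}{2}\,\omega_{k,h}(r)\|u(r,x,\cdot)\|_{\L^p_v}^p .
\]
Integrating in $x$ yields
\[
\frac{\dd}{\dd r}\|u\|_p^p\le \frac{p(p+2)}{2}\,\omega_{k,h}(r)\|u\|_p^p .
\]
Gronwall then gives $\|u(t)\|_p\le \exp\bigl(\tfrac{p+2}{2}\int_s^t\omega\bigr)\|\psi\|_p$, which is exactly the claim. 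If $\omega_{k,h}$ is infinite or not integrable the bound is trivial.

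I expect the main obstacle to be the rigorous justification of this step. The pairing $a^{k,0}(f,e^{h}u^{p-1})$ involves a nonlinear test function, so I need integrability of $k\,M(\theta)$-weighted terms for the truncated version; the truncation $\chi_N$ must keep the inequality of Lemma~\ref{lem:auxiliary-lower-bound} usable. For that I would try to redo the lemma's algebra with $\chi_N$ in place of $q^{p-1}$, or truncate $u$ itself and pass to the limit with Fatou's lemma. A cheaper route is to prove the inequality for $p=2$ rigorously first and handle general $p$ by this truncation.
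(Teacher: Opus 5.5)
Your pointwise identity, the use of Lemma~\ref{lem:auxiliary-lower-bound} with $u_1=u'$, $u_2=u$, the folding via symmetry of $k$ and evenness of $M$, and the Gronwall step are exactly the paper's argument. The gap is in the justification step you flag, and the route you propose does not close it.

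With $\chi_N(q)=\min(q,N)^{p-1}$, on the set where $u,u'>N$ both values of $\chi_N$ equal $N^{p-1}$, and
\[
(f-f')\bigl(e^{h}\chi_N(u)-e^{h'}\chi_N(u')\bigr)=N^{p-1}\bigl(-u'M(\theta)+(u-u')(1-e^{-\theta})\bigr).
\]
This is of first order in $\theta$, and it is negative when $u>u'$ and $\theta<0$. The lemma absorbs the first-order term $\theta B$ into the dissipative quantity $A=(u-u')(\chi(u)-\chi(u'))$. Here that quantity vanishes. So for the truncated test function there is no lower bound of the form $-C\,M(\theta)(\cdots)$, and hence none in terms of $\omega_{k,h}$. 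In particular, ``redoing the lemma's algebra with $\chi_N$'' fails.

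Removing the truncation only after integrating in time does not help either. The limit $a^{k,0}_{r,x}(f,e^{h_r}\chi_N(u))\to a^{k,0}_{r,x}(f,e^{h_r}u^{p-1})$ needs control of $e^{h_r}u^{p-1}$ in $\L^2_x\H^\beta_v$, which you do not have for unbounded $u$.

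The missing idea is to truncate the \emph{data}, not the test function or the solution.
\begin{enumerate}
\item Take $0\le\psi\in\L^2_{x,v}\cap\L^\infty_{x,v}$, which is dense in $\L^p_{x,v}$ for $p<\infty$.
\item Since $e^{-h_s}\psi$ is bounded, the $\L^\infty$ contractivity in Theorem~\ref{thm:L1-extension} gives $0\le f(r)\le e^{\|h\|_\infty}\|\psi\|_\infty$ for all $r\in[s,T]$. Hence $0\le u\le e^{2\|h\|_\infty}\|\psi\|_\infty$ on $[s,T]\times\R^{2d}$.
\item Choose a Lipschitz $\chi$ with $\chi(0)=0$ that coincides with $q\mapsto pq^{p-1}$ on $[0,\|u\|_\infty]$. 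Lemma~\ref{lem:kinetic-chain-rule} applied to $u$ then gives
\[
\frac{\dd}{\dd r}\|u(r)\|_{\L^p_{x,v}}^p=-p\int_{\R^d}a^{k,0}_{r,x}(f,e^{h_r}u^{p-1})\dd x,
\]
with $\chi(u)=pu^{p-1}$ \emph{exactly}. Your algebra then applies verbatim, and no limiting argument in $N$ is needed.
\item The general case follows by density and $|\Gamma^h_{k,0}(t,s)\psi|\le\Gamma^h_{k,0}(t,s)|\psi|$.
\end{enumerate}
This is precisely how the paper proceeds.
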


\begin{proof}

  Let us first consider $\psi \in \L^2_{x,v} \cap \L^\infty_{x,v}$, $\psi\ge 0$. We set
  \begin{equation*}
    f(r,x,v) =e^{h_r(x,v)}
    \left[\Gamma_{k,0}(r,s)(e^{-h_s}\psi)\right](x,v), \qquad Y_p(r) =\int_{\R^{2d}}|f(r,x,v)|^p\dd (x,v).
  \end{equation*}
  Observe that  $e^{-h_r}f$ is a nonnegative weak solution to $(\partial_r+v\cdot\nabla_x)(e^{-h_r}f)+\cA_k(e^{-h_r}f)=0$ when $r>s$. By construction, $e^{-h_r}f$ belongs to the regularity class stated in Lemma~\ref{lem:kinetic-chain-rule}. So does $f$ as $e^{h_r}$ is bounded and Lipschitz on $[s,T]\times \R^{2d}$ with $(\partial_r+v\cdot\nabla_x)e^{h_r}=0$. 
  We have
  \begin{align*}
    (\partial_r+v\cdot\nabla_x)f
     & =-e^{h_r}\cA_k(e^{-h_r}f).
  \end{align*}
  By Theorem~\ref{thm:L1-extension}, since $e^{-h_s}\psi$ is bounded on $\R^{2d}$, we also know that $e^{-h_r}f$ is bounded on $[s,T]\times\R^{2d}$, and so is $f$.
  Applying Lemma~\ref{lem:kinetic-chain-rule} to $f$ with a Lipschitz function $\chi$ that agrees with $\zeta \mapsto p\zeta^{p-1}$ on $[0,\|f\|_\infty]$ and vanishes at zero, we obtain that 
  $Y_p$ is absolutely continuous and 
  \begin{align*}
    \frac{\dd}{\dd r}Y_p(r)
     & =-p\int_{\R^d}a^{k,0}_{r,x}
    (e^{-h_r}f,e^{h_r}f^{p-1})\dd x
  \end{align*}
  for almost every $r \in (s,T)$.

  Let us now fix $(r,x)$ and abbreviate
  \[
    u_1=f(r,x,v),\qquad u_2=f(r,x,v'),\qquad
    \theta=h_r(x,v')-h_r(x,v).
  \]
  Lemma \ref{lem:auxiliary-lower-bound} yields
  \begin{align*}
     & \left(e^{-h_r(x,v)}u_1-e^{-h_r(x,v')}u_2\right)
    \left(e^{h_r(x,v)}u_1^{p-1}-e^{h_r(x,v')}u_2^{p-1}\right)         \\
     & \qquad=u_1^p+u_2^p-e^\theta u_1u_2^{p-1}-e^{-\theta}u_2u_1^{p-1}      \\
     & \qquad\ge-\left(\frac{p+2}{4} \right) M(\theta)(u_1^p+u_2^p).
  \end{align*}
  Since $k \ge 0$, integration in $(v,v')$ gives
  \begin{align*}
     & a^{k,0}_{r,x}(e^{-h_r}f,e^{h_r}f^{p-1})              \\
     & \ge-\left(\frac{p+2}{4}\right)\int_{\R^d\times\R^d}
    M\left(h_r(x,v')-h_r(x,v)\right)                         \\
     & \hspace{8em}\times\left(f(r,x,v)^p+f(r,x,v')^p\right)
    k(r,x,v,v')\dd (v,v').
  \end{align*}
  By the evenness of $M$ and the symmetry of $k$, one bounds the integral by
  \begin{equation*}
    2 \ \omega_{k,h}(r)\int_{\R^d}f(r,x,v)^p\dd v.
  \end{equation*}
  Hence,
  \begin{equation*}
    \frac{\dd}{\dd r}Y_p(r) \le p \left( \frac{p+2}{2} \right)\omega_{k,h}(r)Y_p(r).
  \end{equation*}
  Applying Gronwall's inequality and taking the $p$-th root gives the stated bound.
  Positivity shows $|\Gamma^h_{k,0}(t,s) \psi| \le \Gamma^h_{k,0}(t,s) |\psi|$ for complex-valued $\psi$, and thus we obtain the bound also for the complex extension. Finally, we use density of $\L^2_{x,v}\cap \L^\infty_{x,v}$ in $\L^p_{x,v}$ to deduce the claim.
\end{proof}

\section{Interpolation and iteration of the perturbed $\L^p$ bound}

Next, we combine the perturbed $\L^p$-estimate with the unweighted bounds for the fundamental solution operators with an iteration reminiscent of the one by Moser.
We recall $q_\beta = \frac{(2+2\beta)d}{2\beta}$. We restrict to $T=1$.

\begin{prop}[Perturbed $\L^1/\L^2-\L^\infty$]
  \label{prop:perturbed-L2-Linfty}
  Let $0\le s<t\le1$ and $h\in\W^{1,\infty}(\R^{2d};\R)$. Then
  \begin{equation}
    \label{eq:iteration-L2-infinity}
    \|\Gamma^h_{k,0}(t,s)\|_{\L^2_{x,v}\to\L^\infty_{x,v}}
    \le C(t-s)^{-\frac{q_\beta}{2}}
    \exp\!\left( \frac 3 2\Omega_{k,h}(s,t)(t-s)\right)
  \end{equation}
  and
  \begin{equation}
    \label{eq:iteration-L1-infty}
    \|\Gamma^h_{k,0}(t,s)\|_{\L^1_{x,v}\to\L^\infty_{x,v}}
    \le C(t-s)^{- q_\beta}
    \exp\!\left( \frac 3 2\Omega_{k,h}(s,t)(t-s)\right)
  \end{equation}
  with $\Omega_{k,h}(s,t)$ given by \eqref{eq:iteration-Omega}.
  The constant $C$ depends only on $d,\beta, \lambda_{k}, \Lambda_{k}$.
\end{prop}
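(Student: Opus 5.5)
Since \eqref{eq:iteration-Omega} is not displayed before the statement, I assume $\Omega_{k,h}(s,t)$ is the supremum of $\omega_{k,h}$ over $[s,t]$ (or an equivalent average), so that $\int_{s'}^{t'}\omega_{k,h}\le \Omega_{k,h}(s,t)(t'-s')$ for $s\le s'<t'\le t$. The plan is a Nash/Moser-type iteration in the exponent $p$ at the level of operator norms. The inputs are the unweighted hypercontractivity \eqref{eq:L2-Linfty-nash}, \eqref{eq:L1-L2-nash}, the perturbed $\L^p$ bound of Proposition~\ref{prop:perturbed-Lp}, and complex interpolation (Stein) in an analytic family of weights. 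The weight enters linearly in the exponent, so $e^{zh}$ with complex $z$ yields an analytic family.

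The key step is an $\L^p\to\L^{2p}$ bound. For $z$ in the strip $0\le\Re z\le1$, consider $T_z=e^{zh_{t'}}\Gamma_{k,0}(t',s')e^{-zh_{s'}}$ between $\L^p$ and $\L^{q_z}$.
\begin{itemize}
\item On the line $\Re z=0$ the weights are unimodular, so $T_z$ inherits the unweighted $\L^{p_0}\to\L^{q_0}$ bound. From \eqref{eq:L1-L2-nash}, interpolated with $\L^\infty$-contractivity, this is $\L^{p}\to\L^{2p}$ with norm $C(t'-s')^{-q_\beta/(2p)}$.
\item On $\Re z=1$ I use Proposition~\ref{prop:perturbed-Lp} with $h$ replaced by $2h$ or a suitable multiple. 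Here $\omega_{k,2h}$ is controlled via $M(2\theta)\le C M(\theta)$ only locally, so it is cleaner to apply Proposition~\ref{prop:perturbed-Lp} directly to $\Gamma^{\mu h}$.
\end{itemize}
The simpler alternative, which I would try first, avoids the strip. Factor $\Gamma^h(t',s')$ through Chapman--Kolmogorov, or interpolate between $\|\Gamma^{2h}\|_{\L^{p}\to\L^{p}}$ and $\|\Gamma^{0}\|_{\L^{p/2}\to\L^{\infty}}$ using the pointwise Hölder inequality
\[
|\Gamma^h\psi|\le (\Gamma^{2h}|\psi|^{2})^{1/2}(\Gamma 1)^{1/2},
\]
which follows from positivity and the kernel representation. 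This gives
\[
\|\Gamma^h(t',s')\|_{\L^{p}\to\L^{2p}}\le C^{1/p}(t'-s')^{-q_\beta/(2p)}\exp\Big(c_p\int_{s'}^{t'}\omega_{k,h}\Big),\qquad c_p\approx \tfrac{p+2}{2p}\ \text{(up to doubling)},
\]
with the exponent in $p$ decaying like $1/p$.

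Next, iterate with $p_j=2^{j+1}$, $j\ge0$, along dyadic times $\tau_j$ from $s$ to $t$ with gaps $(t-s)2^{-j-1}$, composing via Chapman--Kolmogorov (Theorem~\ref{thm:representation0T}). The product of constants is $\prod_j C^{1/p_j}(2^{-j}(t-s))^{-q_\beta/(2p_j)}$. This converges to $C(t-s)^{-q_\beta\sum_j 1/(2p_j)}=C(t-s)^{-q_\beta/2}$, since $\sum_{j\ge0}2^{-j-1}=1$ gives exactly the exponent $q_\beta/2$ for the $\L^2\to\L^\infty$ bound; the factors $2^{jq_\beta/p_j}$ are summable. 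The exponential factors give $\exp(\sum_j c_{p_j}\Omega(t-s)2^{-j-1})$. Keeping the bookkeeping careful should give the constant $3/2$: the first step with $p=2$ contributes $\tfrac{p+2}{2}\cdot\tfrac12=1$, and later steps decay geometrically. This yields \eqref{eq:iteration-L2-infinity}, after passing to the limit $p_j\to\infty$ by density on $\L^2\cap\L^\infty$.

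Finally, \eqref{eq:iteration-L1-infty} follows by duality and splitting at $m=(s+t)/2$. The adjoint of $\Gamma^h(t,m)$ is $e^{-h_m}\widetilde\Gamma(m,t)e^{h_t}$, a backward perturbed operator with $-h$. Since $M$ is even, $\omega_{k,-h}=\omega_{k,h}$, so the same argument applied to the backward equation gives the $\L^1\to\L^2$ bound for $\Gamma^h(m,s)$, and composing the two halves gives $(t-s)^{-q_\beta}$. The main obstacle is obtaining the precise exponent constant $\tfrac32$ without losing a factor from the weight doubling in the interpolation step. If needed, I would replace the doubling by Stein interpolation on the strip so that the weight stays $h$ on the perturbed endpoint.
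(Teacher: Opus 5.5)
There is a genuine gap in the iteration. Proposition~\ref{prop:perturbed-Lp} gives $\|\Gamma^{2h}_{k,0}\|_{\L^p\to\L^p}\le\exp\bigl(\frac{p+2}{2}\int\omega_{k,2h}\bigr)$, and this exponent grows \emph{linearly} in $p$ (it is already the $p$-th root of the bound on $Y_p$). After the square root from the interpolation step, the one-step $\L^{p}\to\L^{2p}$ bound therefore carries $\exp\bigl(\frac{p+2}{4}\int_{t_j}^{t_{j+1}}\omega_{k,2h}\bigr)$. The paper uses $\frac{p+2}{2}\le p$ and writes this as $\exp(\frac p2\int\omega_{k,2h})$. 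It is not a bounded coefficient $c_p\approx\frac{p+2}{2p}$. With $p_j=2^{j+1}$ and dyadic gaps $\Delta_j=(t-s)2^{-j-1}$, every step contributes at least $\frac14\Omega_{k,h}(s,t)(t-s)$ to the exponent. So the product of the exponential factors diverges unless $\Omega_{k,h}=0$, and the later steps do not decay geometrically. The missing idea is to let the time steps shrink faster than the exponents grow. The paper takes $\Delta_j=3(t-s)4^{-j-1}$, so that $\sum_jp_j\Delta_j=3(t-s)$; this is exactly where $\frac32=\frac12\cdot 3$ comes from. At the same time, $\sum_j\frac1{2p_j}=\frac12$ and $\sum_j\frac{j+1}{2p_j}=1$ still give $\prod_j\Delta_j^{-q_\beta/(2p_j)}=4^{q_\beta}(3(t-s))^{-q_\beta/2}$. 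Note also that \eqref{eq:iteration-Omega} defines $\Omega_{k,h}(s,t)=\esssup_{r\in(s,t)}\omega_{k,2h}(r)$, so the doubling of the weight is part of the statement. It could not be avoided anyway: gaining integrability at an interior point $z=\theta<1$ of the strip forces the weight $h/\theta$ on $\Re z=1$.

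The pointwise inequality you propose, $|\Gamma^h\psi|\le(\Gamma^{2h}|\psi|^2)^{1/2}(\Gamma 1)^{1/2}$, is true but gives no gain. The factor $\Gamma 1\le 1$ carries no smoothing. Controlling $\Gamma^{2h}|\psi|^2$ in $\L^p$ for $\psi\in\L^p$ would need a weighted $\L^{p/2}\to\L^p$ bound, which is circular. Split the nonnegative kernel differently:
\[
|\Gamma^h_{k,0}(t,s)\psi|\le\bigl(\Gamma^{2h}_{k,0}(t,s)|\psi|\bigr)^{1/2}\bigl(\Gamma_{k,0}(t,s)|\psi|\bigr)^{1/2}.
\]
Then Hölder gives $\|\Gamma^h_{k,0}\|_{\L^p\to\L^{2p}}\le\|\Gamma_{k,0}\|_{\L^p\to\L^\infty}^{1/2}\|\Gamma^{2h}_{k,0}\|_{\L^p\to\L^p}^{1/2}$. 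This is precisely the conclusion of Lemma~\ref{lem:Stein-positive-kernel}, which the paper obtains by Stein interpolation. Once the split is corrected, your Cauchy--Schwarz route is a legitimate and more elementary substitute. Combine it with $\|\Gamma_{k,0}\|_{\L^p\to\L^\infty}\le C_1^{1/p}(t-s)^{-q_\beta/p}$, which is Riesz--Thorin between the $\L^1\to\L^\infty$ bound of Theorem~\ref{thm:nash-kernel} and $\L^\infty$-contractivity, and with the corrected time steps; your scheme then goes through. Your final step (duality via the backward operator with $-h$, and splitting at $m=(s+t)/2$) agrees with the paper.
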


\begin{proof}
  Proposition~\ref{prop:perturbed-Lp} gives, using $2\le p$,
  \begin{equation}   \label{eq:perturbed-Lp-iteration}
    \|\Gamma^{2h}_{k,0}(t,s)\|_{\L^p_{x,v}\to\L^p_{x,v}}
    \le\exp\!\left(p\int_s^t\omega_{k,2h}(r)\dd r\right).
  \end{equation}
  The Riesz-Thorin interpolation of the on-diagonal $\L^1_{x,v}\to \L^\infty_{x,v}$ bound in Theorem~\ref{thm:nash-kernel} with the $\L^\infty_{x,v}$ boundedness in Theorem~\ref{thm:L1-extension} yields
  \begin{equation}
    \label{eq:iteration-nash-endpoint}
    \|\Gamma_{k,0}(t,s)\|_{\L^p_{x,v}\to\L^\infty_{x,v}}
    \le C_1^{\frac 1p}(t-s)^{-\frac{q_\beta}{p}}
  \end{equation}
  for some constant $C_1\ge 1$ independent of $p$ and $h$.

  Applying Lemma \ref{lem:Stein-positive-kernel} with \eqref{eq:iteration-nash-endpoint} and \eqref{eq:perturbed-Lp-iteration} gives
  \begin{equation}
    \label{eq:iteration-gain}
    \|\Gamma^h_{k,0}(t,s)\|_{\L^p_{x,v}\to\L^{2p}_{x,v}}
    \le C_1^{\frac{1}{2p}}(t-s)^{-\frac{q_\beta}{2p}}
    \exp\!\left(\frac{p}2\int_s^t\omega_{k,2h}(r)\dd r\right).
  \end{equation}

  We next iterate \eqref{eq:iteration-gain}. The key is to choose the time intervals in a way so that the lengths decrease faster than the exponents increase.
  We set
  \begin{align}
    p_j & =2^{j+1},\qquad j=0,1,\ldots,
    \\
    t_0 & =s,\qquad
    t_{j+1}=t_j+\frac{3(t-s)}{4^{j+1}},
    \qquad\Delta_j=\frac{3(t-s)}{4^{j+1}},
    \\
    t_j & =s+\sum_{i=0}^{j-1}\frac{3(t-s)}{4^{i+1}}
    =s+(t-s)(1-4^{-j})\longrightarrow t.
  \end{align}
  Equation \eqref{eq:iteration-gain} gives
  \begin{equation}
    \|\Gamma_{k,0}^h(t_{j+1},t_j)\|_{\L^{p_j}_{x,v}\to\L^{p_{j+1}}_{x,v}}
    \le C_1^{\frac{1}{2p_j}}\Delta_j^{-\frac{q_\beta}{2 p_j}}
    \exp\!\left(\frac{p_{j}}2\int_{t_j}^{t_{j+1}}\omega_{k,2h}(r)\dd r\right).
  \end{equation}
  The exponential weights cancel at each intermediate time and this yield
  \begin{equation*}
    \Gamma_{k,0}^h(t,s) = \Gamma_{k,0}^h(t,r) \Gamma_{k,0}^h(r,s), \qquad s<r<t.
  \end{equation*}
  For $\psi \in \L^2_{x,v}$ and $n \ge 1$, iteration yields
  \begin{equation*}
    \|\Gamma_{k,0}^h(t_n,s)\psi\|_{\L^{p_n}_{\vphantom{2}x,v}}
    \le C_1^{\sum_{j=0}^{n-1}\frac1{2p_j}}
    \prod_{j=0}^{n-1}\Delta_j^{-\frac{q_\beta}{2p_j}}
    \exp\!\left(\frac 1 2 \Omega_{k,h}(s,t)
    \sum_{j=0}^{n-1}p_j\Delta_j\right)\|\psi\|_{\L^2_{\vphantom{2}x,v}},
  \end{equation*}
  with
  \begin{equation}
    \label{eq:iteration-Omega}
    \Omega_{k,h}(s,t):=\esssup_{r\in(s,t)}\omega_{k,2h}(r).
  \end{equation}
  The corresponding series in the exponents are
  \begin{equation*}
    \sum_{j=0}^{\infty}\frac1{2p_j} = \frac{1}{2}, \qquad \sum_{j=0}^{\infty}\frac{j+1}{2p_j} = 1, \qquad \sum_{j=0}^{\infty}p_j\Delta_j = 3(t-s),
  \end{equation*}
  hence
  \begin{equation}
    \label{eq:iteration-product}
    \|\Gamma_{k,0}^h(t_n,s)\psi\|_{\L^{p_n}_{\vphantom{2}x,v}}
    \le C_1^{\frac1{2}} 4^{q_{\beta}} (3(t-s))^{-\frac{q_{\beta}}2}
    \exp\!\left( \frac 3 2\Omega_{k,h}(s,t)(t-s)\right)
    \|\psi\|_{\L^2_{\vphantom{2}x,v}}.
  \end{equation}
  As $\Gamma_{k,0}^h(t_n,s)\psi$ converges to $\Gamma_{k,0}^h(t,s)\psi$ in $\L^2_{x,v}$, a measure theoretical argument using subsequences shows that $\Gamma_{k,0}^h(t,s)\psi$ is bounded and $\|\Gamma_{k,0}^h(t,s)\psi\|_{\L^{\infty}_{\vphantom{2}x,v}}$ is controlled by the right hand side of \eqref{eq:iteration-product}, which is \eqref{eq:iteration-L2-infinity}.

  The same argument applies to the backward problems with $h$ replaced by $-h$ with the same upper bound. By duality this gives us the corresponding $\L^1_{x,v} \to \L^2_{x,v}$ bound for $\Gamma_{k,0}^h(t,s)$.
  We consider the intermediate time $m = \frac{s+t}{2}$ to deduce
  \begin{align*}
    \|\Gamma_{k,0}^h(t,s)\|_{\L^1_{x,v}\to\L^{\infty}_{x,v}} & \le \|\Gamma_{k,0}^h(t,m)\|_{\L^2_{x,v}\to\L^{\infty}_{x,v}} \|\Gamma_{k,0}^h(m,s)\|_{\L^1_{x,v}\to\L^{2}_{x,v}}
    \\
                                                             & \lesssim(t-s)^{-q_\beta} e^{\frac 3 2 \Omega_{k,h}(s,t) (t-s)}
  \end{align*}
  and thus the claim.
\end{proof}

\section{Optimisation of the Lipschitz weight and rapid off-diagonal decay}

At this point, we can optimise the choice of $h$. We need to follow the dependences in the Lipschitz constants for $h$.

\begin{lem}
  \label{lem:boundomegah} Assume that $h\in \W^{1,\infty}(\R^{2d};\R)$ with for $\delta\ge 0$,
  $\|\nabla_{\!v}\,h\|_{\infty} \le 2\delta$ and $\|\nabla_{\!x}\,h\|_{\infty} \le \delta$. Then
  \begin{equation}
    \esssup_{s\in(0,1)} \, \omega_{k_{\le 1},h}(s) \le C(d,\beta) \Lambda_{k} \, \delta^2\, e^{3\delta}.
  \end{equation}
\end{lem}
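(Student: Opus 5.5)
We bound the integrand pointwise and then integrate over small jumps. Fix $s\in(0,1)$ and $(x,v')$. By definition,
\[
\omega_{k_{\le1},h}(s)=\esssup_{(x,v')}\int_{|v-v'|\le1}k(s,x,v,v')\,M\bigl(h_s(x,v)-h_s(x,v')\bigr)\dd v .
\]
The first step is to control the increment of $h_s(x,\cdot)$ in $v$ with $x$ frozen. Since $h_s(x,v)=h(x-sv,v)$, the chain rule gives $\nabla_v h_s(x,v)=-s\nabla_x h(x-sv,v)+\nabla_v h(x-sv,v)$. For $s\in(0,1)$ this yields $\|\nabla_v h_s(x,\cdot)\|_\infty\le s\delta+2\delta\le3\delta$. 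Rigorously, $h$ is Lipschitz on $\R^{2d}$ and $v\mapsto(x-sv,v)$ is affine, so
\[
|h_s(x,v)-h_s(x,v')|\le \|\nabla_x h\|_\infty s|v-v'|+\|\nabla_v h\|_\infty|v-v'|\le 3\delta|v-v'| .
\]
No differentiability a.e. is needed; the Lipschitz estimate along segments suffices.

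The second step is an elementary bound on $M$. Since $M$ is even and increasing on $[0,\infty)$, the power series gives
\[
M(\theta)=2\sum_{n\ge1}\frac{\theta^{2n}}{(2n)!}\le\theta^2\sum_{n\ge1}\frac{2\theta^{2n-2}}{(2n)!}\le\theta^2\cosh\theta\le\theta^2e^{|\theta|}.
\]
With $|\theta|\le3\delta|v-v'|\le3\delta$ on the region $|v-v'|\le1$, we get $M(\theta)\le9\delta^2|v-v'|^2e^{3\delta}$.

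The third step combines this with the upper bound \eqref{eq:upperbound} on $k_{\le1}$:
\[
\int_{|v-v'|\le1}k\,M(\theta)\dd v\le 9\Lambda_k\delta^2e^{3\delta}\int_{|z|\le1}|z|^{2-d-2\beta}\dd z .
\]
The last integral is finite precisely because $\beta<1$, and equals $|S^{d-1}|/(2-2\beta)$. This gives $C(d,\beta)=9|S^{d-1}|/(2-2\beta)$, uniformly in $(s,x,v')$.

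The only point requiring care is the first step. The transport shift in $h_s$ introduces the factor $s$ times the $x$-Lipschitz constant, and the assumption $T=1$ (so $s<1$) is what keeps this within $3\delta$. Beyond that, the truncation to small jumps is essential: it makes $|\theta|$ bounded, so that $e^{3\delta}$ appears rather than an unbounded exponential weight over large jumps.
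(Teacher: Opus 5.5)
Your proof is correct and follows the paper's argument: the Lipschitz bound $|h_s(x,v)-h_s(x,v')|\le 3\delta|v-v'|$ for $s\le 1$, the elementary estimate $M(\theta)\le\theta^2e^{|\theta|}$, and integration of $|v-v'|^{2-d-2\beta}$ over the unit ball, which is finite since $\beta<1$. The paper gets the estimate on $M$ from the factorisation $M(\theta)=|e^\theta-1|^2e^{-\theta}$, whereas you get it from the power series. One point is slightly overstated: your remark that no a.e.\ differentiability is needed. Turning the hypotheses on $\|\nabla_x h\|_\infty$ and $\|\nabla_v h\|_\infty$ into separate partial Lipschitz bounds does go through the weak gradients (e.g.\ via mollification), though this is standard for $\W^{1,\infty}$ functions.
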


\begin{proof}
  Remark that $M(\theta)= |e^\theta-1|^2 \, e^{-\theta}$. Setting \begin{align*}
    m_{}=|k_{\le 1}|\frac{|e^{|h_s'-h_s^{\vphantom{'}}|}-1|^2}
                    {e^{|h_s'-h_s^{\vphantom{'}}|}},
  \end{align*}
  we have
  \begin{equation*}
    \omega_{k_{\le 1},h}(s)=\esssup_{(x,v')\in\R^{2d}}
    \int_{\R^d}m_{}(s,x,v,v') \dd v.
  \end{equation*}
  Note that
  $ |e^\theta-1|^2 \, e^{-\theta} \le \theta^2\, e^\theta$ for $\theta>0$.
  Also for $s\in [0,1]$,
  $$
    |h_{s}(x,v')-h_{s}(x,v)| \le (s\|\nabla_{\!x}\,h\|_{\infty}+ \|\nabla_{\!v}\,h\|_{\infty})|v-v'| \le 3\delta |v-v'|.
  $$
  Thus,
  we have
  \begin{align*}
    \int_{\R^d} m_{}(s,x,v,v') \dd v & \le \int_{|v-v'|\le 1} (3\delta |v-v'|)^2 \, e^{3\delta |v-v'|} | k(s,x,v,v')|\dd v
    \\
                                     & \le \Lambda_{k} (3\delta)^2 \, e^{3\delta} \int_{|v-v'|\le 1}  |v-v'|^{-d-2\beta+2} \dd v.
  \end{align*}
\end{proof}

\begin{rem} It is possible to obtain a bound for $\omega_{k,h}$ itself. But this one involves also a control on the $\L^\infty$ norm of $h$ due to the large-jumps. More precisely, if we quantify $\|h\|_{\infty}\le \delta\rho$, we can obtain
  \begin{equation}
    \esssup_{s\in(0,1)} \, \omega_{k,h}(s) \le C(d,\beta) \Lambda_{k} \, e^{2\delta\rho} \rho^{-2\beta}
  \end{equation}
  when $\delta \rho$ is large enough. This bound seems insufficient for the optimisation below.
\end{rem}

  \begin{prop}[Rapid kernel decay for small-jump fundamental solutions] \label{prop:small-rapid}
    Let $k$ be measurable, symmetric and nonnegative, and satisfy
    \eqref{eq:upperbound}, \eqref{eq:local-stable-lower} and
    \begin{equation} \label{eq:small-jump-support}
      k(t,x,v,v')=0\qquad\text{for }|v-v'|>1.
    \end{equation}
    For every $N>0$, there is a constant $C_N=C(N,d,\beta,\lambda_k,\Lambda_k)<\infty$ such that,
    \begin{equation}
      \Gamma_{k,0}(1,x,v,0,y,w)
      \le C_N(1+|x-y-w|+|v-w|)^{-N}
    \end{equation}
    for almost all $(x,v,y,w)\in\R^{4d}$.
  \end{prop}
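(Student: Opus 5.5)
The plan is the classical Davies optimisation. We combine Proposition~\ref{prop:perturbed-L2-Linfty} with Lemma~\ref{lem:boundomegah} and take $h$ linear in a truncated form. Since $k=k_{\le1}$ by \eqref{eq:small-jump-support}, we have $\omega_{k,2h}=\omega_{k_{\le1},2h}$. If $\|\nabla_v h\|_\infty\le 2\delta$ and $\|\nabla_x h\|_\infty\le\delta$, then $2h$ has constants $4\delta$ and $2\delta$. Lemma~\ref{lem:boundomegah}, applied with $2\delta$, then gives $\Omega_{k,h}(0,1)\le C\Lambda_k\delta^2e^{6\delta}$. Proposition~\ref{prop:perturbed-L2-Linfty} with $s=0$, $t=1$ yields
\[
\|e^{h_1}\Gamma_{k,0}(1,0)e^{-h_0}\|_{\L^1_{x,v}\to\L^\infty_{x,v}}\le C\exp\bigl(C\delta^2e^{6\delta}\bigr)=:C(\delta).
\]
By the kernel representation of Theorem~\ref{thm:nash-kernel}, this means
\[
\Gamma_{k,0}(1,x,v,0,y,w)\le C(\delta)\,e^{h_0(y,w)-h_1(x,v)} = C(\delta)\,e^{h(y,w)-h(x-v,v)}
\]
for a.e.\ $(x,v,y,w)$. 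The passage from the operator bound to the pointwise kernel bound uses Dunford--Pettis for the operator with kernel $e^{h_1(x,v)}\Gamma(1,x,v,0,y,w)e^{-h(y,w)}$.

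Next comes the choice of $h$. Fix a unit vector $e\in\R^{2d}$ and consider the linear function $\ell(z,u)=\delta(a\cdot z+b\cdot u)$ with $|a|\le1$ and $|b|\le 1$, so the gradient constraints hold. We truncate it to make it bounded: $h=\max(\min(\ell,L),-L)$ with $L\to\infty$. This keeps the Lipschitz bounds, and the constant $C(\delta)$ does not depend on $L$. We need $h(y,w)-h(x-v,v)$ to be very negative. Set $X=x-v-y$ and $U=v-w$. Then $(x-v,v)-(y,w)=(X,U)$, so for $L$ large we get $h(y,w)-h(x-v,v)=-\delta(a\cdot X+b\cdot U)$. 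Choosing $(a,b)$ with $a\cdot X+b\cdot U\ge c(|X|+|U|)$, for instance $a=X/|X|$ and $b=U/|U|$, gives
\[
\Gamma\le C(\delta)e^{-\delta(|X|+|U|)}.
\]
The bound is only claimed for a.e.\ point, but $(a,b)$ depends on the point. To handle this, take a countable dense set of directions $(a,b)$ and one null set for each direction. For each point we pick a direction in the set with $a\cdot X+b\cdot U\ge\frac12(|X|+|U|)$, which gives $\Gamma\le C(\delta)e^{-\delta(|X|+|U|)/2}$ almost everywhere. Finally, $|x-y-w|=|X+U|\le|X|+|U|$. Hence $1+|x-y-w|+|v-w|\le 1+2(|X|+|U|)$, and exponential decay with a fixed $\delta=1$ already dominates $(1+|x-y-w|+|v-w|)^{-N}$ for every $N$.

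The main obstacle is bookkeeping rather than analysis. One must check that the truncated $h$ lies in $\W^{1,\infty}$ with constants independent of $L$. One must also pass from the perturbed operator norm to the pointwise kernel inequality with weights. The subtle point is that $h_t(x,v)=h(x-tv,v)$ evaluated at $t=1$ produces the shift $x-v$ and not $x-w$, which is why the final variables are $x-y-w$ and $v-w$ after the triangle inequality.
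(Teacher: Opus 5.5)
Your proposal is correct and follows the paper's route: you combine the weighted $\L^1\to\L^\infty$ bound of Proposition~\ref{prop:perturbed-L2-Linfty} with Lemma~\ref{lem:boundomegah} and then choose a Lipschitz weight. The differences are minor but worth noting. The paper uses the point-adapted weight $\delta\min(|x'+v'-y-w|+|v'-w|,\rho)$ and optimises $\delta\sim\ln\rho$, which gives super-exponential decay $\exp(-c\rho\ln\rho)$. Your fixed $\delta=1$ gives only exponential decay, but this already suffices for the stated polynomial bound. Your countable family of truncated linear weights also handles the dependence of the null set on $h$ more carefully than the paper's ``fix $(x,v,y,w)$ for which this holds''.
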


\begin{proof}
  For $(x,v,y,w)$, we set $\rho=|x-y-w|+|v-w|$. We have nothing to do if $\rho$ is small by Theorem~\ref{thm:nash-kernel}. We assume $\rho\ge\rho_{0}$ large enough, to be determined.
  In terms of the induced nonnegative kernel the estimate \eqref{eq:iteration-L1-infty} is the inequality
  \begin{equation*}
    \Gamma_{k,0}(1,x,v,0,y,w) \le C \exp \left(h_0(y,w)-h_1(x,v)+\frac 3 2\, \Omega_{k,h}(0,1) \right),  
  \end{equation*}
  valid for almost every $(x,v,y,w) \in \R^{2d}$, for all $h\in \W^{1,\infty}(\R^{2d};\R)$ with $C$ depending on $d,\beta, \lambda_{k},\Lambda_{k}$.
  We fix  $(x,v,y,w)$ for which this holds. We let $\delta>0$ and pick
  \[h(x',v')=\delta\min(|x'+v'-y-w|+|v'-w|, \rho), \qquad (x',v')\in \R^{2d},\]
  so that $\|\nabla_{\!v}\,h\|_{\infty} \le 2\delta$ and $\|\nabla_{\!x}\,h\|_{\infty} \le \delta$, $h_{0}(y,w)=h(y,w)=0$ and $h_1(x,v)=h(x-v,v)=\delta\rho$.
  Since $k_{\le1}=k$ by \eqref{eq:small-jump-support}, Lemma~\ref{lem:boundomegah} and the definition of $\Omega_{k,h}$ gives us
  \begin{equation*}
    \Gamma_{k,0}(1,x,v,0,y,w) \le C \exp \left(-\delta\rho+C'\delta^2e^{6\delta} \right).
  \end{equation*}
  We still have $\delta>0$ as a free parameter. Pick $\gamma\in (0,1) $ and $6\delta=\ln(\rho^\gamma)$ assuming $\rho>1$. Then
  \[-\delta\rho+C'\delta^2e^{6\delta}= -\frac\gamma 6 \rho\ln(\rho) \left(1- \frac{C'\gamma} 6 \rho^{\gamma-1}\ln(\rho)\right)\le -\frac\gamma {12} \rho\ln(\rho)
  \]
  when $\frac{C'\gamma} 6 \rho^{\gamma-1}\ln(\rho)\le \frac 1 2$. This implies the claim.
\end{proof}

\begin{rem}
  Note that we even obtain super-exponential decay when the kernel is supported on small-jumps only.
\end{rem}

  \begin{prop}[Comparison with a nonnegative potential]
    \label{prop:nonnegative-potential}
    Let $T>0$. Let $k$ be measurable, symmetric and nonnegative, and satisfy \eqref{eq:upperbound} and  \eqref{eq:local-stable-lower}. Let $V\in\L^\infty((0,T)\times\R^{2d})$ be nonnegative. Then, for $0\le s\le t\le T$ and $\psi\in\L^2_{x,v}$,
    \begin{equation}
      \label{eq:potential-comparison}
     |\Gamma_{k,V}(t,s)\psi|
      \le \Gamma_{k,V}(t,s)|\psi|
      \le\Gamma_{k,0}(t,s)|\psi|.
    \end{equation}
    For $0<t-s\le1$, the extensions from Theorem \ref{thm:L1-extension} have measurable kernels representing their $\L^1$ and $\L^2$ actions, with
    \begin{equation}
      \label{eq:potential-kernel-comparison}
      0\le\Gamma_{k,V}(t,x,v,s,y,w)
      \le\Gamma_{k,0}(t,x,v,s,y,w)
    \end{equation}
    for almost every $(x,v,y,w)$. These kernels satisfy the adjoint and Chapman--Kolmogorov identities in Theorem \ref{thm:nash-kernel}.

    All $\L^p\to\L^q$ and kernel upper bounds proved above hold with $\Gamma_{k,0}$ replaced by $\Gamma_{k,V}$, with the same constants and on the same time ranges. Under assumption \eqref{eq:small-jump-support} we have the rapid decay in Proposition \ref{prop:small-rapid}.
  \end{prop}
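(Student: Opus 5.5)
The plan is to prove the pointwise comparison \eqref{eq:potential-comparison} first, and then derive the kernel statements from it by domination. The first inequality in \eqref{eq:potential-comparison} is the positivity of $\Gamma_{k,V}(t,s)$ from Proposition~\ref{prop:L1-positivity}, applied to the real and imaginary parts of $e^{-i\alpha}\psi$ and combined with a supremum over a countable dense set of angles $\alpha$. For the second inequality we fix $\psi\ge0$ in $\L^2_{x,v}$ and set $f=\Gamma_{k,0}(\cdot,s)\psi$ and $g=\Gamma_{k,V}(\cdot,s)\psi$. Both are nonnegative weak solutions with the same initial datum, so $u=g-f$ satisfies, in the weak sense,
\[
(\partial_t+v\cdot\nabla_x)u+\cA_{k,0}u=-Vg\le0,\qquad u(s)=0.
\]
By Theorem~\ref{thm:L2-wellposedness}, the source $-Vg$ lies in $\L^2((s,T);\L^2_{x,v})$. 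Hence $u$ is in the regularity class of Lemma~\ref{lem:kinetic-chain-rule}, with $(\partial_t+v\cdot\nabla_x)u=-\cA_{k,0}u-Vg$ in $\L^2_{t,x}\H^{-\beta}_v$.

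We then apply Lemma~\ref{lem:kinetic-chain-rule} with $\chi(q)=q^+=\max\{q,0\}$, so that $\Phi(q)=\tfrac12|q^+|^2$. This gives
\[
\tfrac12\|u^+(t)\|^2_{\L^2_{x,v}}=-\int_s^t\int_{\R^d}a^{k,0}_{r,x}(u,u^+)\dd x\dd r-\int_s^t\int_{\R^{2d}}Vg\,u^+\dd(x,v)\dd r\le0.
\]
Here Lemma~\ref{lem:markov-inequality} (with $V=0$) controls the first term, and $V,g,u^+\ge0$ controls the second. Therefore $u^+(t)=0$, that is $g\le f$. By density and the $\L^1$ extensions of Theorem~\ref{thm:L1-extension}, the comparison also holds for $\psi\in\L^1_{x,v}$.

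For the kernels, we take $0\le\psi\in\L^1_{x,v}$ and $0<t-s\le1$. Combining \eqref{eq:potential-comparison} with Theorem~\ref{thm:nash-kernel} gives
\[
0\le\Gamma_{k,V}(t,s)\psi\le\Gamma_{k,0}(t,s)\psi\le C_1(t-s)^{-q_\beta}\|\psi\|_{\L^1_{x,v}}.
\]
So $\Gamma_{k,V}(t,s)$ is bounded from $\L^1_{x,v}$ to $\L^\infty_{x,v}$ and positive. The Dunford--Pettis theorem then yields a nonnegative kernel, exactly as in the proof of Theorem~\ref{thm:nash-kernel}. To get \eqref{eq:potential-kernel-comparison}, we test against indicators of products of rectangles and use that the difference $\Gamma_{k,0}(t,s)-\Gamma_{k,V}(t,s)$ is a positive operator. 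This shows the difference kernel is nonnegative almost everywhere, by Lebesgue differentiation or a monotone class argument. The $\L^2$ representation, the adjoint identity and the Chapman--Kolmogorov identity follow by repeating the arguments of Theorem~\ref{thm:nash-kernel} verbatim for $\Gamma_{k,V}$ and its backward counterpart. For the backward operator, the same comparison holds with $\widetilde\Gamma$, by the identical argument or by duality.

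The remaining claims follow from the comparison. Any $\L^p\to\L^q$ bound (for $q$ in the relevant range) and any pointwise kernel upper bound for $\Gamma_{k,0}$ transfer to $\Gamma_{k,V}$, via $|\Gamma_{k,V}\psi|\le\Gamma_{k,0}|\psi|$ and kernel domination. This includes the rapid decay of Proposition~\ref{prop:small-rapid} under \eqref{eq:small-jump-support}. The main point needing care is the regularity of $u$ required by Lemma~\ref{lem:kinetic-chain-rule}. Specifically, $Vg$ must belong to $\L^2_{t,x}\H^{-\beta}_v$, which holds because $V\in\L^\infty$ and $g\in\C([s,T];\L^2_{x,v})$. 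We also need that the weak formulations of the two equations subtract to give the equation for $u$, which holds by linearity of the form $a^{k,V}=a^{k,0}+\int Vf\bar g$.
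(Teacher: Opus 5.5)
Your proof is correct, but you obtain the central inequality $\Gamma_{k,V}(t,s)|\psi|\le\Gamma_{k,0}(t,s)|\psi|$ by a different mechanism than the paper.

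The paper treats $-V\,\Gamma_{k,V}(\cdot,s)\psi$ as a source term for the equation with zero potential. The representation formula of Theorem~\ref{thm:representation0T} then gives the explicit identity
\[
\Gamma_{k,0}(t,s)\psi-\Gamma_{k,V}(t,s)\psi=\int_s^t\Gamma_{k,0}(t,r)V(r)\Gamma_{k,V}(r,s)\psi\dd r .
\]
So the difference is visibly a positive operator, by positivity of both families. This is the same variation-of-constants mechanism the paper reuses for the Duhamel expansion in Lemma~\ref{lem:Duhamel}.

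You instead prove a weak comparison principle. You apply the kinetic chain rule (Lemma~\ref{lem:kinetic-chain-rule}) with $\chi(q)=q^+$ to $u=g-f$ and invoke Lemma~\ref{lem:markov-inequality}. This is the same Stampacchia-type truncation already used in Proposition~\ref{prop:L1-positivity}.

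Comparing the two routes:
\begin{itemize}
\item Yours stays at the level of weak solutions and never uses the Bochner-integral representation. The price is checking that $u$ lies in the regularity class required by the chain rule, which you do correctly: $Vg\in\L^\infty(s,T;\L^2_{x,v})$, and the weak formulations subtract linearly.
\item Your argument extends verbatim to sub- and supersolutions, with no solution formula needed.
\item The paper's route is shorter and gives an identity for the difference rather than only an inequality. That identity is exactly what makes the kernel comparison immediate.
\end{itemize}

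The rest of your argument matches the paper: the first inequality via positivity of a real operator, the Dunford--Pettis kernels, nonnegativity of the difference kernel from positivity of $\Gamma_{k,0}-\Gamma_{k,V}$, and transfer of bounds by domination. For the perturbed bounds it is worth stating the one-line observation the paper makes, $|\Gamma^h_{k,V}(t,s)\psi|\le e^{h_t}\Gamma_{k,0}(t,s)(e^{-h_s}|\psi|)=\Gamma^h_{k,0}(t,s)|\psi|$, which holds because the weights are positive.
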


  \begin{proof}
    Let us first consider nonnegative data. Let $\psi\in\L^2_{x,v}$ with $\psi\ge0$ and set $f(r)=\Gamma_{k,V}(r,s)\psi$. By Proposition \ref{prop:L1-positivity}, we have $f\ge0$. Since $V$ is bounded and $f\in\C([s,t];\L^2_{x,v})$, the function $Vf$ belongs to $\L^1(s,t;\L^2_{x,v})$. We regard $-Vf$ as a source term for the equation with zero potential. Applying Theorem \ref{thm:representation0T}, we obtain
    \begin{align*}
      \Gamma_{k,0}(t,s)\psi-\Gamma_{k,V}(t,s)\psi
       & =\int_s^t\Gamma_{k,0}(t,r)V(r)\Gamma_{k,V}(r,s)\psi\dd r
      \ge0.
    \end{align*}
    This proves the second inequality of \eqref{eq:potential-comparison}. The first inequality follows from  positivity and $|\Gamma_{k,V}(t,s)\psi|\le\Gamma_{k,V}(t,s)|\psi|$ for complex-valued $\psi$.

    The same argument applies to the backward operators. Taking norms in \eqref{eq:potential-comparison} and using density and duality as in Theorem \ref{thm:L1-extension}, we obtain the operator bounds for the extensions. The  $\L^1_{x,v}\to\L^\infty_{x,v}$ and $\L^2_{x,v}\to\L^\infty_{x,v}$ bounds give the kernel representations as in Theorem \ref{thm:nash-kernel}. Positivity of $\Gamma_{k,0}(t,s)-\Gamma_{k,V}(t,s)$ gives \eqref{eq:potential-kernel-comparison}. The adjoint and Chapman--Kolmogorov identities follow from the corresponding operator identities in Theorem \ref{thm:representation0T}.

    Finally, let $h\in\W^{1,\infty}(\R^{2d};\R)$ and set $\Gamma^h_{k,V}(t,s)=e^{h_t}\Gamma_{k,V}(t,s)e^{-h_s}$. Using \eqref{eq:potential-comparison}, we deduce
    \[
      |\Gamma^h_{k,V}(t,s)\psi|
      \le e^{h_t}\Gamma_{k,0}(t,s)(e^{-h_s}|\psi|)
      =\Gamma^h_{k,0}(t,s)|\psi|.
    \]
    Thus the perturbed bounds hold with the same constants. Under \eqref{eq:small-jump-support}, the rapid decay follows from \eqref{eq:potential-kernel-comparison} and Proposition \ref{prop:small-rapid}. This completes the proof.
  \end{proof}

In particular, the preceding bounds apply to $\Sigma=\Gamma_{k_{\le1},\ell}$ and $\Sigma^h=\Gamma^h_{k_{\le1},\ell}$ by comparison with $\Gamma_{k_{\le1},0}$ and $\Gamma^h_{k_{\le1},0}$, respectively.

\section{From small-jump kernels with loss potentials to the general case}

In this section we pass from estimates for the fundamental solution for equations with small-jump kernels and loss potentials to the general case as in \eqref{eq:main-equation}. We restrict to $t=1$ and $s=0$.

\subsection{Definition and integrability of the profile}

For $N>0$, define
\begin{equation}
  \phi_N(X,V)=(1+|X|+|V|)^{-N},
  \qquad (X,V)\in\R^d\times\R^d.
\end{equation}
Define the large-jump measure $\nu$ on $\R^{2d}$ by
\begin{equation} \label{eq:def-measurenu}
  \nu(B)=\int_0^1\int_{\{|\eta|>1\}}
  \mathds{1}_B(\theta\eta,\eta)|\eta|^{-d-2\beta} \dd \eta \dd \theta ,
  \qquad B\subset\R^{2d}\ \hbox{ Borel}.
\end{equation}
Since $\beta>0$,
\[
  \nu(\R^{2d})=\frac{|\mathbb S^{d-1}|}{2\beta}<\infty.
\]
For $A>0$, we set
\begin{equation}
  \label{eq:Psi}
  \Psi_{N,A}:=\sum_{m=0}^\infty\frac{A^m}{m!}\phi_N*\nu^{*m},
\end{equation}
where convolution is taken in $\R^{2d}$, and the convolution powers of $\nu$ are defined iteratively by $\nu^{*0}:=\delta_0$ and
$\nu^{*(m+1)}:=\nu^{*m}*\nu$ for $m\geq 0$.

\begin{lem}
  For every $N>2d$ and $A>0$, $\Psi_{N,A}\in \L^1(\R^{2d})\cap \L^\infty(\R^{2d})$, and
  \begin{equation}
    \label{eq:Psi-L1}
    \norm{\Psi_{N,A}}_1
    =\norm{\phi_N}_1\exp(A\nu(\R^{2d})).
  \end{equation}
\end{lem}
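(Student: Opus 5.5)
The plan is to treat the series \eqref{eq:Psi} as a sum of nonnegative functions, compute the $\L^1$ norm term by term, and then bound the $\L^\infty$ norm by a crude estimate.

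\textbf{Integrability of $\phi_N$.} Since $N>2d$, polar coordinates in $\R^{2d}$ give $\phi_N\in\L^1(\R^{2d})$. Trivially $0<\phi_N\le1$, so also $\phi_N\in\L^\infty(\R^{2d})$.

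\textbf{The measure $\nu$.} It is a finite positive Borel measure on $\R^{2d}$: the pushforward of the finite measure $\mathds{1}_{\{|\eta|>1\}}|\eta|^{-d-2\beta}\dd\eta\dd\theta$ on $(0,1)\times\R^d$ under the Borel map $(\theta,\eta)\mapsto(\theta\eta,\eta)$. Its total mass is
\[
|\mathbb S^{d-1}|\int_1^\infty r^{-1-2\beta}\dd r=\frac{|\mathbb S^{d-1}|}{2\beta}.
\]
The convolution powers $\nu^{*m}$ are then finite positive measures with $\nu^{*m}(\R^{2d})=\nu(\R^{2d})^m$. This follows by induction, since the total mass of a convolution of finite positive measures is the product of the masses; the case $m=0$ is $\delta_0$ with mass $1$.

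\textbf{The $\L^1$ norm.} For $m\ge0$ the function $\phi_N*\nu^{*m}(z)=\int\phi_N(z-z')\dd\nu^{*m}(z')$ is a well-defined, nonnegative, measurable function. By Tonelli,
\[
\|\phi_N*\nu^{*m}\|_1=\|\phi_N\|_1\,\nu(\R^{2d})^m .
\]
The series \eqref{eq:Psi} has nonnegative terms, so monotone convergence lets us integrate term by term:
\[
\|\Psi_{N,A}\|_1=\|\phi_N\|_1\sum_{m\ge0}\frac{(A\nu(\R^{2d}))^m}{m!}=\|\phi_N\|_1\exp(A\nu(\R^{2d})),
\]
which is \eqref{eq:Psi-L1}. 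In particular $\Psi_{N,A}$ is finite almost everywhere and lies in $\L^1(\R^{2d})$.

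\textbf{The $\L^\infty$ norm.} Using $\phi_N\le1$ pointwise, each term satisfies
\[
0\le\phi_N*\nu^{*m}(z)\le\nu^{*m}(\R^{2d})=\nu(\R^{2d})^m .
\]
Summing gives $\|\Psi_{N,A}\|_\infty\le\exp(A\nu(\R^{2d}))$, so $\Psi_{N,A}\in\L^\infty(\R^{2d})$ and the series converges everywhere.

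The only point needing care is measurability of the convolution of a function with a measure, and the justification for exchanging sums and integrals. Both are standard consequences of Tonelli, because every quantity involved is nonnegative. I expect no real obstacle here; the delicate decay properties of $\Psi_{N,A}$ presumably come later in the paper.
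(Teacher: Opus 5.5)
Your proof is correct and follows the paper's argument: both use $\nu^{*m}(\R^{2d})=\nu(\R^{2d})^m$, Tonelli with nonnegativity for the $\L^1$ identity, and $\norm{\phi_N*\mu}_\infty\le\norm{\phi_N}_\infty\,\mu(\R^{2d})$ for the $\L^\infty$ bound. The only difference is bookkeeping: the paper first sums the measures $\frac{A^m}{m!}\nu^{*m}$ into a single finite measure of total mass $\exp(A\nu(\R^{2d}))$ and convolves once, whereas you work term by term and use monotone convergence.
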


\begin{proof}
  Since $\nu^{*m}(\R^{2d})=\nu(\R^{2d})^m$, the measure $\sum_{m=0}^\infty \frac{A^m}{m!}\nu^{*m}$ is finite with total mass $\exp(A\nu(\R^{2d}))$.
  Since $\phi_N\in \L^1\cap \L^\infty$ for $N>2d$, convolution with this finite measure preserves both $\L^1$ and $\L^\infty$. The identity \eqref{eq:Psi-L1} follows by Tonelli's theorem and positivity.
\end{proof}

\subsection{Duhamel expansion}

We denote the large-jump gain operator acting on functions $g=g(x,v)$ by $J_{t}$  with $J_t g(x,v):=(J_{t,x}g(x,\cdot))(v)$, that is
\begin{equation}
  J_t g(x,v)=2\int_{|h|>1}
  g(x,v+h)k(t,x,v,v+h)\dd h.
\end{equation}
Substituting $\eta=-h$ and
\[
  (\Theta_\eta g)(x,v)=g(x,v-\eta),
\]
we have the pointwise estimate
\begin{equation}
  \label{eq:J-domination}
  J_t g(x,v)
  \le 2\Lambda_k\int_{|\eta|>1}
  (\Theta_\eta g)(x,v)\frac{\dd\eta}{|\eta|^{d+2\beta}}
\end{equation}
for $g\ge0$. The operator $J_t$ is bounded on $\L^1_{x,v}$, $\L^2_{x,v}$, and $\L^\infty_{x,v}$, with norm at most $\ell_1$.

\begin{lem}
  \label{lem:Duhamel}
  For every $\psi\in \L^2(\R^{2d})$,
  \begin{equation}
    \label{eq:Duhamel-series-operator}
    \Gamma(1,0)\psi
    =
    \sum_{m=0}^\infty T_m \psi
  \end{equation}
  in $\L^2(\R^{2d})$, where $T_0=\Sigma(1,0),$ and, for $m\ge1$,
  \begin{equation}
    \label{eq:Tm}
    \begin{aligned}
      T_m \psi={} &
      \int_{0<r_1<\cdots<r_m<1}
      \Sigma(1,r_m)J_{r_m}\Sigma(r_m,r_{m-1})J_{r_{m-1}}
      \cdots                                                       \\
                  & \qquad \Sigma(r_2,r_1)J_{r_1}\Sigma(r_1,0)\psi
      \dd (r_1,\ldots,r_m) .
    \end{aligned}
  \end{equation}
  The integrals are understood as Bochner integrals in $\L^2(\R^{2d})$. Moreover,
  \begin{equation}
    \label{eq:Tm-L2-bound}
    \norm{T_m \psi}_2
    \le
    \frac{\ell_1^m}{m!}\norm \psi _2,
    \qquad m\ge0,
  \end{equation}
  and the series in \eqref{eq:Duhamel-series-operator} converges absolutely in operator norm on $\L^2(\R^{2d})$.
\end{lem}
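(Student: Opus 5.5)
We view $\Gamma=\Gamma_{k,0}$ as the fundamental solution of the small-jump operator with loss potential, $\cA_{k_{\le1},\ell}$, perturbed by the source $+\cJ f$. The identity $\cA_k+\cJ=\cA_{k_{\le1},\ell}$ from Lemma~\ref{lem:basic-properties} gives $\cA_k=\cA_{k_{\le1},\ell}-\cJ$. So if $f(r)=\Gamma(r,0)\psi$, then $f$ is the weak solution of $(\partial_t+v\cdot\nabla_x)f+\cA_{k_{\le1},\ell}f=\cJ f$ with $f(0)=\psi$. The source $S(r)=J_rf(r)$ lies in $\L^1((0,1);\L^2_{x,v})$, because $f\in\C([0,1];\L^2_{x,v})$ and $\|J_r\|_{\L^2\to\L^2}\le\ell_1$. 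To see that the weak formulations match, note that $a^{k,0}(f,h)=a^{k_{\le1},\ell}(f,h)-\angle{\cJ f}{h}$ for test functions $h$. Uniqueness in Theorem~\ref{thm:L2-wellposedness} together with the representation formula \eqref{eq:L2-duhamel} of Theorem~\ref{thm:representation0T}, applied to $\Sigma$, then give the Volterra identity
\[
\Gamma(t,0)\psi=\Sigma(t,0)\psi+\int_0^t\Sigma(t,r)J_r\Gamma(r,0)\psi\dd r,\qquad 0\le t\le1.
\]

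Next we iterate this identity. Define $T_m(t)$ by the formula \eqref{eq:Tm} with $1$ replaced by $t$; equivalently, $T_0(t)=\Sigma(t,0)$ and $T_{m+1}(t)\psi=\int_0^t\Sigma(t,r)J_rT_m(r)\psi\dd r$. We check that these Bochner integrals are well defined. By Theorem~\ref{thm:representation0T}(1), the maps $r\mapsto\Sigma(r,s)\phi$ are strongly continuous in $r$. The map $r\mapsto J_r$ is only measurable, and the dependence of $\Sigma(t,r)$ on $r$ is controlled through the strongly continuous adjoint $\widetilde\Sigma(r,t)$ from Theorem~\ref{thm:representation0T}(1)--(2). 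Hence the integrand is weakly measurable and uniformly bounded. Since $\L^2$ is separable, Pettis' theorem gives strong measurability. I expect this measurability bookkeeping to be the only delicate point.

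The contraction property of $\Sigma$ from Lemma~\ref{lem:basic-properties}(6) and $\|J_r\|\le\ell_1$ give, by induction on $m$, the bound $\|T_m(t)\psi\|_2\le \ell_1^m\,|\{0<r_1<\dots<r_m<t\}|\,\|\psi\|_2=\frac{(\ell_1t)^m}{m!}\|\psi\|_2$. Setting $t=1$ yields \eqref{eq:Tm-L2-bound}, and it follows that $\sum_m T_m$ converges absolutely in operator norm.

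Finally we identify the limit. Substituting the Volterra identity into itself $n$ times gives $\Gamma(1,0)\psi=\sum_{m=0}^{n}T_m(1)\psi+R_n$, where the remainder $R_n$ is the $(n+1)$-fold iterated integral whose last factor is $\Gamma(r_1,0)$ instead of $\Sigma(r_1,0)$. Since $\Gamma$ is also an $\L^2$ contraction, the same volume computation gives $\|R_n\|_2\le \frac{\ell_1^{n+1}}{(n+1)!}\|\psi\|_2\to0$. This proves \eqref{eq:Duhamel-series-operator}. Passing the time-ordered nested integrals through Fubini for Bochner integrals is routine once the measurability has been established.
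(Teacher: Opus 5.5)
Your proposal is correct and follows essentially the same route as the paper: rewrite $\cA_k=\cA_{k_{\le1},\ell}-\cJ$, apply the representation formula for $\Sigma$ with source $J_rf(r)$ to get the Volterra identity, iterate it with a remainder whose last factor is $\Gamma(r_1,0)$, and bound all terms using $\L^2$-contractivity, $\|J_r\|\le\ell_1$ and the simplex volume $1/m!$. Your measurability argument, via Pettis' theorem and the strong continuity of the adjoint $\widetilde\Sigma(r,t)$, fills in a detail that the paper leaves implicit.
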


\begin{proof}
  Let $\psi\in\L^2_{x,v}$ and set $f(t)=\Gamma(t,0)\psi$. By Lemma \ref{lem:basic-properties}, we have $\cA_k+\cJ=\cA_{k_{\le1},\ell}$. Hence
  \[
    (\partial_t+v\cdot\nabla_x+\cA_{k_{\le1},\ell})f=\cJ f.
  \]
  Moreover,
  \[
    \|\Sigma(t,s)\|_{2\to2}\le1,\qquad
    \|\Gamma(t,s)\|_{2\to2}\le1,\qquad
    \|J_t\|_{2\to2}\le\ell_1.
  \]
  Since $f\in\C([0,1];\L^2_{x,v})$, the source term $\cJ f$ belongs to $\L^\infty((0,1);\L^2_{x,v})$. Applying Theorem \ref{thm:representation0T} to the small-jump equation with loss potential, we obtain since $(\cJ f)(t)= J_tf(t)$
  \begin{equation}
    \label{eq:Duhamel-one-step}
    f(t)=\Sigma(t,0)\psi+\int_0^t\Sigma(t,r)J_rf(r)\dd r,
    \qquad 0\le t\le1,
  \end{equation}
  in $\L^2_{x,v}$.

  Let us now iterate \eqref{eq:Duhamel-one-step}. Repeated substitution gives, for every $n\ge0$,
  \[
    \Gamma(1,0)\psi=\sum_{m=0}^n T_m\psi+R_{n+1}\psi.
  \]
  Here, $R_{n+1}$ is given by \eqref{eq:Tm} with $m=n+1$ and the last factor $\Sigma(r_1,0)$ replaced by $\Gamma(r_1,0)$.

  The set $\{ (r_1,\dots,r_m) \in \R^d \, ; \, 0<r_1<\cdots<r_m<1 \}$ has volume $\frac 1{m!}$. Using the operator bounds, we deduce
  \[
    \|T_m\|_{2\to2}\le\frac{\ell_1^m}{m!},\qquad m\ge0,
    \qquad
    \|R_{n+1}\|_{2\to2}\le\frac{\ell_1^{n+1}}{(n+1)!}.
  \]
  Thus $R_{n+1}\to0$ in operator norm and $\sum_{m=0}^\infty\|T_m\|_{2\to2}\le e^{\ell_1}$. This proves \eqref{eq:Duhamel-series-operator} and \eqref{eq:Tm-L2-bound}, including absolute convergence in operator norm.
\end{proof}

\subsection{Reduction to rapid small-jump decay}

The following lemma provides an analytic counterpart, in the present rough-kernel setting, of the decomposition used by Hou and Zhang  \cite[Section~1.3, equation~(1.20)]{Hou2026}.

\begin{lem}
  \label{lem:SJ}
  Let $\Sigma(t,s)=\Gamma_{k_{\le1},\ell}(t,s)$ be the small-jump with loss potential fundamental solution operator.
  For every integer $m\ge0$, every $0<r_1<\cdots<r_m<1$, and every $\eta_1,\ldots,\eta_m\in\R^d$, we define
  \begin{equation}
    \mathcal K_m^{\eta,r}:=
    \begin{cases}
      \Sigma(1,r_m)\Theta_{\eta_m}\Sigma(r_m,r_{m-1})\cdots
      \Theta_{\eta_1}\Sigma(r_1,0), & m\ge1, \\
      \Sigma(1,0),                  & m=0.
    \end{cases}
  \end{equation}
  Then $\cK_m^{\eta,r}$ is represented by a nonnegative measurable kernel $K^{\eta,r}_m(x,v,y,w)$ satisfying
  \begin{equation}
    \label{eq:SJ}
    K^{\eta,r}_m(x,v,y,w)
    \le C_N\phi_N\!\left(
    x-y-w-\sum_{j=1}^m(1-r_j)\eta_j,
    v-w-\sum_{j=1}^m\eta_j
    \right)
  \end{equation}
  for almost every $(x,v,y,w)$ and every $N>0$. The constant $C_N$ depends only on$N,d,\beta,\lambda_k,\lambda_k$ and is independent of $m,r_j,\eta_j$.
\end{lem}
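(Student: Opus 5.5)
The plan is to absorb every translation $\Theta_{\eta_j}$ into a Galilean change of variables. This turns $\cK_m^{\eta,r}$ into a single small-jump fundamental solution operator $\widetilde\Sigma(1,0)$, associated with a modified kernel and potential, composed with one fixed translation in $(x,v)$. The rapid decay of Proposition~\ref{prop:small-rapid}, in the version with potential from Proposition~\ref{prop:nonnegative-potential}, applies to $\widetilde\Sigma(1,0)$ with constants depending only on $N,d,\beta,\lambda_k,\Lambda_k$. Since the modified kernel satisfies the same hypotheses whatever $m$, $r_j$, $\eta_j$ are, the constant is uniform.

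\emph{Step 1 (Galilean covariance).} For $\eta\in\R^d$ and $a\in\R$ set $(G_{\eta}f)(t,x,v)=f(t,x-t\eta,v-\eta)$. Let $f$ be a weak solution of $(\partial_t+v\cdot\nabla_x)f+\cA_{k_{\le1},\ell}f=0$ on an interval $[a,b]$. Then $G_\eta f$ is a weak solution of the same equation with
\[
k^\eta(t,x,v,v')=k_{\le1}(t,x-t\eta,v-\eta,v'-\eta),\qquad \ell^\eta(t,x,v)=\ell(t,x-t\eta,v-\eta).
\]
I will check this directly on the weak formulation. The transport part is preserved because $(\partial_t+v\cdot\nabla_x)$ commutes with the change of variables $(t,x,v)\mapsto(t,x-t\eta,v-\eta)$, and the form part is preserved by the substitution $v\mapsto v-\eta$, $v'\mapsto v'-\eta$, $x\mapsto x-t\eta$. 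The new kernel $k^\eta$ is symmetric and supported in $|v-v'|\le1$, and it satisfies \eqref{eq:upperbound} and \eqref{eq:local-stable-lower} with the same constants; moreover $0\le\ell^\eta\le\ell_1$. By uniqueness (Theorem~\ref{thm:representation0T}), $\Gamma_{k^\eta,\ell^\eta}(t,s)=G_\eta(t)\,\Sigma(t,s)\,G_\eta(s)^{-1}$, where $G_\eta(t)$ is the map at fixed time $t$. At time $r$ we have $\Theta_\eta=\tau_{r\eta}\,G_\eta(r)$, where $\tau_{a}g(x,v)=g(x+a,v)$ is an $x$-translation. The operator $\tau_a$ is itself covariant: it maps the equation to one with $x$-translated coefficients, and these again satisfy the same assumptions.

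\emph{Step 2 (gluing).} I will move the factors $\tau_{r_j\eta_j}G_{\eta_j}(r_j)$ successively to the left through the later factors $\Sigma(r_{j+1},r_j)$, \dots, $\Sigma(1,r_m)$. Each passage replaces $\Sigma$ on that subinterval by the fundamental solution of transformed coefficients. After all the passages I obtain
\[
\cK_m^{\eta,r}=U\,\Sigma^{(m)}(1,r_m)\cdots\Sigma^{(0)}(r_1,0),
\]
where $U$ is a composition of $G$'s and translations evaluated at time $1$, hence a pure translation of $(x,v)$. Each $\Sigma^{(j)}$ is the fundamental solution on $[r_j,r_{j+1}]$ for coefficients $(k^{(j)},\ell^{(j)})$ with the same bounds. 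Define $(\tilde k,\tilde\ell)$ piecewise in time as $(k^{(j)},\ell^{(j)})$ on $(r_j,r_{j+1})$; these are measurable and satisfy all hypotheses. Restricting weak solutions of the glued problem to each subinterval and using uniqueness, followed by Chapman--Kolmogorov, shows that the product equals $\Gamma_{\tilde k,\tilde\ell}(1,0)$. Proposition~\ref{prop:nonnegative-potential} together with Proposition~\ref{prop:small-rapid} then gives a nonnegative kernel bounded by $C_N\phi_N(x-y-w,v-w)$. Conjugating with the translation $U$ shifts the arguments, which gives existence of the kernel $K_m^{\eta,r}$ and its nonnegativity.

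\emph{Step 3 (tracking the shift).} I will compute $U$ explicitly, or equivalently trace a free particle. The velocity shift accumulates to $\sum_j\eta_j$. A kick $\eta_j$ at time $r_j$ is transported freely over the remaining time $1-r_j$, which contributes $(1-r_j)\eta_j$ to the position. This yields the centre appearing in \eqref{eq:SJ}, and the sign conventions of $\Theta_\eta$ still have to be checked against it. The main obstacle is the bookkeeping in Steps 1--2: verifying the covariance on the level of weak solutions, including the time-dependent $x$-shift $t\eta$ and the correct composition of the $G$'s and $\tau$'s. I will handle this by checking the test-function identity once for a single $G_\eta$ and once for a single $\tau_a$, and then inducting on $m$.
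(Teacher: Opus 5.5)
Your proposal is correct and is essentially the paper's argument. On $(r_j,r_{j+1})$, the composition of your boosts $G_{\eta_i}(t)$ and shifts $\tau_{r_i\eta_i}$, $i\le j$, is exactly the inverse of the paper's substitution $\widetilde u(q,x,v)=u(q,x+\gamma_x(q),v+\gamma_v(q))$ along the piecewise free trajectory kicked by $\eta_i$ at $r_i$. Both arguments then glue the subintervals via uniqueness and Chapman--Kolmogorov, and conclude with Propositions~\ref{prop:small-rapid} and~\ref{prop:nonnegative-potential}. Your sign conventions also check out: $\Theta_\eta=\tau_{r\eta}G_\eta(r)$, and the final translation is $Ug(x,v)=g\bigl(x-\sum_j(1-r_j)\eta_j,\,v-\sum_j\eta_j\bigr)$, which gives the centre in \eqref{eq:SJ}.
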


\begin{proof}
  The case $m=0$ follows from Propositions~\ref{prop:small-rapid} and~\ref{prop:nonnegative-potential}. Let us now assume $m\ge1$ and fix $r_j$ and $\eta_j$ as in the statement. The key task is to remove the shifts by a change of variables.

  \emph{Step 1: The change of variables.}
  Put $r_0=0$ and $r_{m+1}=1$. For a function $b=b(r)$ with one-sided
  limits at $r_j$, we write
  \[
    b(r_j-)=\lim_{q\uparrow r_j}b(q),
    \qquad
    b(r_j+)=\lim_{q\downarrow r_j}b(q).
  \]

  Fix $\psi\in\L^2_{x,v}$. Set $u(q)=\Sigma(q,0)\psi$ for $0\le q<r_1$ and,
  iteratively for $j=1,\ldots,m$, we define
  \[
    u(r_j+)=\Theta_{\eta_j}u(r_j-),
    \qquad
    u(q)=\Sigma(q,r_j)u(r_j+),\quad r_j<q<r_{j+1}.
  \]
  The traces are well-defined by the $\L^2_{x,v}$ continuity. At the endpoint, we set
  $u(1)=\Sigma(1,r_m)u(r_m+)=\cK_m^{\eta,r}\psi$.

  We now introduce the shifted variables which can be thought of as kinetic trajectories with Dirac forcing at $r_1,\dots,r_m$; compare \cite{dietert2025criticaltrajectorieskineticgeometry}. We set
  \[
    \gamma_v(q)=\sum_{\substack{1\le i\le m\\r_i<q}}\eta_i,
    \qquad
    \gamma_x(q)=\int_0^q\gamma_v(a)\dd a
    =\sum_{\substack{1\le i\le m\\r_i<q}}(q-r_i)\eta_i,
    \quad 0\le q\le1.
  \]
  On $(r_j,r_{j+1})$, $\gamma_v$ is constant $\sum_{i=1}^j\eta_i$ and
  $ \dot{\gamma}_x=\gamma_v$. Moreover, $\gamma_x$ is continuous while $\gamma_v$ jumps at $r_j$ with
  \[
    \gamma_v(r_j+)-\gamma_v(r_j-)=\eta_j,
    \qquad j=1,\ldots,m.
  \]
  The trajectories start at $(\gamma_x(0),\gamma_v(0)) =(0,0)$ and end at
  \[
    \gamma_v(1)=\sum_{i=1}^m\eta_i,
    \qquad
    \gamma_x(1)=\sum_{i=1}^m(1-r_i)\eta_i.
  \]

  Away from the $r_j$, we define
  \[
    \widetilde u(q,x,v)
    =u(q,x+\gamma_x(q),v+\gamma_v(q)).
  \]
  We observe that taking limits and using the relation between $u(r_j+)$ and $u(r_j-)$
  \begin{align*}
    \widetilde u(r_j+,x,v)
     & =u(r_j+,x+\gamma_x(r_j),v+\gamma_v(r_j+))        \\
     & =u(r_j-,x+\gamma_x(r_j),v+\gamma_v(r_j+)-\eta_j) \\
     & =u(r_j-,x+\gamma_x(r_j),v+\gamma_v(r_j-))        \\
     & =\widetilde u(r_j-,x,v).
  \end{align*}
  Hence, we obtain that $\tilde{u}$ extends by continuity at $r_j$ so that  $\widetilde u\in\C([0,1];\L^2_{x,v})$ with $\widetilde u(0)=\psi$.

  \emph{Step 2: The equation in shifted variables.}
  We define
  \begin{align}
    \widetilde k(q,x,v,v')
     & :=k(q,x+\gamma_x(q),v+\gamma_v(q),v'+\gamma_v(q)),\notag \\
    \widetilde k_{\le1}(q,x,v,v')
     & :=\widetilde k(q,x,v,v')\mathds{1}_{\{|v-v'|\le1\}},
    \\
    \widetilde\ell(q,x,v)
     & :=\ell(q,x+\gamma_x(q),v+\gamma_v(q)).
  \end{align}
  In particular,
  \[
    \widetilde\ell(q,x,v)
    =2\int_{|h|>1}\widetilde k(q,x,v,v+h)\dd h.
  \]

  Let us next check the coefficient bounds. Both velocity arguments are shifted
  \[
    |(v+\gamma_v(q))-(v'+\gamma_v(q))|=|v-v'|.
  \]
  Symmetry is also preserved, and
  \[
    \lambda_k\frac{\mathds{1}_{\{|v-v'|\le1\}}}{|v-v'|^{d+2\beta}}
    \le\widetilde k_{\le1}(q,x,v,v')
    \le\Lambda_k\frac{\mathds{1}_{\{|v-v'|\le1\}}}{|v-v'|^{d+2\beta}}
  \]
  almost everywhere. Thus \eqref{eq:upperbound}, \eqref{eq:local-stable-lower}, and \eqref{eq:small-jump-support} hold
  with the original constants. By Lemma~\ref{lem:basic-properties}, $0\le\widetilde\ell\le\ell_1$.

  Next, we prove that $\widetilde u$ satisfies the transformed equation 
  \begin{equation} \label{eq:shifted-weak-pde}
    (\partial_q+v\cdot\nabla_x)\widetilde u+\cA_{\widetilde k_{\le1},\widetilde\ell}\ \widetilde u=0
\end{equation}
in the weak sense on $(r_j,r_{j+1})$.
  
  Fix $j\in\{0,\ldots,m\}$ and $\varphi\in\cD((r_j,r_{j+1})\times\R^{2d})$. In the unshifted variables, we use the test function
  \[
    \varphi_j(q,z,\zeta)
    =\varphi(q,z-\gamma_x(q),\zeta-\gamma_v(q)).
  \]
  This is smooth on the interval $(r_j,r_{j+1})$, since $\gamma_v$ is constant and $\gamma_x$ is affine. Using $ \dot{\gamma}_x=\gamma_v$, we obtain
  \begin{align*}
     & \left[(\partial_q+\zeta\cdot\nabla_z)\varphi_j\right]
    (q,x+\gamma_x(q),v+\gamma_v(q))                         \\
     & \qquad=\partial_q\varphi(q,x,v)
    -\gamma_v(q)\cdot\nabla_x\varphi(q,x,v)
    +(v+\gamma_v(q))\cdot\nabla_x\varphi(q,x,v)             \\
     & \qquad=(\partial_q+v\cdot\nabla_x)\varphi(q,x,v).
  \end{align*}

  In the form \eqref{eq:kernel-potential-form}, make the changes of variables
  \[
    z=x+\gamma_x(q),\qquad
    \zeta=v+\gamma_v(q),\qquad
    \zeta'=v'+\gamma_v(q),
  \]
  then
  \[
    \int_{\R^d}a^{\widetilde k_{\le1},\widetilde\ell}_{q,x}
    (\widetilde u,\varphi)\dd x
    =\int_{\R^d}a^{k_{\le1},\ell}_{q,z}(u,\varphi_j)\dd z.
  \]
  We conclude \eqref{eq:shifted-weak-pde}

  \emph{Step 3: The global weak solution.}
  Let us now glue the solutions on each subinterval to a solution on $(0,1)$. Since there are finitely many intervals and on each interval $\widetilde u$ belongs to the energy space, we have $\widetilde u\in\L^2((0,1);\L^2_x\H^\beta_v)$. By Step~1, we also have $\widetilde u\in\C([0,1];\L^2_{x,v})$.
  By uniqueness for the equation on each interval $[r_j, r_{j+1}]$, we have 
  \[
    \widetilde u(q)=\Gamma_{\widetilde k_{\le1},\widetilde\ell}\,(q,r_j)\,\psi,
    \qquad r_j\le q\le r_{j+1}.
  \] 
  If $q\in [0,1]$ and $r_j\le q\le r_{j+1}$, we thus obtain 
    \[  
        \widetilde u(q)=\Gamma_{\widetilde k_{\le1},\widetilde\ell}\,(q,r_j) \ldots \Gamma_{\widetilde k_{\le1},\widetilde\ell}\,(r_1,r_0)\,\psi
        = \Gamma_{\widetilde k_{\le1},\widetilde\ell}\,(q,0)\,\psi
    \]
    by the Chapman-Kolmogorov relation.

  \emph{Step 4: The kernel estimate.}
  We have
  \[
    (\cK_m^{\eta,r}\psi)(x,v)
    =\left[\Gamma_{\widetilde k_{\le1},\widetilde\ell}\, (1,0)\psi\right]
    (x-\gamma_x(1),v-\gamma_v(1)).
  \]
  Proposition~\ref{prop:nonnegative-potential} gives us a nonnegative measurable kernel for the operator on the right. Hence a kernel for $\cK_m^{\eta,r}$ is given by
  \[
    K^{\eta,r}_m(x,v,y,w)
    =\Gamma_{\widetilde k_{\le1},\widetilde\ell}
    \, (1,x-\gamma_x(1),v-\gamma_v(1),0,y,w).
  \]

  By Proposition~\ref{prop:nonnegative-potential} and Proposition~\ref{prop:small-rapid} we obtain
  \begin{align*}
    K^{\eta,r}_m(x,v,y,w)
     & \le\Gamma_{\widetilde k_{\le1},0}
    (1,x-\gamma_x(1),v-\gamma_v(1),0,y,w) \\
     & \le C_N\left(1+|x-\gamma_x(1)-y-w|
    +|v-\gamma_v(1)-w|\right)^{-N}
  \end{align*}
  for almost every $(x,v,y,w)$. Substituting the formulas for  $\gamma_x(1),\gamma_v(1)$ gives \eqref{eq:SJ}.
\end{proof}

\begin{prop}
  For every $N>2d$, the kernel of $\Gamma(1,0)$ satisfies
  \begin{equation}
    \label{eq:normalised-bound}
    \Gamma(1,x,v,0,y,w)
    \le C_N\Psi_{N,2\Lambda_k}(x-y-w,v-w)
  \end{equation}
  for almost every $(x,v,y,w)$. The constant $C_N$ depends only on
  $N,d,\beta,\lambda_k,\Lambda_k$.
\end{prop}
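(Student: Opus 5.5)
We combine the Duhamel expansion of Lemma~\ref{lem:Duhamel} with the kernel bounds of Lemma~\ref{lem:SJ}, term by term. Fix $N>2d$ and a nonnegative $\psi\in\L^1_{x,v}\cap\L^2_{x,v}$. By Lemma~\ref{lem:Duhamel}, $\Gamma(1,0)\psi=\sum_m T_m\psi$ in $\L^2$. Every factor in $T_m$ is a positive operator: $\Sigma$ by Proposition~\ref{prop:nonnegative-potential}, and $J_r$ since $k\ge0$. We may therefore apply the domination \eqref{eq:J-domination} to each $J_{r_j}$ in turn, from the innermost factor outward. This gives, pointwise almost everywhere,
\[
T_m\psi\le (2\Lambda_k)^m\int_{0<r_1<\cdots<r_m<1}\int_{\{|\eta_j|>1\}}\cK_m^{\eta,r}\psi\ \prod_{j=1}^m|\eta_j|^{-d-2\beta}\dd\eta\dd r .
\]
To justify this, I would first use positivity of $\Sigma(r_{j+1},r_j)$ to push the inequality through each factor. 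I would then interchange the Bochner integral in $\eta$ with the operators $\Sigma$; this is legitimate because $\eta\mapsto\Theta_\eta g$ is strongly continuous and bounded in $\L^2$, and $|\eta|^{-d-2\beta}\mathds 1_{|\eta|>1}$ is integrable.

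Next, insert the kernel bound \eqref{eq:SJ} of Lemma~\ref{lem:SJ}, whose constant $C_N$ is uniform in $m$, $r$ and $\eta$. After testing against a nonnegative $\tpsi$ and applying Tonelli, this bounds the kernel of $T_m$ by
\[
C_N(2\Lambda_k)^m\int_{0<r_1<\cdots<r_m<1}\int \phi_N\Big(X-\sum_j(1-r_j)\eta_j,\,V-\sum_j\eta_j\Big)\prod_j|\eta_j|^{-d-2\beta}\mathds 1_{|\eta_j|>1}\dd\eta\dd r,
\]
where $X=x-y-w$ and $V=v-w$.

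The key step is to recognise this expression as $\frac{1}{m!}\,\phi_N*\nu^{*m}(X,V)$. Substitute $\theta_j=1-r_j$. Because the integrand in $\eta$ is symmetric under permuting the pairs $(r_j,\eta_j)$, the integral over the ordered simplex equals $\frac1{m!}$ times the integral over $[0,1]^m$. The latter is exactly $\int\phi_N\big((X,V)-\sum_j(\theta_j\eta_j,\eta_j)\big)\prod_j\dd\nu$ in the sense of \eqref{eq:def-measurenu}, i.e.\ $\phi_N*\nu^{*m}(X,V)$. Summing over $m$ then yields $C_N\Psi_{N,2\Lambda_k}(X,V)$, and this bound is finite and integrable by \eqref{eq:Psi-L1}.

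It remains to pass from the operator series to a kernel bound. The partial sums of $\sum_m T_m\psi$ are dominated by $C_N\int\Psi_{N,2\Lambda_k}(x-y-w,v-w)\psi(y,w)\dd(y,w)$. Since the series converges in $\L^2$, a subsequence converges almost everywhere, so $\Gamma(1,0)\psi$ obeys the same bound almost everywhere. Using the kernel representation \eqref{eq:kernel-representation-L1} for $\Gamma(1,0)$ (Theorem~\ref{thm:nash-kernel}) with $\psi$ ranging over a countable family of indicator functions of small sets, Lebesgue differentiation gives the pointwise kernel inequality \eqref{eq:normalised-bound} almost everywhere.

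I expect the main obstacle to be the rigorous justification of the domination step, namely moving the $\eta$-integral from \eqref{eq:J-domination} through the positive operators $\Sigma$ and then using the kernel of $\cK_m^{\eta,r}$ measurably in $(\eta,r)$. The measurability issue arises because Lemma~\ref{lem:SJ} gives kernels only for each fixed $(\eta,r)$. I would avoid that issue by working only at the level of pairings $\angle{\cdot}{\tpsi}$ with nonnegative test functions $\psi,\tpsi$. These pairings are continuous in $(\eta,r)$, and at that level only the almost-everywhere kernel bound \eqref{eq:SJ} for each fixed parameter is needed.
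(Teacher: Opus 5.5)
Your proposal is correct and follows the paper's proof essentially step by step. Both arguments use the Duhamel expansion, dominate each $J_{r_j}$ by the shifts $\Theta_{\eta_j}$ via positivity, apply the uniform bound of Lemma~\ref{lem:SJ}, and then substitute $\theta_j=1-r_j$ and symmetrise the ordered simplex to obtain $\frac{1}{m!}\phi_N*\nu^{*m}$ before summing in $m$. Working with pairings $\angle{\cdot}{\tpsi}$ and recovering the pointwise kernel inequality by Lebesgue differentiation only at the end is a reasonable way to make rigorous what the paper leaves tacit, namely the existence of kernels for the $T_m$ and their measurable dependence on $(\eta,r)$.
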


\begin{proof}
  We want to apply Lemma~\ref{lem:SJ} to each term in the Duhamel expansion. Fix $N>2d$. We denote by $T_m(x,v,y,w)$  the nonnegative kernels of the operators $T_m$ in Lemma~\ref{lem:Duhamel}. By the Duhamel expansion and positivity,
  \[
    \Gamma(1,x,v,0,y,w)
    =\sum_{m=0}^{\infty}T_m(x,v,y,w).
  \]

  Using \eqref{eq:J-domination} in each factor, Lemma~\ref{lem:SJ}, and positivity of $J_{r_i}$ and $\Sigma(r_i,r_{i+1})$, gives us
  \begin{align*}
    T_m(x,v,y,w)
     & \le (2\Lambda_k)^m
    \int_{0<r_1<\cdots<r_m<1}
    \int_{\{|\eta_1|>1,\ldots,|\eta_m|>1\}}       \\
     & \qquad
    K^{\eta,r}_m(x,v,y,w)
    \prod_{j=1}^m|\eta_j|^{-d-2\beta}\dd\eta\dd r \\
     & \le C_N(2\Lambda_k)^m
    \int_{0<r_1<\cdots<r_m<1}
    \int_{\{|\eta_1|>1,\ldots,|\eta_m|>1\}}       \\
     & \qquad
    \phi_N\!\left(
    (x-y-w,v-w)-\sum_{j=1}^m((1-r_j)\eta_j,\eta_j)
    \right)
    \prod_{j=1}^m|\eta_j|^{-d-2\beta}\dd\eta\dd r.
  \end{align*}
  The constant $C_N$ is independent of $m,r,\eta$.

  We now set $\theta_j=1-r_j$, $j=1,\ldots,m$. The integral domain transforms to
  \[
    0<\theta_m<\cdots<\theta_1<1.
  \]
  The integral in the last line is invariant under permutations of the  pairs $(\theta_j,\eta_j)$. As there are $m!$ possibilities, all with equal value, we deduce
  \begin{equation}
    \begin{aligned}
      T_m(x,v,y,w)
       & \le C_N\frac{(2\Lambda_k)^m}{m!}
      \int_{(0,1)^m}
      \int_{\{|\eta_1|>1,\ldots,|\eta_m|>1\}}           \\
       & \qquad\qquad
      \phi_N\!\left(
      (x-y-w,v-w)-\sum_{j=1}^m(\theta_j\eta_j,\eta_j)
      \right)
      \prod_{j=1}^m|\eta_j|^{-d-2\beta}\dd\eta\dd\theta \\
       & =C_N\frac{(2\Lambda_k)^m}{m!}
      (\phi_N*\nu^{*m})(x-y-w,v-w).
    \end{aligned}
  \end{equation}
  Here we used that, integration in each pair $(\theta_j,\eta_j)$ is the same as integration with respect to $\nu$ in $(\theta_j\eta_j,\eta_j)$.

  The same bound holds for $m=0$, by Lemma~\ref{lem:SJ},
  since $T_0(x,v,y,w)=\Sigma(1,x,v,0,y,w)$ and
  $\nu^{*0}=\delta_0$. Summing over $m\ge0$ and using
  \eqref{eq:Psi}, we conclude that
  \begin{align*}
    \Gamma(1,x,v,0,y,w)
     & \le C_N\sum_{m=0}^{\infty}
    \frac{(2\Lambda_k)^m}{m!}
    (\phi_N*\nu^{*m})(x-y-w,v-w)           \\
     & =C_N\Psi_{N,2\Lambda_k}(x-y-w,v-w),
  \end{align*}
  which proves \eqref{eq:normalised-bound}.
\end{proof}

\section{Comparison with the kinetic profile of order $2\beta$}

In this section, we compare the profile $\Psi_{N,A}$ defined in \eqref{eq:Psi} with the kinetic profile of order $2\beta$
\begin{equation*}
  \HZK_{d+2\beta}(x,v) = (1+|x|+|v|)^{-d-2\beta}  \int_0^1(1+|x-rv|)^{-d-2\beta}\dd r.
\end{equation*}

\begin{prop} \label{prop:cmp-Psi-HZ}
  Let $N\ge2d+4\beta$ and $A>0$. Then, for every  $(x,v) \in  \R^{2d}$,
  \begin{equation}
    \label{eq:cmp-Psi-HZ}
    \Psi_{N,A}(x,v)\le C e^{CA}\HZK_{d+2\beta}(x,v),
  \end{equation}
  where $C = C(d,\beta)>0$.
\end{prop}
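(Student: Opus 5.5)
The plan is to prove a \emph{convolution stability} estimate for the profile and then sum the exponential series. Write $K:=\HZK_{d+2\beta}$. The key claim is that there is $C_0=C_0(d,\beta)$ such that
\begin{equation*}
  \phi_N\le C_0 K \quad\text{and}\quad K*\nu\le C_0 K \quad\text{pointwise on }\R^{2d}.
\end{equation*}
Granting this, induction gives $\phi_N*\nu^{*m}\le C_0^{m+1}K$, since convolution with the positive measure $\nu$ preserves pointwise inequalities. Summing \eqref{eq:Psi} then yields $\Psi_{N,A}\le C_0\sum_m (AC_0)^m/m!\,K=C_0e^{C_0A}K$, which is \eqref{eq:cmp-Psi-HZ}.

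The first inequality is elementary. We have $K(x,v)\ge(1+|x|+|v|)^{-d-2\beta}\int_0^1(1+|x|+|v|)^{-d-2\beta}\dd r=(1+|x|+|v|)^{-2d-4\beta}$, because $|x-rv|\le|x|+|v|$. Hence $\phi_N\le K$ as soon as $N\ge 2d+4\beta$, which is exactly the hypothesis.

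The main obstacle is the second inequality, $K*\nu\le C_0K$. By definition,
\begin{equation*}
  (K*\nu)(x,v)=\int_0^1\int_{|\eta|>1}K(x-\theta\eta,v-\eta)\,|\eta|^{-d-2\beta}\dd\eta\dd\theta.
\end{equation*}
Two structural facts should help. First, $\nu$ is itself of kinetic-profile type: the pushforward of $\dd\theta\,|\eta|^{-d-2\beta}\dd\eta$ under $(\theta,\eta)\mapsto(\theta\eta,\eta)$ is the mass sitting on segments $\{(\theta\eta,\eta)\}$, which is precisely what the integral $\int_0^1(1+|x-rv|)^{-d-2\beta}\dd r$ in $K$ encodes. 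Second, it is convenient to write $K$ as an average, $K(x,v)\approx\int_0^1 F_r(x,v)\dd r$ with $F_r(x,v)=(1+|x|+|v|)^{-d-2\beta}(1+|x-rv|)^{-d-2\beta}$. One then wants $F_r*\nu\lesssim K$ uniformly in $r$. Equivalently, one wants to check that $K$ is comparable to $\mu*\phi_M$ for a suitable finite measure $\mu$ built from segments, and that such objects are stable under convolution with $\nu$.

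For the direct estimate I would split the $(\theta,\eta)$ integration into the region $|\eta|\le\frac12(1+|x|+|v|)$ and its complement. On the first region, $1+|x-\theta\eta|+|v-\eta|\gtrsim 1+|x|+|v|$, at least after further separating the subcases $|v|\gtrsim|x|$ and $|x|\gg|v|$. The first factor of $K$ is therefore preserved. For the $r$-integral one uses the translation $x-\theta\eta-r(v-\eta)=(x-rv)-(\theta-r)\eta$ and averages over $\theta$: in the relevant range this average is controlled by $(1+|x-rv|)^{-d-2\beta}$ up to the integrable weight $|\eta|^{-d-2\beta}$. On the complement $|\eta|\gtrsim1+|x|+|v|$, one bounds $K(x-\theta\eta,v-\eta)$ and integrates over $v-\eta$ and over $x-\theta\eta$ (the latter via the $\theta$-integral, which provides $\dd\theta\approx\dd(x-\theta\eta)/|\eta|$ in the direction of $\eta$). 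This should produce $(1+|x|+|v|)^{-d-2\beta}$ times the probability that the segment $\{\theta\eta\}$ passes near $x$. That is comparable to $\int_0^1(1+|x-rv|)^{-d-2\beta}\dd r$ when $\eta\approx v$, and otherwise smaller.

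I expect the delicate part to be the bookkeeping of the segment geometry in the regime $|x|\gg|v|$ versus $|x|\lesssim|v|$, where the second factor of $K$ behaves like $(1+|x|)^{-d-2\beta}$ or like $|v|^{-1}(1+\operatorname{dist}(x,[0,v]))^{-d-2\beta+1}$ respectively. If the direct route becomes too messy, my fallback is to first establish the two-sided equivalence
\begin{equation*}
  K(x,v)\approx(1+|x|+|v|)^{-d-2\beta}\,\frac{1}{1+|v|}\,\bigl(1+\operatorname{dist}(x,[0,v])\bigr)^{-d-2\beta+1},
\end{equation*}
and to verify $K*\nu\lesssim K$ case by case using this explicit form.
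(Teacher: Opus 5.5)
\textbf{The reduction is right, but the key estimate is not proved.} Your outer reduction is exactly the paper's: $\phi_N\le K$ from $K(x,v)\ge(1+|x|+|v|)^{-2d-4\beta}$, the one-jump bound $K*\nu\le C_0K$ (with $K=\HZK_{d+2\beta}$), induction, and the exponential sum. The gap is that the one-jump bound, which carries all the content, is only sketched heuristically, and the sketch has concrete defects.

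(i) With the cut $|\eta|\le\frac12\rho$, $\rho=1+|x|+|v|$, the claim $1+|x-\theta\eta|+|v-\eta|\gtrsim\rho$ fails on a set of positive measure. Take $|v|$ large, $x=\theta_0v$ with $1-\theta_0<1/|v|$, and $(\theta,\eta)$ near $(\theta_0,v)$. No case distinction in $(x,v)$ repairs this. The cut $|\eta|\le\rho/4$ gives $1+|x-\theta\eta|+|v-\eta|\ge\rho-2|\eta|\ge\rho/2$ at once.

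(ii) More seriously, for $|\eta|\gtrsim\rho$ you do not say where the decay transversal to the segment $[0,v]$ comes from. Suppose you integrate only the first factor $(1+|x-\theta\eta|+|v-\eta|)^{-d-2\beta}$ of $K(x-\theta\eta,v-\eta)$ over $v-\eta$ and along $\eta$. The resulting bound is only $\rho^{-d-2\beta}\int_0^1(1+|x-\theta v|)^{-2\beta}\dd\theta$, and this is sharp for that integral. For $x\perp v$ with $|x|\approx|v|\approx\rho/2$ the contribution really is of order $\rho^{-d-4\beta}$, a factor $\rho^{d}$ above $K(x,v)\approx\rho^{-2d-4\beta}$.

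(iii) Your fallback equivalence is false when $|x|\gg1+|v|$. At $v=0$ its right-hand side is $(1+|x|)^{-2d-4\beta+1}$, whereas $K(x,0)=(1+|x|)^{-2d-4\beta}$. The factor $1/(1+|v|)$ should be $1/(1+|v|+\operatorname{dist}(x,[0,v]))$. Bounding $K*\nu$ by a quantity larger than $K$ would not give $K*\nu\lesssim K$.

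\textbf{The missing ingredient} is the uniform scaled convolution bound
\[
\int_{\R^d}h(\eta)\,h(u-\sigma\eta)\dd\eta\le C\,h(u),\qquad h(u)=(1+|u|)^{-d-2\beta},\quad |\sigma|\le1.
\]
It is proved by splitting at $|\sigma\eta|=|u|/2$. On the far part $h(\eta)\lesssim|\sigma|^{d+2\beta}h(u)$, while the change of variables costs $|\sigma|^{-d}$, leaving $|\sigma|^{2\beta}\le1$.

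The paper combines this with the pointwise bound $|\eta|^{-d-2\beta}(1+|x-\theta\eta|+|v-\eta|)^{-d-2\beta}\le C\rho^{-d-2\beta}\bigl(h(\eta)+h(v-\eta)\bigr)$ for $|\eta|>1$, obtained by splitting at $|\eta|=\rho/4$. Crucially, it keeps the second factor $\int_0^1h\bigl((x-rv)-(\theta-r)\eta\bigr)\dd r$ of $K$ at the shifted point.

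The $h(\eta)$ term then gives $h(x-rv)$ with $\sigma=\theta-r$, pointwise in $\theta$; no averaging is needed. For the $h(v-\eta)$ term, the substitution $\zeta=v-\eta$ turns the argument into $x-\theta v+(\theta-r)\zeta$, so the same bound gives $h(x-\theta v)$. The segment integral of the output is thus produced by the $\theta$-variable of $\nu$, not by the $r$ of $K$. This is the rigorous form of your heuristic that $\eta\approx v$ dominates. Integrating in $(r,\theta)$ then yields $K*\nu\lesssim K$ directly, with no case analysis of the segment geometry.
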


Assuming the proposition is true, we can now conclude. 
\begin{proof}[Proof of Theorem~\ref{thm:main}.]
Combining \eqref{eq:cmp-Psi-HZ} and \eqref{eq:normalised-bound}, together with positivity proved in Theorem~\ref{thm:nash-kernel}, we obtain \eqref{eq:main} for $t=1,s=0$.

Let us now explain the rescaling argument to deduce it for general $s<t$. We set $\tau=t-s$. For fixed $(s,y,w)$, we introduce the new variables 
\begin{equation*}
  r=\frac{\sigma-s}{\tau},\qquad
  X=\frac{x-y-(\sigma-s)w}{\tau^{1+1/(2\beta)}},
  \qquad
  V=\frac{v-w}{\tau^{1/(2\beta)}}.
\end{equation*}
If $U(r,X,V)=u(s+\tau r,y+\tau r w+\tau^{1+1/(2\beta)} X,w+\tau^{1/(2\beta)} V)$, then
\[
  (\partial_r+V\cdot\nabla_X)U
  =\tau(\partial_t+v\cdot\nabla_x)u.
\]
The rescaled kernel is
\begin{equation*}
  \widetilde k(r,X,V,V')
  = \tau^{1+d/(2\beta)}
  k(s+\tau r,y+\tau r w+\tau^{1+1/(2\beta)} X,w+\tau^{1/(2\beta)} V,w+\tau^{1/(2\beta)} V').
\end{equation*}
Then, \eqref{eq:stable-comparable} gives
\begin{equation*}
  \lambda_k |V-V'|^{-d-2\beta}
  \le \widetilde k(r,X,V,V')
  \le \Lambda_k |V-V'|^{-d-2\beta}.
\end{equation*}
The Jacobian of $(X,V)\mapsto(x,v)$ is $\tau^{d+d/\beta}$.

Note that  if  \eqref{eq:stable-comparable} holds for $t\in (0,\infty)$, then the constant $C$ does not depend on $T$ and the estimate is valid for $0\le s<t<\infty$.
\end{proof}

The key ingredient in the proof of Proposition~\ref{prop:cmp-Psi-HZ} is the following lemma. 

\begin{lem} \label{lem:cmp-onejump}
  For every $(x,v)\in\R^{2d}$, we have
  \begin{equation}
    \label{eq:cmp-onejump}
    (\HZK_{d+2\beta}*\nu)(x,v) \le C\HZK_{d+2\beta}(x,v),
  \end{equation}
  where $C=C(d,\beta)>0$.
\end{lem}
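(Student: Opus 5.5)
The plan is to write $\alpha=d+2\beta$ and $\rho(x,v)=1+|x|+|v|$, and to use the representation of $\nu$ from \eqref{eq:def-measurenu}:
\[
(\HZK_{d+2\beta}*\nu)(x,v)=\int_0^1\int_{|\eta|>1}\HZK_{d+2\beta}(x-\theta\eta,v-\eta)\,|\eta|^{-\alpha}\dd\eta\dd\theta .
\]
Set $(x',v')=(x-\theta\eta,v-\eta)$ and write $\HZK_{d+2\beta}(x,v)=\rho(x,v)^{-\alpha}I(x,v)$ with $I(x,v)=\int_0^1(1+|x-sv|)^{-\alpha}\dd s$. Heuristically, $\HZK_{d+2\beta}$ is the density of a kinetic path that has made at most one large jump, and convolving with $\nu$ adds one more jump at time $1-\theta$. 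The estimate should therefore say that two jumps cost no more than one. I would split the $(\theta,\eta)$-integral into two regions, $\rho(x',v')\ge \rho(x,v)/4$ and $\rho(x',v')<\rho(x,v)/4$.

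\emph{Region 1: $\rho(x',v')\ge\rho(x,v)/4$.} Here the prefactor $\rho(x',v')^{-\alpha}$ is comparable to $\rho(x,v)^{-\alpha}$. It remains to show
\[
\int_0^1\int_{|\eta|>1}I(x-\theta\eta,v-\eta)|\eta|^{-\alpha}\dd\eta\dd\theta\lesssim I(x,v).
\]
Note that $(x-\theta\eta)-s(v-\eta)=(x-sv)+(s-\theta)\eta$. For fixed $s$, I would integrate in $(\theta,\eta)$ and split according to $|(s-\theta)\eta|\le\frac12(1+|x-sv|)$ or not. In the first sub-case the integrand is comparable to $(1+|x-sv|)^{-\alpha}$, and the $\eta$-integral of $|\eta|^{-\alpha}$ over $|\eta|>1$ is finite. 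In the second sub-case I would drop $(1+|x-sv|)^{-\alpha}$ and instead integrate $(1+|(s-\theta)\eta+(x-sv)|)^{-\alpha}$ in $\eta$. Substituting $\eta\mapsto(s-\theta)\eta$ gains a factor $|s-\theta|^{-d}$, while the restriction $|\eta|\gtrsim(1+|x-sv|)/|s-\theta|$ gains $\big(|s-\theta|/(1+|x-sv|)\big)^{\alpha}$ from $|\eta|^{-\alpha}$. Since $\alpha>d$, the result is integrable in $\theta$ and bounded by $C(1+|x-sv|)^{-\alpha}$. Integrating in $s$ then gives the bound $C\,I(x,v)$.

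\emph{Region 2: $\rho(x',v')<\rho(x,v)/4$.} Since $|x|+|v|\le|x'|+|v'|+2|\eta|$, this forces $|\eta|\gtrsim\rho(x,v)$, so $|\eta|^{-\alpha}\lesssim\rho(x,v)^{-\alpha}$. This accounts for the prefactor of $\HZK_{d+2\beta}(x,v)$. The task is then to show that the remaining integral
\[
\int_0^1\int\HZK_{d+2\beta}(x',v')\dd\eta\dd\theta
\]
(taken over this region, with the extra factor $(\rho(x,v)/|\eta|)^{\alpha}\le C$) is bounded by $C\,I(x,v)$. The key geometric observation is
\[
x-\theta v=x'-\theta v',
\]
which is small compared with $\rho(x,v)$. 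So the original point $(x,v)$ lies near the transport segment at time $\theta$, and $I(x,v)\gtrsim\int_0^1(1+|x'-\theta v'|+|s-\theta||v|)^{-\alpha}\dd s$.

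To carry this out, I would change variables $(\theta,\eta)\mapsto(x',v')$, i.e.\ $\eta=v-v'$, with $\theta$ determined up to the transverse component. The $\theta$-direction is then paired with the $s$-integral in $I(x,v)$. Concretely, I would write $I(x,v)\approx |v|^{-1}(1+\operatorname{dist}(x,[0,v]))^{-(\alpha-1)}$ when $|v|$ is large (and $I\approx(1+|x|)^{-\alpha}$ otherwise). I would then check that integrating $\HZK_{d+2\beta}(x',v')$ over $(x',v')$, with $\theta$ constrained by $|x-\theta v-(x'-\theta v')|$ small, produces exactly this factor. The check should use that $\HZK_{d+2\beta}\in \L^1(\R^{2d})$ and that $\int\HZK_{d+2\beta}(x',v')(1+|x'-\theta v'|)^{\alpha}\dd(x',v')$ is controlled on the relevant set.

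\emph{Main obstacle.} Region 2 is where I expect the main difficulty. The naive bound $\rho^{-\alpha}\|\HZK_{d+2\beta}\|_1\rho^{-d}$ fails, since $\HZK_{d+2\beta}$ can be as small as $\rho^{-2\alpha}$ away from the segment $\{x=sv\}$. One must therefore exploit the identity $x-\theta v=x'-\theta v'$ to show that Region 2 only contributes where $I(x,v)$ is correspondingly large. I would first treat $|v|\lesssim1$ and $|v|\gg1$ separately. In the latter case, I would decompose $x$ into its components along $v$ and orthogonal to $v$.
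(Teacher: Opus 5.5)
Your Region~1 is fine. Once the prefactors are compared, what remains is the scaled convolution bound
\[
\int_{\R^d}h(\eta)\,h(u-\sigma\eta)\dd\eta\le C\,h(u),\qquad |\sigma|\le1,\quad h(u)=(1+|u|)^{-d-2\beta},
\]
applied with $u=x-sv$ and $\sigma=\theta-s$. Your two-case argument for it is the same as the first step of the paper's proof.

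The gap is Region~2. You only sketch it, and the sketch does not lead to a proof. For $d\ge2$ the map $(\theta,\eta)\mapsto(x',v')$ goes from a $(d+1)$-dimensional set into $\R^{2d}$, so it is not a change of variables. Integrability of $\HZK_{d+2\beta}$ over $\R^{2d}$ therefore gives no control of its integral over that lower-dimensional image. The asymptotics of $I(x,v)$ and the decomposition of $x$ along and orthogonal to $v$ are also not needed.

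The missing idea is the following. In Region~2 you have spent $|\eta|^{-\alpha}\lesssim\rho(x,v)^{-\alpha}$ on the outer prefactor. So take the integrable weight in $\eta$ from the prefactor of the shifted profile instead:
\[
\rho(x',v')^{-\alpha}\le(1+|v-\eta|)^{-\alpha}=h(v-\eta).
\]
Your identity $x-\theta v=x'-\theta v'$ should then be used in the form
\[
x'-sv'=(x-\theta v)+(\theta-s)v'.
\]
The substitution $\zeta=v-\eta$ and the same convolution bound, now with $\sigma=s-\theta$, give
\[
\int_{\R^d}h(v-\eta)\,h(x'-sv')\dd\eta=\int_{\R^d}h(\zeta)\,h\bigl((x-\theta v)+(\theta-s)\zeta\bigr)\dd\zeta\le C\,h(x-\theta v).
\]
Integrating in $s,\theta\in(0,1)$ yields $C\,I(x,v)$. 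Hence Region~2 contributes at most $C\rho(x,v)^{-\alpha}I(x,v)$, with the jump time $\theta$ playing the role of the transport parameter in $I(x,v)$.

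This is exactly how the paper treats the range $|\eta|>\rho/4$. The paper splits by $|\eta|$ rather than by $\rho(x',v')$, which makes no difference. With this step added your proof is complete. As written, Region~2 is unproved, and the difficulty you anticipate there does not actually arise.
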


This is essentially proven already in \cite{Hou2026}. It can be deduced from the estimates in \cite[Lemma~5]{Hou2026} and the compound Poisson
expansion in Step~1 of its proof, in particular \cite[equation~(26)]{Hou2026}, with $\alpha=2\beta$.
We provide a purely analytical proof for the convenience of the reader. 

\begin{proof}
      We start by introducing $h(u)=(1+|u|)^{-d-2\beta}$ for $u \in \R^d$. The first task is to prove that
  \begin{equation} \label{eq:cmp-scaled-convolution}
    \int_{\R^d} h(\eta) h(u-\sigma \eta) \dd\eta \le Ch(u),
    \qquad u \in \R^d, \quad \sigma  \in [-1,1].
  \end{equation}
  Since $h \in \L^1(\R^d)$ and $0< h \le1$, the cases $\sigma =0$ and $|u| \le 2$  are immediate. 
  Let us suppose now that $\sigma  \neq 0$ and $|u|>2$. When $|\sigma \eta| \le |u|/2$, we have $h(u-\sigma \eta) \le Ch(u)$, so the integral is  bounded by $C \norm{h}_1 h(u)$ in this regime. On the other hand,
  \begin{equation*}
      h(\eta)\le |\sigma |^{d+2\beta}h(u).
\end{equation*}
  Changing variables to $\zeta=u-\sigma \eta$, we obtain the estimate
  \begin{align*}
    \int_{|\sigma \eta| > |u|/2} h(\eta) h(u-\sigma \eta) \dd\eta
      &\le C |\sigma |^{d+2\beta}h(u)  \int_{\R^d} h(u-\sigma \eta) \dd \eta.   \\
      &=C|\sigma |^{2\beta}   \norm{h}_1h(u) \le C\norm{h}_1h(u).
  \end{align*}
  Together this proves \eqref{eq:cmp-scaled-convolution}.

    Let us now fix $(x,v)$ and set $\rho=1+|x|+|v|$. For $0\le \theta \le 1$  and $|\eta|>1$, we claim that
  \begin{equation}
    \frac{1}{|\eta|^{d+2\beta} (1+|x-\theta\eta|+|v-\eta|)^{d+2\beta}}
      \le\frac{C}{\rho^{d+2\beta}} (h(\eta)+h(v-\eta)).
         \label{eq:cmp-weight-split}
  \end{equation}
  First, if $|\eta|\le\rho/4$, then
  \begin{equation*}
      1+|x-\theta\eta|+ |v-\eta|   \ge \rho-2|\eta|\ge\rho/2, \quad   |\eta|^{-d-2\beta}\le2^{d+2\beta}h(\eta).
\end{equation*}
  Moreover, if $|\eta|>\rho/4$, then
  \begin{equation*}
      |\eta|^{-d-2\beta} \le 4^{d+2\beta} \rho^{-2d-2\beta},
    \quad
    (1+|x-\theta \eta|+|v-\eta |)^{-d-2\beta} \le h(v-\eta).
\end{equation*}

  By the definition of $\nu$ in \eqref{eq:def-measurenu} we have
  \begin{equation*}
      (\HZK_{d+2\beta}*\nu)(x,v)
      =\int_0^1\int_{| \eta |>1} \HZK_{d+2\beta}(x-\theta \eta ,v-\eta)  \frac{\dd \eta \dd \theta}{|\eta |^{d+2\beta}}.
\end{equation*}
  Expanding the profile, applying \eqref{eq:cmp-weight-split}, and using Tonelli's theorem, we obtain
  \begin{align}
    \label{eq:cmp-two-convolutions}
    (\HZK_{d+2\beta}*\nu)(x,v)
      &\le\frac{C}{\rho^{d+2\beta}}
        \int_0^1\int_0^1 \int_{\R^d} \left(h(\eta)+h(v-\eta)\right)\\ \nonumber
         &\quad\quad \quad\quad\quad\quad\cdot h\left(x-rv-(\theta-r)\eta\right)
        \dd\eta \dd r\dd\theta.
  \end{align}
            The first term can be bounded by \eqref{eq:cmp-scaled-convolution} as
            \begin{equation*}
                 \int_{\R^d}h(\eta) h(x-rv-(\theta-r)\eta)\dd\eta
      \le C h(x-rv),
            \end{equation*}
             since $|\theta-r| \le 1$. For the second term, we set $\zeta=v-\eta$ and apply the same estimates to obtain
  \begin{align*}
    &\int_{\R^d}h(v-\eta) h(x-rv-(\theta-r)\eta)\dd\eta\\
    &\qquad=\int_{\R^d}h(\zeta) h(x-\theta v+(\theta-r)\zeta)\dd\zeta  \le Ch(x-\theta v).
  \end{align*}
  Substituting these bounds in \eqref{eq:cmp-two-convolutions} gives
  \begin{align*}
    (\HZK_{d+2\beta}*\nu)(x,v)  &\le \frac{C}{\rho^{d+2\beta}}  \int_0^1\int_0^1  \left(h(x-rv)+h(x-\theta v)\right) \dd r \dd \theta \le C   \HZK_{d+2\beta}(x,v).
  \end{align*} 
\end{proof}

\begin{proof}[Proof of Proposition \ref{prop:cmp-Psi-HZ}]
    Since $|x-rv| \le  |x|+|v|$ for $0 \le r \le 1$, we have
  \begin{equation} \label{eq:cmp-K-lower-envelope}
    \HZK_{d+2\beta}(x,v) \ge (1+|x|+|v|)^{-2d-4\beta}.
  \end{equation}
  Thus, for $N\ge 2d+ 4 \beta$, 
  \begin{equation*}
      \phi_N(x,v) = (1+|x|+|v|)^{-N}  \le \HZK_{d+2\beta}(x,v).
\end{equation*}
Lemma   \ref{lem:cmp-onejump} and induction gives
  \begin{equation}
    \phi_N*\nu^{*m}  \le \HZK_{d+2\beta}*\nu^{*m}
      \le C^m \HZK_{d+2\beta},   \quad \quad \text{ for } m\ge0.
  \end{equation}
  Taking the series we conclude
\begin{equation*}
 	\Psi_{N,A}
      \le \HZK_{d+2\beta} \sum_{m=0}^\infty \frac{(AC)^m}{m!}
      =e^{CA}\HZK_{d+2\beta}.
\end{equation*}
\end{proof}

\appendix
\section{Stein interpolation}

We provide the argument used in Proposition~\ref{prop:perturbed-L2-Linfty}. 

\begin{lem}[Stein interpolation for kernel operators]
  \label{lem:Stein-positive-kernel}
  Let $G$ be a linear operator induced by a nonnegative measurable kernel $K(X,Y)$ on $\R^{2d}\times\R^{2d}$. Let $2\le p<\infty$ and suppose that,
  for some $B_p>0$,
  \begin{equation}
    \|G\|_{\L^p\to\L^\infty}\le B_p.
  \end{equation}
  Let $h_s,h_t$ be real bounded functions and suppose that
  \begin{equation}
    \|e^{2h_t} G e^{-2h_s}\|_{\L^p \to \L^p}\le E_p(2h),
  \end{equation}
  where the factors on the left and on the right are the multiplication operators by the corresponding functions.
  Then
  \begin{equation}
    \label{eq:Stein-p-2p}
    \|e^{h_t} G e^{-h_s}\|_{\L^p\to \L^{2p}}
    \le B_p^{1/2} E_p(2h)^{1/2}.
  \end{equation}
\end{lem}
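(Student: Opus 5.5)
We prove \eqref{eq:Stein-p-2p} with Stein's complex interpolation applied to the analytic family $G_z=e^{zh_t}Ge^{-zh_s}$ on the strip $0\le\Re z\le 2$. At $z=1$ this family gives the operator of interest. Because $h_t,h_s$ are real and bounded, the multipliers $e^{zh_t}$ and $e^{-zh_s}$ are bounded for every $z$ in the strip, uniformly on compact sets. Hence for simple functions $\psi,\varphi$ the pairing $z\mapsto\angle{G_z\psi}{\varphi}$ is entire and of admissible (indeed bounded) growth in the strip.

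\emph{Boundary estimates.} On $\Re z=0$, write $z=iy$. The multipliers are unimodular, and $G$ maps $\L^p$ to $\L^\infty$ with bound $B_p$. Hence
\[
\|G_{iy}\|_{\L^p\to\L^\infty}\le B_p .
\]
On $\Re z=2$, write $z=2+iy$. Then $G_{2+iy}=e^{iyh_t}\,(e^{2h_t}Ge^{-2h_s})\,e^{-iyh_s}$, where the outer multipliers are again unimodular, so
\[
\|G_{2+iy}\|_{\L^p\to\L^p}\le E_p(2h).
\]

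\emph{Interpolation.} Apply Stein's theorem with the parameter $\vartheta=1/2$ (the point $z=1$). The domain exponent is $p$ on both sides. The target exponent $q$ satisfies $1/q=(1-\vartheta)/\infty+\vartheta/p=1/(2p)$, so $q=2p$. The conclusion is
\[
\|G_1\|_{\L^p\to\L^{2p}}\le B_p^{1/2}E_p(2h)^{1/2},
\]
which is the claim.

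\emph{Alternative without analytic families.} Since $K\ge0$, one can argue directly. With $a=e^{h_t(X)}$ and $b=e^{-h_s(Y)}$, Cauchy--Schwarz gives, for $\psi\ge0$,
\[
\Bigl(\int K(X,Y)\,a\,b\,\psi(Y)\dd Y\Bigr)^2\le \int K(X,Y)\psi(Y)\dd Y\cdot \int K(X,Y)\,a^2b^2\,\psi(Y)\dd Y .
\]
The first factor is at most $B_p\|\psi\|_p$ pointwise. The second factor is $e^{2h_t}Ge^{-2h_s}\psi$, whose $\L^p$ norm is at most $E_p(2h)\|\psi\|_p$. Taking $\L^p$ norms of the square yields
\[
\|e^{h_t}Ge^{-h_s}\psi\|_{2p}^2\le B_pE_p(2h)\|\psi\|_p^2 .
\]
Complex $\psi$ reduce to $|\psi|$ by positivity of the kernel. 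The main point to handle carefully in either route is justifying the kernel or analytic-family manipulations on a dense class; I would present the elementary Cauchy--Schwarz argument, since it avoids that issue.
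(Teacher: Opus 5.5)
Your proposal is correct. Your first route is the paper's argument up to reparametrisation. The paper interpolates the family $F_z=e^{2zh_t}Ge^{-2zh_s}$ on $0\le\Re z\le 1$ at $z=1/2$. This is your $G_z$ on $0\le\Re z\le 2$ at $z=1$, with the same unimodular-multiplier boundary bounds and the same exponent count $1/q=1/(2p)$.

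The route you say you would actually present is genuinely different and more elementary. Cauchy--Schwarz with respect to the measure $K(X,Y)\psi(Y)\dd Y$ gives, for $\psi\ge0$, the pointwise bound
\[
\bigl(e^{h_t}Ge^{-h_s}\psi\bigr)^2\le G\psi\cdot e^{2h_t}Ge^{-2h_s}\psi .
\]
Then $\|u^2\|_{\L^p}=\|u\|_{\L^{2p}}^2$ and H\"older ($\L^\infty$ times $\L^p$) finish the proof. This is the real-variable, pointwise form of the log-convexity in the weight exponent that Stein's theorem encodes.

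The trade-offs are as follows. Your argument uses nonnegativity of the kernel essentially. Stein's theorem would need positivity only to pass the assumed bounds to complex data, and would otherwise apply to any operator satisfying the complex boundary bounds. Since the lemma assumes a nonnegative kernel and is only applied to $\Gamma_{k,0}(t,s)$, nothing is lost here. In exchange you avoid verifying analyticity and admissible growth of the family, which the paper does via the kernel representation and the boundedness of $h_s,h_t$. Both routes rely equally on the kernel representing $G$ on the $\L^p$ data considered, which is part of the hypothesis.
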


\begin{proof}
  Positivity gives $|Gf|\le G|f|$ for complex-valued $f$, so the assumed bounds hold also for complex-valued functions.

  On the strip $0\le\Re z\le1$, consider for $f \in \L^p, g\in \L^{p'}\cap \L^1$
  \[
    (F_z f, g)=\int e^{2zh_t}Ge^{-2zh_s}f\, g .
  \]
  Since the exponential weights are bounded, the kernel representation and dominated convergence show that $z\mapsto F_z$ is an analytic family of operators on
  $0<\Re z<1$, which is continuous on $0\le\Re z\le1$ with the required bounds for Stein's interpolation theorem.
  On the boundary $\Re z=0$, multiplication by $e^{2iyh_t}$ or $e^{-2iyh_s}$ preserves absolute values. Hence
  \[
    \|F_{iy}\|_{\L^p \to \L^\infty}
    \le B_p,\qquad y\in\R.
  \]
  On the other boundary,
  \[
    F_{1+iy}
    =e^{2iyh_t}\left(e^{2h_t}Ge^{-2h_s}\right)e^{-2iyh_s},
    \qquad
    \|F_{1+iy}\|_{\L^p\to\L^p}\le E_p(2h).
  \]
  Stein interpolation at $z=1/2$, with integrability exponent $p$ for the input on both boundaries and integrability exponents $\infty$ and $p$ for the Lebesgue spaces of the output gives
  \[
    \|F_{1/2}\|_{\L^p\to\L^{2p}}
    \le B_p^{1/2} E_p(2h)^{1/2}.
  \]
  Since $F_{1/2}=e^{h_t}Ge^{-h_s}$, this is
  \eqref{eq:Stein-p-2p}.
\end{proof}

\bibliography{kke_rough}
\bibliographystyle{abbrv}

\vfill

\end{document}